\chardef\bslash=`\\ % p. 424, TeXbook
        \newcommand{\C}{\ensuremath{\mathbb{C}}}
        \newcommand{\K}{\ensuremath{\Bbbk}}
        \newcommand{\Q}{\ensuremath{\mathbb{Q}}}
        \newcommand{\Z}{\ensuremath{\mathbb{Z}}}
	\newcommand{\rbracket}{]}
        \newcommand{\Hom}{\textup{Hom}}
        \newcommand{\del}{\ensuremath{\partial}}
        \theoremstyle{plain}
                \newtheorem{proposition}{Proposition}
              \newtheorem{remark}{Remark}
	     \newtheorem{theorem}{Theorem}
             \newtheorem*{proposition*}{Proposition}
                \newtheorem*{theorem*}{Theorem}
        \theoremstyle{definition}
        \numberwithin{equation}{section}
        \newcommand{\ignore}[1]{}
        \newcommand{\mynote}[1]{}
\begin{document}

\title[Representations of $D_q(\Bbbk  [x \rbracket )$]{Representations of $D_q(\Bbbk [x \rbracket )$}
\author[V. Futorny]{Vyacheslav Futorny}
\address{%
Departamento de Matem\' atica,
Universidade de S\~ao Paulo,
Caixa Postal 66281,
S\~ao Paulo-SP, Brazil
05315-970.}

\email{futorny@ime.usp.br}
\author[U. N. Iyer]{Uma N. Iyer}
\address{
Department of Mathematics and Computer Science,
Bronx Community College,
2155 University Avenue,
Bronx, New York 10453, USA.}
\email{uma.iyer@bcc.cuny.edu \thanks{Corresponding author.}}

\date{Received on September, 2014}
%\issueinfo{VOL}{NUM}{MONTH}{YEAR}
%\doiinfo{}
\begin{abstract}
The algebra of quantum differential operators on graded algebras was introduced by V. Lunts and A. Rosenberg.   D. Jordan, T. McCune and the second author have identified
 this algebra of quantum differential operators on the polynomial algebra with coefficients in an algebraically closed field of characteristic zero.
It contains   the first Weyl algebra and the quantum Weyl algebra as its subalgebras.
In this paper we classify irreducible weight modules over the algebra of quantum differential operators on the polynomial algebra.  
Some classes of indecomposable modules are constructed in the case of positive characteristic and $q$ root of unity.
  
\end{abstract}
\maketitle
\tableofcontents
%------------------------------------------------------------------------------------
\section{Introduction}
Let $\K = \Q (q)$ be a field where $q \in \C^*$ is not a root of unity. 
Let $\K [x]$ denote the polynomial algebra over $\K$ in one variable $x$.
Then following \cite{LR}, the algebra of quantum differential operators on $\K [x]$, denoted by
$D_q(\K [x])$, for a specific bicharacter $\beta$ was constructed in \cite{IM}.  In \cite{IJ}, the algebra
$D_q(\K [x])$ was described in terms of generators and relations.

View $x$ as a linear map on $\K [x]$ via multiplication; that is, 
$x (f) = xf$ for $f\in \K [x]$. 
For $a\in \{ 1,-1 \}$ define $\del_a : \K [x] \to \K [x]$ linearly by 
$\del_a (x^n) = \left( \dfrac{q^{an}-1}{q^{a}-1} \right) x^{n-1}$ for $n\geq 1$ and
$\del_a (1)=0$. Further, define $\del_0 = \del : \K [x] \to \K [x]$ linearly by 
$\del (x^n) = nx^{n-1}$ for $n\geq 0$. 
Then, $D_q(\K [x])$ is the subalgebra of the algebra of $\K$-linear homomorphisms, $\Hom (\K [x], \K[x])$,
generated by the maps $\{x, \del_1, \del_{-1}, \del_0 \}$ (\cite{IM}). 
The defining relations (\cite{IJ}) among these maps are
\begin{align}
\del_a x - q^a x \del_a &=1,\label{Dq-relation-1}\\
\del_a x \del_b &= \del_b x \del_a,\label{Dq-relation-2}\\
\del_{-1} \del_1 &= q \del_1 \del_{-1}, \quad \textit{for } a,b \in \{ 1,-1,0 \}.\label{Dq-relation-3}
\end{align}
In \cite{IJ}, it is shown that $D_q(\K [x])$ is a left and right Noetherian simple domain of GK dimension 3 when
$q$ is transcendental over $\Q$. 

Let $\sigma : \K [x] \to \K [x]$ be the automorphism given by $\sigma (x^n) = q^n x^n$ for $n\geq 0$.
Then one can check that 
\begin{align}
\del_1 x - x\del_1 &=\sigma = (q-1)x\del_1 +1, \label{sigma-as-commutator}\\
\del_{-1} x - x\del_{-1} &= \sigma^{-1} = (q^{-1}-1) x\del_{-1} +1,\label{sigma-inverse-as-commutator}\\
\del_{-1} &= \sigma^{-1} \del_1. \label{del-minus-one} 
\end{align}
Note that $D_q ( \K [x])$ is $\Z$-graded with $deg (x)=1$, $deg (\sigma) = deg (\sigma^{-1}) = 0$ and $deg (\del) = deg (\del_1) = -1$. 
Let 
$\tau = \del x$. Then, we have $\tau \sigma = \sigma \tau$ and  
$ \K [\tau, \sigma,\sigma^{-1}]$ is the subalgebra of $D_q( \K [x])$ of degree $0$. Note that 
$\K [\tau , \sigma]$ is
indeed a polynomial algebra, and 
$ \K [\tau, \sigma,\sigma^{-1}]$ is the localization of $\K [\tau , \sigma]$ at powers of $\sigma$.
Let $\alpha : \K [\tau, \sigma,\sigma^{-1}]\to \K [\tau, \sigma,\sigma^{-1}]$ be an algebra automorphism given by $\alpha (\tau ) = \tau -1$ and
$\alpha (\sigma ) = \dfrac{\sigma}{q}$. 

By \eqref{Dq-relation-1}-\eqref{Dq-relation-3} and \eqref{sigma-as-commutator}-\eqref{del-minus-one} we may view $D_q(\K[x])$ as an algebra generated over $ \K [\tau, \sigma,\sigma^{-1}]$ by generators $\{ x, \del, \del_1 \}$ subject to the relations
\begin{align}
\del x &= \tau, \quad x \del = \tau-1 ,\\
\del_1 x&= \dfrac{q\sigma -1}{q-1}, \quad x \del_1 = \dfrac{\sigma -1}{q-1} ,\\
\del_1 (\tau -1)  &= \del \left( \dfrac{\sigma -1}{q-1} \right)  , \\
x \tau &= (\tau -1) x, \quad x \sigma = \dfrac{\sigma}{q}x, \\
\del \tau &= (\tau +1) \del, \quad \del \sigma = q\sigma \del, \\
\del_1 \tau &= (\tau +1) \del_1, \quad \del_1 \sigma = q\sigma \del_1. 
\end{align}
This motivates the following definition/notations.
%------------------------------------------------------------------------------------------------------------------
\subsection*{Definition/Notations}
%------------------------------------------------------------------------------------------------------------------
Let $\K$ be an arbitrary field, $q\in \K^*$. For variables $\tau, \sigma$, let 
$R= \K [\tau, \sigma,\sigma^{-1}]$ be the localization of the polynomial algebra $\K [ \tau,\sigma]$
with respect to the multiplicative set $\{ \sigma^i \}_{\i\in \Z_{\geq 0} }$.   Let
$\alpha: R \to R$ be the  automorphism
given by $\alpha (\tau ) = \tau -1$ and
$\alpha (\sigma ) = \dfrac{\sigma}{q}$. 
Let $D$ be the algebra generated over $R$ with variables $X,Y,Y_1$ satisfying relations
\begin{align}
YX &= \tau, \quad XY = \alpha (\tau), \\
Y_1X &= q\sigma -1, \quad XY_1 = \alpha (q\sigma -1), \\
Y_1 (\tau-1) &= Y  (\sigma -1),\\
Xr=\alpha (r)X, \quad Y r &= \alpha^{-1}(r)Y, \quad Y_1 r = \alpha^{-1}(r)Y_1,
\forall r\in R.
\end{align}
Our goal is to study weight modules of $D$ along the lines of \cite{BBF} and \cite{DGO}
using the fact that $D$ contains two Generalized Weyl Algebras (GWA), denoted by
$A_q, A_1$. 
In this paper we deal only with the problem of classifying irreducible modules when $\K$ is algebraically closed. 
We will treat the indecomposable ones in a subsequent paper. 

We show that all irreducible modules in these cases come in three groups:\\

\noindent
\textbf{Family I:} Irreducible $D$ weight modules which are extended from irreducible $A_q$-weight modules (see section \ref{DGO-indecomposables-of-Aq-char0}).

\noindent
 \textbf{Family II:} Irreducible $D$ weight modules which are extended from irreducible $A_1$-weight modules (see section \ref{DGO-indecomposables-of-A1-char0}).

\noindent
\textbf{Family III:} Irreducible
$D$-weight modules which do not arise from the families I and II. These are indecomposable as $A_q$ and as $A_1$ modules (see sections \ref{DGO-indecomposables-of-Aq-char0} and \ref{DGO-indecomposables-of-A1-char0}) by our main theorem,

\begin{theorem*}[\ref{irred-D-to-indecom}]
When $\K$ is algebraically closed, every irreducible $D$ weight module is indecomposable as an 
$A_q$-module and as an $A_1$-module.
\end{theorem*}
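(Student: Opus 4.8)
The plan is to show that any decomposition $M=M'\oplus M''$ of an irreducible $D$-weight module into two nonzero $A_q$-submodules is automatically a decomposition into $D$-submodules, which is impossible; the $A_1$-statement then follows by the symmetric argument, interchanging the roles of $Y,Y_1$ and of $\tau-1,\sigma-1$. Throughout, $\K$ is algebraically closed and $q$ is not a root of unity.

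\emph{Three reductions.} Since $R\subseteq A_q$ and $M=\bigoplus_\chi M_\chi$ is a weight module, every $A_q$-submodule is $R$-stable, hence a weight module; and because $X,Y,Y_1$ move weights by the action of $\alpha^{\pm1}$ on characters of $R$, irreducibility of $M$ forces $\textup{supp}(M)$ to lie in a single $\alpha$-orbit $\mathcal O$. Next, $D$ is generated over $R$ by $X,Y,Y_1$ while $A_q$ is generated over $R$ by $X,Y_1$; so an $A_q$-summand $M'$ is automatically stable under $R,X,Y_1$, and to conclude that $M'$ is a $D$-submodule it remains only to verify that $M'$ is $Y$-stable. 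Finally, evaluating $Y_1(\tau-1)=Y(\sigma-1)$ on a vector of weight $\chi$ yields $(\chi(\tau)-1)Y_1=(\chi(\sigma)-1)Y$ on $M_\chi$, so $Y|_{M_\chi}$ is a scalar multiple of $Y_1|_{M_\chi}$ as soon as $\chi(\sigma)\neq1$, and then $Y(M'_\chi)\subseteq M'$. Since $q$ is not a root of unity, $\mathcal O$ contains at most one weight $\chi_0$ with $\chi_0(\sigma)=1$, and unless such a $\chi_0$ occurs in $\textup{supp}(M)$ there is nothing left to prove; so assume $\chi_0=(\lambda_0,1)\in\textup{supp}(M)$, and let $\chi_0'=\chi_0\circ\alpha=(\lambda_0-1,q^{-1})$ be the weight directly below it. Everything comes down to checking $Y(M'_{\chi_0})\subseteq M'$.

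Suppose first $\lambda_0\neq1$. Then $\chi_0$ is not a break point of the generalized Weyl algebra $A_1=R(\alpha,\tau)$: the scalar by which $YX=\tau$ acts on $M_{\chi_0'}$ and the scalar by which $XY=\tau-1$ acts on $M_{\chi_0}$ both equal $\lambda_0-1\neq0$, so $X\colon M_{\chi_0'}\to M_{\chi_0}$ is a linear bijection and $Y|_{M_{\chi_0}}=(\lambda_0-1)\bigl(X|_{M_{\chi_0'}}\bigr)^{-1}$. As $M'$ and $M''$ are $X$-stable, $X$ carries $M'_{\chi_0'}$ into $M'_{\chi_0}$ and $M''_{\chi_0'}$ into $M''_{\chi_0}$; but a linear bijection $A\oplus B\to A'\oplus B'$ taking $A$ into $A'$ and $B$ into $B'$ necessarily maps $A$ onto $A'$, so $X(M'_{\chi_0'})=M'_{\chi_0}$ and therefore $Y(M'_{\chi_0})=(\lambda_0-1)M'_{\chi_0'}\subseteq M'$. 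This case is routine once the reductions are set up.

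The case $\lambda_0=1$, that is $\chi_0=(1,1)$ and $\chi_0'=(0,q^{-1})$, is where I expect the essential difficulty, and it rests on the following lemma: \emph{an irreducible $D$-weight module cannot have $M_{(1,1)}$ and $M_{(0,q^{-1})}$ both nonzero.} Granting it, $M_{\chi_0'}=0$, hence $Y(M'_{\chi_0})\subseteq M_{\chi_0'}=0\subseteq M'$ and the argument concludes as before. To prove the lemma, observe that at these two adjacent weights every relevant structure scalar vanishes: $\tau$ and $q\sigma-1$ act by $0$ on $M_{(0,q^{-1})}$, while $\tau-1$ and $\sigma-1$ act by $0$ on $M_{(1,1)}$. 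If some $v\in M_{(0,q^{-1})}$ had $Xv\neq0$, then $u:=Xv\in M_{(1,1)}$ would satisfy $Yu=YXv=0$ and $Y_1u=Y_1Xv=0$, and then $\sum_{k\ge0}\K X^ku$ --- on which $Y$ and $Y_1$ send $X^ku$ to scalar multiples of $X^{k-1}u$ and $X$ raises the index --- would be a $D$-submodule, nonzero and proper (its support omits $(0,q^{-1})$), contradicting irreducibility; hence $X$ vanishes on $M_{(0,q^{-1})}$. But then the direct sum of $M_{(0,q^{-1})}$ with all weight spaces of $\mathcal O$ lying below it is a $D$-submodule, nonzero and proper (its support omits $(1,1)$), again contradicting irreducibility. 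This establishes the lemma and completes the proof that $M$ is $A_q$-indecomposable. For $A_1$-indecomposability one repeats the argument symmetrically: now $D$ is generated over $R$ by $A_1$ together with $Y_1$, the possible obstruction sits on the unique weight $\chi_1\in\mathcal O$ with $\chi_1(\tau)=1$, one uses $Y_1X=q\sigma-1$ and $XY_1=\sigma-1$ in place of $YX$ and $XY$, and the subcases $\chi_1\neq(1,1)$ and $\chi_1=(1,1)$ are handled exactly as above, the latter again via the lemma.
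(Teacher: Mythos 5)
Your argument takes a genuinely different route from the paper's. The paper analyses where $X$ can vanish, invokes Proposition \ref{quiver} to reduce to one-dimensional weight spaces, and shows the module is swept out by a single $X$-chain, which forces indecomposability over any subalgebra containing $R$ and $X$. You instead show that an $A_q$-\emph{direct summand} is automatically $Y$-stable: the relation $Y_1(\tau-1)=Y(\sigma-1)$ expresses $Y$ through $Y_1$ on every weight space with $\sigma$-eigenvalue $\neq 1$, the invertibility of $X$ into the weight $(\lambda_0,1)$ handles $\lambda_0\neq 1$, and your lemma kills the weight $(1,1)$. Within the scope you announce this is correct, and it has the merit of never needing the weight spaces to be one-dimensional. (One small slip: in positive characteristic with $q$ not a root of unity, the weight with $\chi_1(\tau)=1$ relevant to the $A_1$ half is not unique --- there is a whole congruence class of them --- but all except at most one still have $\chi_1(\sigma)\neq 1$ and are disposed of by the bijection argument, so this is only a misstatement.)

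The genuine gap is your opening restriction ``$q$ is not a root of unity.'' The theorem, and the paper's proof of it, also cover $\mathrm{char}\,\K=p>0$ with $q$ a root of unity, where orbits are finite (circular), and there your method does not extend: the key lemma is \emph{false} on a circular orbit. When $p$ equals the order of $q$, the module $W^{Y,a}$ of Section \ref{counter-example-family} (the case $m=1$ of Theorem \ref{thm-family-pm}) is an irreducible $D$-weight module supported on the full orbit of $(\tau-1,\sigma-1)$; it has $M_{(1,1)}=\K v_0\neq 0$ and $M_{(0,q^{-1})}=\K v_{p-1}\neq 0$, with $Xv_{p-1}=0$, $Y_1v_0=0$ but $Yv_0=av_{p-1}\neq 0$, so at $(1,1)$ the operator $Y$ really is not determined by $X$ and $Y_1$. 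Both steps of your lemma's proof fail there because the forward $X$-chain through $u$ and the ``sum of the weight spaces below $(0,q^{-1})$'' wrap around the circle and need not be proper submodules. So the finite-orbit case requires a separate argument --- for instance the paper's, which first uses Proposition \ref{quiver} to get one-dimensional weight spaces and then shows $\ker X$ meets at most one weight space, so every nonzero $A_q$- or $A_1$-summand contains the terminal vector of the $X$-chain.
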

 
Theorem \ref{alg.cl.char0.Virred}, Theorem \ref{alg.cl.q-not-root.Virred}, and 
Theorem \ref{alg.cl.finite.orbit}
are combined to 
\begin{theorem*}[B]
The irreducible $D$-modules are as described in
families I, II, and III.
\end{theorem*}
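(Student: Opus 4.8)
The plan is to read Theorem~B as the assembly of the three regime-specific classifications, Theorems~\ref{alg.cl.char0.Virred}, \ref{alg.cl.q-not-root.Virred} and \ref{alg.cl.finite.orbit}, and to describe the machinery common to all of them. First I would fix an irreducible weight $D$-module $M$. Since $\K$ is algebraically closed, its weight spaces are indexed by the points of $\mathrm{Max}(R)=\{(\lambda,\mu):\lambda\in\K,\ \mu\in\K^{*}\}$, and the relations $Xr=\alpha(r)X$, $Yr=\alpha^{-1}(r)Y$, $Y_{1}r=\alpha^{-1}(r)Y_{1}$ force $X,Y,Y_{1}$ to move the weight by $\alpha^{\pm1}$, where $\alpha$ acts on $\mathrm{Max}(R)$ by $(\lambda,\mu)\mapsto(\lambda-1,\mu q^{-1})$. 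Hence the support of $M$ lies in a single $\langle\alpha\rangle$-orbit $\mathcal{O}$, and the trichotomy (i) $\mathrm{char}\,\K=0$; (ii) $\mathrm{char}\,\K=p>0$ with $q$ not a root of unity; (iii) $\mathrm{char}\,\K=p>0$ with $q$ a root of unity of order $\ell$, corresponds exactly to: $\mathcal{O}\cong\Z$ because $\tau\mapsto\tau-1$ already has infinite orbits; $\mathcal{O}\cong\Z$ because $\mu\mapsto\mu q^{-1}$ has infinite orbits; $\mathcal{O}$ finite, since then $\alpha^{\mathrm{lcm}(p,\ell)}$ fixes $R$ pointwise. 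Theorem~B is then the conjunction of the three classifications, one per regime.

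For each regime I would run the following analysis. The pairs $(X,Y)$ and $(X,Y_{1})$ generate the degree-one generalized Weyl subalgebras $A_{1}$ and $A_{q}$ of $D$, with defining elements $\tau$ and $q\sigma-1$; by the weight-module theory of GWAs (along the lines of \cite{DGO} and \cite{BBF}) the restrictions $M|_{A_{1}}$ and $M|_{A_{q}}$ are governed by the vanishing loci on $\mathcal{O}$ of $\tau$ and of $q\sigma-1$. Next I would exploit the coupling relation $Y_{1}(\tau-1)=Y(\sigma-1)$: on a weight space where $\tau$ acts by $\lambda$ and $\sigma$ by $\mu$ it reads $(\lambda-1)Y_{1}=(\mu-1)Y$, so $Y_{1}$ is a scalar multiple of $Y$ off the locus $\tau=1$, and $Y$ annihilates that weight space when $\tau=1\neq\sigma$. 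Thus on $\mathcal{O}$ the $D$-module structure is, generically, nothing more than an $A_{1}$-structure (equivalently an $A_{q}$-structure), and genuinely new behaviour can occur only at the finitely many ``special'' weights of $\mathcal{O}$ where $\tau\in\{0,1\}$ or $\sigma\in\{1,q^{-1}\}$. Cutting $\mathcal{O}$ at these special weights, irreducibility pins $M$ down to a single maximal segment, on which one enumerates all admissible choices of the operators $X,Y,Y_{1}$ compatible with the defining relations of $D$: those extending an irreducible $A_{q}$-module give Family~I, those extending an irreducible $A_{1}$-module give Family~II, and the remaining ones — indecomposable but reducible over each GWA, by Theorem~\ref{irred-D-to-indecom} — give Family~III. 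Finally I would record the complete isomorphism invariants, namely $\mathcal{O}$, the chosen segment, and the scalars describing the action of $X,Y,Y_{1}$ at its endpoints.

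I expect the only real obstacle to be regime (iii): when $q$ is a root of unity in positive characteristic, neither $A_{q}$ nor $A_{1}$ is simple, their weight modules on the finite orbit $\mathcal{O}$ have a much richer submodule lattice, and the special loci $\tau\in\{0,1\}$ and $\sigma\in\{1,q^{-1}\}$ may collide on $\mathcal{O}$; keeping exact track of which compatible pairs of $A_{1}$- and $A_{q}$-data actually glue into an irreducible $D$-module, and confirming that Family~III is precisely what is left over after Families~I and~II, is where the combinatorics becomes delicate, and this is also where the indecomposability statement of Theorem~\ref{irred-D-to-indecom} has to be invoked most carefully. Regimes (i) and (ii) should be comparatively soft: there one of the two GWAs is the first Weyl algebra (in case (i)) or a quantum Weyl algebra at a generic parameter (in case (ii)), both simple, so once $\mathcal{O}$ is identified with a copy of $\Z$ this rigidity determines the possible module structures on a short finite list.
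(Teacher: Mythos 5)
Your overall strategy is the one the paper itself follows: Theorem~B is, by construction, just the conjunction of Theorems~\ref{alg.cl.char0.Virred}, \ref{alg.cl.q-not-root.Virred} and \ref{alg.cl.finite.orbit}, each obtained by restricting an irreducible weight $D$-module to the two GWAs $A_1$ and $A_q$, running through the lists of \cite{DGO}, deciding which indecomposables admit an extension to $D$, and invoking Theorem~\ref{irred-D-to-indecom} to see nothing else can occur. Two points in your execution are genuinely off, however. First, the claim that ``genuinely new behaviour can occur only at the finitely many special weights of $\mathcal{O}$ where $\tau\in\{0,1\}$ or $\sigma\in\{1,q^{-1}\}$'' fails in exactly the mixed regimes: when $\operatorname{char}\K=0$ and $q$ is a root of unity the orbit is infinite but the $A_q$-break locus $\sigma=q^{-1}$ meets it infinitely often ($|B_\omega|=\infty$, see section~\ref{qpVJJ'}); symmetrically, when $\operatorname{char}\K=p>0$ and $q$ is not a root of unity the $A_1$-break locus $\tau=0$ recurs with period $p$ (section~\ref{VJJ'p}). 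Your ``cut at finitely many special weights and restrict to a maximal segment'' step does not go through there; what actually saves the argument is that the \emph{other} GWA still has at most one break on the orbit and the coupling $(\lambda-1)Y_1=(\mu-1)Y$ transfers its rigidity across all but one weight, so the case division must be organized around which break set is finite rather than around a single finite special locus.

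Second, and more seriously, your enumeration step silently assumes that all weight spaces of $M$ are one-dimensional. That is Proposition~\ref{quiver} (irreducibility of the restriction to the cyclic path subalgebra $Q(i,i)=\K[\sigma,\tau]$, plus algebraic closedness of $\K$), and it does work that Theorem~\ref{irred-D-to-indecom} alone does not: in the finite-orbit regime it is precisely what eliminates the string modules $V(\omega,j,w)$ and band modules $V(\omega,w,f)$ built from words longer than $m$, which are indecomposable over the GWAs and would otherwise be candidates for irreducible $D$-modules with higher-dimensional weight spaces; together with the observation (from the proof of Theorem~\ref{irred-D-to-indecom}) that $\ker X$ is at most one-dimensional, it forces $w$ to have length $m$ and degree at most one in $y$. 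Without an explicit appeal to Proposition~\ref{quiver}, ``enumerate all admissible choices of $X,Y,Y_1$ on a segment'' is not a finite, or even tame, problem, and Family~III is not pinned down. You should state and use that proposition as the first step, before restricting to the GWAs.
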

In section \ref{indecomposablefamily}, we build a family of indecomposable $D$-modules
which are decomposable as  $A_q$ and $A_1$ modules (see Theorem \ref{thm-family-pm})
when the characteristic of $\K$ is equal to the order of $q$. In this same set-up, we present,

\begin{theorem*}[\ref{thm-equidimension-p=|q|}]
Let $V = \oplus_{\mathfrak{m} \in \omega} V_{\mathfrak{m}}$ 
be a finite dimensional $D$ weight module with support contained in orbit $\omega$. 
Then $dim (V_{\mathfrak{m}}) = dim(V_{\mathfrak{n}})$ for any $\mathfrak{m}, \mathfrak{n}
\in \omega$.
\end{theorem*}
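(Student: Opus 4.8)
The plan is to compare neighbouring weight spaces using the degree $\pm 1$ generators, exploiting that in the present situation the twisting automorphism $\alpha$ has finite order, so that each $\alpha$-orbit is a cyclic graph and one can ``route around'' the single exceptional weight in it.

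First I would record the basic structure. Since $D$ is $\Z$-graded with $\deg X=1$, $\deg Y=\deg Y_1=-1$ and $R=\K[\tau,\sigma,\sigma^{-1}]$ in degree $0$, and since $\K$ is algebraically closed, a maximal ideal $\mathfrak{m}\subset R$ is recorded by the scalars $\tau(\mathfrak{m}),\sigma(\mathfrak{m})\in\K$ with $\sigma(\mathfrak{m})\neq 0$; writing $\alpha\mathfrak{m}:=\alpha(\mathfrak{m})$, the relations $Xr=\alpha(r)X$, $Yr=\alpha^{-1}(r)Y$, $Y_1r=\alpha^{-1}(r)Y_1$ give $X(V_{\mathfrak{m}})\subseteq V_{\alpha\mathfrak{m}}$ and $Y(V_{\mathfrak{m}}),\,Y_1(V_{\mathfrak{m}})\subseteq V_{\alpha^{-1}\mathfrak{m}}$. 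A short computation gives $\tau(\alpha^k\mathfrak{m})=\tau(\mathfrak{m})+k$ and $\sigma(\alpha^k\mathfrak{m})=q^k\sigma(\mathfrak{m})$. Put $p:=\mathrm{char}\,\K$, which by hypothesis equals $|q|$, so in particular $q^p=1$; then $\alpha^p=\mathrm{id}_R$ (because $\alpha^p(\tau)=\tau-p\cdot 1=\tau$ and $\alpha^p(\sigma)=q^{-p}\sigma=\sigma$), and moreover the $\alpha$-orbit of any maximal ideal has exactly $p$ elements: its size $d$ divides $p$, while $\alpha^d$ fixing $\mathfrak{m}$ forces $p\mid d$ via the $\tau$-coordinate. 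Hence $\omega=\{\mathfrak{m}_0,\alpha\mathfrak{m}_0,\dots,\alpha^{p-1}\mathfrak{m}_0\}$ and $\alpha$ acts on $\omega$ as a single $p$-cycle.

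The heart of the argument is the claim that $X\colon V_{\mathfrak{m}}\to V_{\alpha\mathfrak{m}}$ is an isomorphism whenever $\tau(\mathfrak{m})\neq 0$. On the finite-dimensional generalized weight space $V_{\mathfrak{m}}$ the element $\tau$ acts as $\tau(\mathfrak{m})\cdot\mathrm{id}$ plus a nilpotent operator (since $\tau-\tau(\mathfrak{m})\in\mathfrak{m}$), hence invertibly when $\tau(\mathfrak{m})\neq 0$; as $YX=\tau$ in $D$, the composite $V_{\mathfrak{m}}\xrightarrow{\,X\,}V_{\alpha\mathfrak{m}}\xrightarrow{\,Y\,}V_{\mathfrak{m}}$ is then invertible, so $X$ is injective on $V_{\mathfrak{m}}$. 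Likewise $XY=\alpha(\tau)=\tau-1$ acts on $V_{\alpha\mathfrak{m}}$ with eigenvalue $\tau(\alpha\mathfrak{m})-1=\tau(\mathfrak{m})$ plus a nilpotent, hence invertibly, and since $V_{\alpha\mathfrak{m}}\xrightarrow{\,Y\,}V_{\mathfrak{m}}\xrightarrow{\,X\,}V_{\alpha\mathfrak{m}}$ realizes this action, $X\colon V_{\mathfrak{m}}\to V_{\alpha\mathfrak{m}}$ is also surjective. Thus $X$ restricts to an isomorphism $V_{\mathfrak{m}}\xrightarrow{\ \sim\ }V_{\alpha\mathfrak{m}}$, so $\dim V_{\mathfrak{m}}=\dim V_{\alpha\mathfrak{m}}$. (Only $R$, $X$, $Y$ enter here — i.e.\ the sub-GWA $A_1$ — so $Y_1$ is not actually needed for this theorem.)

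To conclude: along the $p$-cycle $\omega$ the values $\tau(\alpha^k\mathfrak{m}_0)=\tau(\mathfrak{m}_0)+k$, $k=0,\dots,p-1$, are pairwise distinct (as $\mathrm{char}\,\K=p$), so at most one $\mathfrak{m}_*\in\omega$ has $\tau(\mathfrak{m}_*)=0$. By the previous paragraph $X$ is an isomorphism $V_{\mathfrak{m}}\xrightarrow{\sim}V_{\alpha\mathfrak{m}}$ for every $\mathfrak{m}\in\omega$ with $\mathfrak{m}\neq\mathfrak{m}_*$; deleting the single edge $\mathfrak{m}_*\to\alpha\mathfrak{m}_*$ from the $p$-cycle of these $X$-maps leaves a chain of isomorphisms through all $p$ weight spaces of $\omega$, whence all $\dim V_{\mathfrak{m}}$ for $\mathfrak{m}\in\omega$ coincide, as claimed. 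The one subtle point is exactly this last combinatorial step: one cannot hope that every weight-shift map is invertible, since it genuinely can degenerate at $\mathfrak{m}_*$, so it is essential that positive characteristic makes the orbit a finite cycle and lets the exceptional weight be bypassed. A characteristic-free statement, or one insisting on handling the $A_q$- and $A_1$-structures simultaneously, would be considerably more delicate; the hypothesis $\mathrm{char}\,\K=|q|$ is what makes the clean cyclic picture available.
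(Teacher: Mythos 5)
Your proof is correct, and it is leaner than the paper's. Both arguments share the same skeleton: the orbit is a $p$-cycle, $X$ induces an isomorphism $V_{\mathfrak{m}}\to V_{\alpha(\mathfrak{m})}$ on all but at most one edge, and deleting that edge leaves a spanning path, so all dimensions agree. The key lemma is genuinely different, though. The paper proves injectivity of $X$ by observing that a kernel vector is annihilated by both $YX=\tau$ and $Y_1X=q\sigma-1$, so $X$ can degenerate only at a simultaneous $A_1$- and $A_q$-break, and it proves surjectivity via $XY_1=\sigma-1$; this forces a case split on whether and where a break occurs. You instead work entirely inside $A_1$: since $\tau$ acts on $V_{\alpha^k(\mathfrak{m})}$ by the scalar $\tau(\mathfrak{m})+k$, and these $p$ scalars are pairwise distinct in characteristic $p$, at most one vertex has $\tau=0$, and at every other vertex $YX=\tau$ gives injectivity while $XY=\tau-1$ (acting by the same nonzero scalar on the target space) gives surjectivity. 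This buys two things: no case analysis on breaks, and the observation -- not made in the paper -- that the equidimensionality statement is already a theorem about $A_1$-weight modules with a circular orbit of length $p$ (or, symmetrically, about $A_q$-weight modules, using $Y_1X=q\sigma-1$, $XY_1=\sigma-1$ and the $p$ distinct values $q^k\sigma(\mathfrak{m})$); the full $D$-structure is not needed. One cosmetic remark: the paper defines $V_{\mathfrak{m}}=\{v\mid \mathfrak{m}v=0\}$, so $\tau$ acts by an exact scalar rather than ``scalar plus nilpotent'' as in your generalized-weight-space phrasing; your argument of course covers both conventions.
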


In the process, we have investigated
which indecomposable weight modules of $A_q$ and $A_1$ can be extended to 
$D$ weight modules, which is of independent interest (sections \ref{DGO-indecomposables-of-Aq-char0}
and \ref{DGO-indecomposables-of-A1-char0}). In particular, we note that in many cases, it is not possible
to extend an indecomposable $A_q$-module (or an $A_1$-module) to a $D$-module.

Assume $\overline{\K} = \K$. For this reason, every irreducible polynomial which is needed
to describe weight modules is linear.
%------------------------------------------------------------------------------------------------------
\section{Preliminaries}
%------------------------------------------------------------------------------------------------------
Let $A_q$ be the $R$-subalgebra of $D$ generated by $X,Y_1$.  Then, $A_q$ is a 
GWA:
\begin{align}
Y_1X &= q\sigma -1, \quad XY_1 = \alpha (q\sigma -1), \\
Xr&=\alpha (r)X,  \quad Y_1 r = \alpha^{-1}(r)Y_1,
\forall r\in R.
\end{align}

The algebra $D$ is then generated over $A_q$ by $Y$ with relations
\begin{align}
YX &= \tau, \quad XY = \alpha (\tau), \\
Y_1 (\tau-1) &= Y  (\sigma -1),\\
 Y r &= \alpha^{-1}(r)Y, 
\forall r\in R.
\end{align}

Analogously, let $A_1$ be the $R$-subalgebra of $D$ generated by $X,Y$.  Then $A_1$ is
also a GWA with
\begin{align}
YX &= \tau, \quad XY = \alpha (\tau -1), \\
Xr&=\alpha (r)X,  \quad Y r = \alpha^{-1}(r)Y,
\forall r\in R.
\end{align}
 $D$ is generated over $A_1$ by $Y_1$ with relations
\begin{align}
Y_1X &= (q\sigma -1), \quad XY_1 = \alpha (q\sigma -1), \\
Y_1 (\tau-1) &= Y  (\sigma -1),\\
 Y_1 r &= \alpha^{-1}(r)Y_1, 
\forall r\in R.
\end{align}
In what follows, $A$ could mean $A_q$ or $A_1$, and
 $t=q\sigma -1$ (if we study weight modules over $A_q$) 
or $t=\tau$ (if we study weight-modules over $A_1$).

Let $Max(R)$ denote the set of maximal ideals of $R$. For $\mathfrak{m} \in Max(R)$,
set $M_{\mathfrak{m}} = \{ v\in M \mid \mathfrak{m}v=0 \}$. We say that $M$ is a weight module
if $M=\sum_{\mathfrak{m}\in Max(R)} M_{\mathfrak{m}}$.  Let support of $M$ be
$Supp(M) = \{ \mathfrak{m} \in Max(R) \mid
M_{\mathfrak{m}} \neq 0 \}$. The cyclic group $<\alpha>$ generated by $\alpha$
acts on the set $Max (R)$, and let $\Omega$ denote the corresponding orbit set.
When $\omega \in \Omega$ is infinite we call $\omega$ a \textit{linear orbit}. When
$\omega$ is finite it is called a \textit{circular orbit}. 
 Set an order on $\omega$ by
$\mathfrak{m}<\alpha (\mathfrak{m})$.

For $\mathfrak{m}\in Max(R)$, let $K_{\mathfrak{m}} = R/\mathfrak{m}$ and $t_{\mathfrak{m}} = 
t+\mathfrak{m} \in K_{\mathfrak{m}}$. Call $\mathfrak{m}$ a \emph{break} if
$t_{\mathfrak{m}} =0$. Let $B$ be the set of all breaks, and $B_{\omega} = B \cap \omega$ for
$\omega \in \Omega$. A \textit{maximal break} in a linear orbit is $\mathfrak{m} \in B$ which
is maximal with respect to the order $<$ defined in the preceding paragraph. For a circular orbit, chose any
break to be the maximal break. 

Note that \cite{DGO} gives lists of indecomposable and simple weight GWA modules.
%--------------------------------------------------------------------
%--------------------------------------------------------------------
\subsection{Description of indecomposable $A$-modules.}\label{general-set-up}
%--------------------------------------------------------------------
%--------------------------------------------------------------------
For each orbit $\omega$ of $r$ elements, 
fix a maximal ideal $\mathfrak{m}(\omega) \in \omega$
which is chosen to be a maximal 
break if $\omega$ has a break.   
In what follows, $T=Y_1$ if we are studying $A_q$-modules, and $T=Y$ if 
we are studying $A_1$-modules, and $A$ denotes $A_q$ or $A_1$.

\noindent
\textbf{Case $|\omega|=\infty, B_{\omega} = \emptyset$:}
Here, $V(\omega) =  \oplus_{\mathfrak{m}\in \omega} K_{\mathfrak{m}}$ has an irreducible $A$-module structure with 
$X(v)=\alpha (t_{\mathfrak{m}}v)$ and $T(v) = \alpha^{-1}(v)$ for $v\in \K_{\mathfrak{m}}$. 
The module $V (\omega)$ is irreducible.

\noindent
\textbf{Case $|\omega|=\infty, B_{\omega} \neq \emptyset$:}
If $\mathfrak{m}$ is the maximal element in $B_{\omega}$
then set $B_{\omega}' = B_{\omega} \cup \{ \alpha (\mathfrak{m}) \}$. If $B_{\omega}$ does not have a
maximal element, then $B_{\omega}' = B_{\omega}$.
Let $J \subset B_{\omega}'$ be an interval, and $J' \subset J$
be any subset not containing the maximal element of $J$.
Let $\mathfrak{n}_0$ be the maximal break in $\omega$ preceding all elements of $J$ or $-\infty$ if it does
not exist; let $\mathfrak{n}_1$ be the maximal element of $J$ if it exists and is a break or $+\infty$ otherwise.
For each $\mathfrak{m} \in \omega$, put $V_{\mathfrak{m} } = K_{\mathfrak{m}}$ if
$\mathfrak{n}_0 < \mathfrak{m} \leq \mathfrak{n}_1$ and $V_{\mathfrak{m}} = 0$ otherwise. 
Set $V(\omega, J, J') = \oplus_{\mathfrak{m} \in \omega} V_{\mathfrak{m}}$, and for $v\in V_{\mathfrak{m}}$ set 
\[
X(v) = \begin{cases}
		\alpha (t_{\mathfrak{m}}v) &\textit{if $\mathfrak{m} \notin B$,}\\
		\alpha (v) &\textit{if $\mathfrak{m} \in J'$,}\\
		0 &\textit{otherwise};
	\end{cases} \quad
T(v) = \begin{cases}
		0 &\textit{if $\alpha^{-1}(\mathfrak{m}) \in J'\cup \{ \mathfrak{n}_0 \}$,}\\
		\alpha^{-1}(v) &\textit{otherwise}.
	\end{cases}
\]

\noindent
\textbf{Case $|\omega|<\infty, B_{\omega} = \emptyset$:}
Fix any $f\in \K^*$. 
Let $V(\omega ,f) = \oplus_{\mathfrak{m}\in \omega} \K_{\mathfrak{m}}$ and 
give it $A$-module structure by setting 
\[
X(v) = \begin{cases} 
             \alpha (t_{\mathfrak{m}} v) &\textit{ if } \mathfrak{m} \neq \mathfrak{m}(\omega),\\
             f \alpha (t_{\mathfrak{m}} v) &\textit{ if } \mathfrak{m} = \mathfrak{m}(\omega);
		\end{cases}\quad
T(v) = \begin{cases} 
             \alpha^{-1} (v) &\textit{ if } \alpha ^{-1}(\mathfrak{m} )\neq \mathfrak{m}(\omega),\\
             \frac{1}{f} \alpha^{-1}( v) &\textit{ if } \alpha^{-1}(\mathfrak{m}) = \mathfrak{m}(\omega).
		\end{cases}
\]
The module $V(\omega ,f)$ is irreducible.

\noindent
\textbf{Case $|\omega|<\infty, B_{\omega} \neq \emptyset$:} Here, we receive two families of indecomposable 
families. Suppose $|B_{\omega}| = m>0$. Then there is a one-to-one correspondence $\Z_m \to B_m$
with $\mathfrak{m}_i = \alpha^i (\mathfrak{m}_0)$ for some fixed break $\mathfrak{m}_0$. For any $\mathfrak{m} \in \omega$, let $j(\mathfrak{m})
\in \Z_m$ be the unique $j$ such that $\mathfrak{m}_{j-1} < \mathfrak{m} \leq \mathfrak{m}_j$. Let $x,y$ be
two noncommuting variables.  

\noindent
\textbf{Family 1.}
Fix $j\in \Z_m$ and $w=z_1z_2\cdots z_n$ be a word of length 
$n\geq 1$ where each $z_i \in \{ x, y\}$.  Let $e_0,e_1,\ldots, e_n$ be $n+1$ symbols. For each $\mathfrak{m}\in \omega$, let $V_{\mathfrak{m}}$ be a vector space over $\K_{\mathfrak{m}}$ with basis
$\{ (\mathfrak{m}, e_k) \mid k+j = j(\mathfrak{m}) \in \Z_m \}$. Put 
$V(\omega ,j, w) = \oplus_{\mathfrak{m} \in \omega} V_{\mathfrak{m}}$ and it has an $A$-module 
structure given by  
\[
X (\mathfrak{m},e_k) = \begin{cases} t_{\mathfrak{m}} (\alpha (\mathfrak{m}), e_k) &\textit{ if } \mathfrak{m} \notin B,\\
							(\alpha (\mathfrak{m}), e_{k+1}) &\textit{ if } \mathfrak{m} \in 
									B_{\omega} 
										\textit{ and } z_{k+1}=x,\\ 
							0 &\textit{ otherwise};  
				\end{cases}
\]
\[
T (\mathfrak{m},e_k) = \begin{cases}  (\alpha^{-1} (\mathfrak{m}), e_k) &\textit{ if } \mathfrak{m} 
										\textit{ is not a break},\\
							(\alpha^{-1} (\mathfrak{m}), e_{k-1}) &\textit{ if } \mathfrak{m} \in B_{\omega} 
										\textit{ and } z_{k}=y,\\ 
							0 &\textit{ otherwise}.  
				\end{cases}
\]
Let $\mathfrak{m}_1$ be that break for which $n+j = j(\mathfrak{m}_1) \in \Z_m$. 
Then $X(\mathfrak{m}_1, e_n)=0$. Similarly, let $\mathfrak{m}_0$ be that break
for which $j=j(\mathfrak{m}_0) \in \Z_m$. Then
$T(\mathfrak{m}_0, e_0)=0$.
The module $V(\omega ,j, w) $ is irreducible if and only if $w$ is the empty word. 

\noindent
\textbf{Family 2.}
Let $w=z_1z_2\cdots z_n$  be a word whose length $n$ is a multiple of $m$ for $z_i \in \{ x, y\}$. 
Let $f\in \K^*$. Consider $n$ elements $e_{k}$ where
$k=1,2,\ldots ,n$. For $\mathfrak{m}\in \omega$, let $V_{\mathfrak{m}}$ be a $\K_{\mathfrak{m}}$ vector
space with basis $\{ e_{k} \mid k \equiv j(\mathfrak{m}) (\textit{mod }m) \}$. The vector space 
$V(\omega ,w, f) = \oplus_{\mathfrak{m}\in \omega} V_{\mathfrak{m}}$ has an $A$-module structure given by
\[
X(\mathfrak{m},e_{k}) = \begin{cases} 
                                      t_{\mathfrak{m}}( \alpha( \mathfrak{m}), e_{k} ) 
						&\textit{ if }\mathfrak{m} \textit{ is not a break;}\\
				 ( \alpha( \mathfrak{m}), e_{k+1} ) 
						&\textit{ if }\mathfrak{m} \in B_{\omega}, k \neq n, z_{k+1} =x;\\
				f( \alpha( \mathfrak{m}), e_{1} ) 
						&\textit{ if }\mathfrak{m} \in B_{\omega}, k = n, z_{1} =x;\\
				0 &\textit{ otherwise;} 
				\end{cases}
\]
\[
T(\mathfrak{m},e_{k}) = \begin{cases} 
                                      ( \alpha ^{-1}( \mathfrak{m}), e_{k} ) 
						&\textit{ if }\alpha^{-1}(\mathfrak{m}) \textit{ is not a break;}\\
				 ( \alpha ^{-1}( \mathfrak{m}), e_{k-1} ) 
						&\textit{ if }\alpha^{-1}((\mathfrak{m} )\in B_{\omega}, k \neq 1, z_{k} =y;\\
				f( \alpha ^{-1}( \mathfrak{m}), e_{n} ) 
						&\textit{ if }\alpha^{-1}(\mathfrak{m}) \in B_{\omega}, k = 1, z_{1} =y;\\
				0 &\textit{ otherwise;} 
				\end{cases}
\]
The module $V(\omega, w,f)$ is irreducible if and only if $w =x^{m}$ or $w=y^{m}$.

%--------------------------------------------------------------------
%--------------------------------------------------------------------
\section{Indecomposable modules over $A_q$ and $A_1$.}
%--------------------------------------------------------------------
%--------------------------------------------------------------------
%--------------------------------------------------------------------
%--------------------------------------------------------------------
\subsection{The list of $A_q$-indecomposables.}\label{DGO-indecomposables-of-Aq-char0}
%--------------------------------------------------------------------
%--------------------------------------------------------------------
Here we go through the list of GWA-indecomposables as
described in \cite{DGO} and understand them as $A_q$-weight modules.

%--------------------------------------------------------------------
\subsubsection{The module $V_q(\omega ,b)$ for $b\in \K^* \setminus \{ q^i \}_{i\in \Z}$.}\label{qVomega}
%--------------------------------------------------------------------
Suppose that $|\omega|=\infty$ and $B_{\omega}= \emptyset$.
In this case,  we see $V_q(\omega, b) = \oplus_{i\in \Z} \K v_i$ where $\{ v_i \}_{i \in \Z}$ is a basis,
and the action is defined by:
\begin{align*}
&X(v_i)= (q^{i+1}b-1)v_{i+1},
&Y_1(v_i) = v_{i-1},\\
&f(\tau, \sigma) (v_i) = f (a+i, q^ib) v_i,&\textit{for } f\in \K [\tau, \sigma, \sigma^{-1}].
\end{align*}
Note in particular that $(\sigma - 1) (v_i) = (q^ib -1)v_i \neq 0$ since $b \notin \{ q^j \}_{j\in \Z}$.
Thus, in this case we can extend the action of $A_q$ on $V_q(\omega ,b)$ to an action of $D$ by setting
\[
Y (v_i) = \dfrac{(a+i-1)}{(q^ib-1)} v_{i-1} \quad \forall i \in \Z.
\]
Thus, for any $a\in \K$, we obtain an irreducible $D$-weight module, denoted by $V_q(\omega ,b,a)$.
Using appropriate scalar multiples of $v_i$, we visualize $V_q(\omega ,b,a)$ without loss of generality as follows:

\begin{center}
\begin{tikzpicture}[scale=1.5]
\coordinate [label=below:$v_{-2}$] (A) at (0,0);
\coordinate [label=below:$v_{-1}$] (B) at (2,0);
\coordinate [label=below:$v_0$] (C) at (4,0);
\coordinate [label=below:$v_1$] (D) at (6,0);
%\coordinate [label=below:$v_2$] (E) at (8,0);
\filldraw[black](A) circle (1pt);
\filldraw[black](B) circle (1pt);
\filldraw[black](C) circle (1pt);
\filldraw[black](D) circle (1pt);
%\filldraw[black](E) circle (1pt);

\draw [->] (A)  .. controls  (-0.5,0.5) and (0.5,0.5)  ..   (0.1, 0.1) ;
\node (la) at (0,0.75) {$\begin{matrix}\sigma = q^{-2}b,\\ \tau =a-2 \end{matrix}$};

\draw [->] (B)  .. controls  (1.5,0.5) and (2.5,0.5)  ..   (2.1, 0.1) ;
\node (lb) at (2,0.75) {$\begin{matrix}\sigma = q^{-1}b,\\ \tau =a-1 \end{matrix}$};

\draw [->] (C)  .. controls  (3.5,0.5) and (4.5,0.5)  ..   (4.1, 0.1) ;
\node (lc) at (4,0.75) {$\begin{matrix}\sigma = b,\\ \tau =a \end{matrix}$};

\draw [->] (D)  .. controls  (5.5,0.5) and (6.5,0.5)  ..   (6.1, 0.1) ;
\node (ld) at (6,0.75) {$\begin{matrix}\sigma = qb,\\ \tau =a+1 \end{matrix}$};

%\draw [->] (E)  .. controls  (7.5,0.5) and (8.5,0.5)  ..   (8.1, 0.1) ;
%\node (le) at (8,0.75) {$\begin{matrix}\sigma = q^2b,\\ \tau =a+2 \end{matrix}$};

\draw[->] (-1,0.2) to [distance=0.25cm](-0.2,0.1);
\draw[->] (-0.2,-0.3)to [distance=-0.25cm] (-1,-0.4) ;
\draw[->] (-0.2,-0.5)to [distance=-0.25cm] (-1,-0.6) ;
\draw[->] (0.2,0.1) to [distance=0.5cm](1.8,0.1);
\draw[->] (1.8,-0.3) to [distance=-0.5cm](0.2,-0.3);
\draw[->] (1.8,-0.5) to [distance=-0.5cm](0.2,-0.5);
\node (xa) at (1,0.2) {\small{$X=1$}};
\node (y1b) at (1,-0.4) {\small{$Y_1=\frac{b}{q}-1$}};
\node (y1b) at (1,-0.9) {\small{$Y=a-2$}};

\draw[->] (2.2,0.1) to [distance=0.5cm](3.8,0.1);
\node (xb) at (3,0.2) {\small{$X=1$}};
\draw[->] (3.8,-0.3) to [distance=-0.5cm](2.2,-0.3);
\draw[->] (3.8,-0.5) to [distance=-0.5cm](2.2,-0.5);
\node (y1b) at (3,-0.4) {\small{$Y_1=b-1$}};
\node (y1b) at (3,-0.9) {\small{$Y=a-1$}};

\draw[->] (4.2,0.1) to [distance=0.5cm](5.8,0.1);
\node (xc) at (5,0.2) {\small{$X=1$}};
\draw[->] (5.8,-0.3) to [distance=-0.5cm](4.2,-0.3);
\draw[->] (5.8,-0.5) to [distance=-0.5cm](4.2,-0.5);
\node (y1c) at (5,-0.4) {\small{$Y_1=qb-1$}};
\node (y1c) at (5,-0.9) {\small{$Y=a$}};

\draw[->] (6.2,0.1) to [distance=0.25cm](7,0.1);
%\node (xd) at (7,0.2) {\small{$X=1$}};
%\draw[->] (8.2,0.1) to [distance=0.05cm](8.8,0.1);
\draw[->] (7,-0.3) to [distance=-0.25cm](6.2,-0.3);
\draw[->] (7,-0.5) to [distance=-0.25cm](6.2,-0.5);
%\node (y1c) at (7,-0.4) {\small{$Y_1=q^2b-1$}};
%\node (y1c) at (7,-0.9) {\small{$Y=a+1$}};
%\draw[->] (8.8,-0.3) to [distance=-0.05cm](8.2,-0.3);
%\draw[->] (8.8,-0.5) to [distance=-0.05cm](8.2,-0.5);

\end{tikzpicture}

\end{center}

\begin{remark}\label{rem:Vqomega}
When characteristic of $\K$ is $0$, and $a\in \K \setminus \Z$ (respectively, $a\in \K \setminus \Z_p$
when characteristic of $\K$ is $p>0$)
then the resulting module $V_q(\omega ,b,a)$
is an irreducible $A_1$-module. If $a\in \Z$ (respectively, $a\in \Z_p$),
then the resulting module $V_q(\omega, b,a)$ is an indecomposable $A_1$-module. 
\end{remark}
%--------------------------------------------------------------------
\subsubsection{The module $V_q(\omega ,J, J')$ when $q$ is not a root of unity.}\label{qVJJ'}
%--------------------------------------------------------------------
Suppose $|\omega|= \infty$ and $B_{\omega} \neq \emptyset$.
That is, $\omega = \{ \alpha ^i (\tau-a, \sigma -b) \}_{i\in \Z}$.
Since $B=\{ (\tau -c, q\sigma -1) \mid c\in \K \}$, we 
have $B_{\omega} \neq \emptyset$ if and only if $b = q^k$ for some $k\in \Z$.
That is, without loss of generality, 
\[
\omega = \{  \mathfrak{m}_i = (\tau -a -i, \sigma - q^{i-1}) \}_{i\in \Z},
 \textit{ and }
B_{\omega} = \left\{  \mathfrak{m}_{0} = \left( \tau -a, \sigma  -\frac{1}{q} \right) \right\}.
\]
Set $B_{\omega}' = \{ \mathfrak{m}_{0}, \mathfrak{m}_{1} \}$. 
Thus, we have options which we investigate one at a time:

\noindent
(1)
  $J = B_{\omega}', J' = \{ \mathfrak{m}_{0} \}$, $\mathfrak{n}_0 =-\infty, \mathfrak{n}_1 =\infty$.
In this case, we have $V = \oplus_{i=-\infty}^{\infty} \K v_i$.  Set $\K v_i = K_{\mathfrak{m}_{i}}$ 
for $i\in \Z$. That is, $v_0 \in K_{\mathfrak{m}_{0}}$. 
Here, 
\[
X(v) = \begin{cases}
	v_1 &\textit{if }v = v_0\\
	(q^i-1)v_{i+1}&\textit{if }v = v_i
	\end{cases}
\quad \quad
Y_1(v) = \begin{cases}
	0 &\textit{if }v = v_1\\
	v_{i-1}&\textit{if }v = v_i
	\end{cases}\quad \forall i\in \Z.
\]
Further, $\sigma (v_i) = q^{i-1}v_i, \quad \tau (v_i) = (a+i)v_i \quad \forall i\in \Z$.

Using the fact that $YX = \tau$,  
we now define action of $Y$ on $V_q( \omega , J,J')$ as ,
\[
Y(v_{i+1}) = \dfrac{(a+i)}{(q^i-1)} v_i \quad \forall i\neq 0, \quad
Y(v_1)  = av_0.
\]
For every $a\in \K$, we thus have a $D$ weight module denoted by $V_q(\omega, J, J',a)$ which is
irreducible if $a\neq 0$, and indecomposable if $a=0$. Note that if $a\in \Z \setminus \{ 0 \}$ then
$V_q(\omega , J,J',a)$ is not simple as either $A_q$ module or $A_1$-module. But it is simple as a $D$-module. 
By considering suitable scalar multiples of $v_i$, without loss of generality, we visualize $V_q(\omega ,J,J',a)$ as follows:

\begin{center}
\begin{tikzpicture}[scale=1.5]
\coordinate [label=below:$v_{-2}$] (A) at (0,0);
\coordinate [label=below:$v_{-1}$] (B) at (2,0);
\coordinate [label=below:$v_0$] (C) at (4,0);
\coordinate [label=below:$v_1$] (D) at (6,0);
%\coordinate [label=below:$v_2$] (E) at (8,0);
\filldraw[black](A) circle (1pt);
\filldraw[black](B) circle (1pt);
\filldraw[black](C) circle (1pt);
\filldraw[black](D) circle (1pt);
%\filldraw[black](E) circle (1pt);

\draw [->] (A)  .. controls  (-0.5,0.5) and (0.5,0.5)  ..   (0.1, 0.1) ;
\node (la) at (0,0.75) {$\begin{matrix}\sigma = q^{-3},\\ \tau =a-2 \end{matrix}$};

\draw [->] (B)  .. controls  (1.5,0.5) and (2.5,0.5)  ..   (2.1, 0.1) ;
\node (lb) at (2,0.75) {$\begin{matrix}\sigma = q^{-2},\\ \tau =a-1 \end{matrix}$};

\draw [->] (C)  .. controls  (3.5,0.5) and (4.5,0.5)  ..   (4.1, 0.1) ;
\node (lc) at (4,0.75) {$\begin{matrix}\sigma = q^{-1},\\ \tau =a \end{matrix}$};

\draw [->] (D)  .. controls  (5.5,0.5) and (6.5,0.5)  ..   (6.1, 0.1) ;
\node (ld) at (6,0.75) {$\begin{matrix}\sigma = 1,\\ \tau =a+1 \end{matrix}$};

%\draw [->] (E)  .. controls  (7.5,0.5) and (8.5,0.5)  ..   (8.1, 0.1) ;
%\node (le) at (8,0.75) {$\begin{matrix}\sigma = q,\\ \tau =a+2 \end{matrix}$};

\draw[->] (-1,0.2) to [distance=0.25cm](-0.2,0.1);
\draw[->] (-0.2,-0.3)to [distance=-0.25cm] (-1,-0.4) ;
\draw[->] (-0.2,-0.5)to [distance=-0.25cm] (-1,-0.6) ;
\draw[->] (0.2,0.1) to [distance=0.5cm](1.8,0.1);
\draw[->] (1.8,-0.3) to [distance=-0.5cm](0.2,-0.3);
\draw[->] (1.8,-0.5) to [distance=-0.5cm](0.2,-0.5);
\node (xa) at (1,0.2) {\small{$X=1$}};
\node (y1b) at (1,-0.4) {\small{$Y_1=\frac{1}{q^2}-1$}};
\node (y1b) at (1,-0.9) {\small{$Y=a-2$}};

\draw[->] (2.2,0.1) to [distance=0.5cm](3.8,0.1);
\node (xb) at (3,0.2) {\small{$X=1$}};
\draw[->] (3.8,-0.3) to [distance=-0.5cm](2.2,-0.3);
\draw[->] (3.8,-0.5) to [distance=-0.5cm](2.2,-0.5);
\node (y1b) at (3,-0.4) {\small{$Y_1=\frac{1}{q}-1$}};
\node (y1b) at (3,-0.9) {\small{$Y=a-1$}};

\draw[->] (4.2,0.1) to [distance=0.5cm](5.8,0.1);
\node (xc) at (5,0.2) {\small{$X=1$}};
\draw[->] (5.8,-0.3) to [distance=-0.5cm](4.2,-0.3);
\draw[->] (5.8,-0.5) to [distance=-0.5cm](4.2,-0.5);
\node (y1c) at (5,-0.4) {\small{$Y_1=0$}};
\node (y1c) at (5,-0.9) {\small{$Y=a$}};

\draw[->] (6.2,0.1) to [distance=0.25cm](7,0.1);
%\node (xd) at (7,0.2) {\small{$X=1$}};
%\draw[->] (8.2,0.1) to [distance=0.05cm](8.8,0.1);
\draw[->] (7,-0.3) to [distance=-0.25cm](6.2,-0.3);
\draw[->] (7,-0.5) to [distance=-0.25cm](6.2,-0.5);
%\node (y1c) at (7,-0.4) {\small{$Y_1=q-1$}};
%\node (y1c) at (7,-0.9) {\small{$Y=a+1$}};
%\draw[->] (8.8,-0.3) to [distance=-0.05cm](8.2,-0.3);
%\draw[->] (8.8,-0.5) to [distance=-0.05cm](8.2,-0.5);

\end{tikzpicture}

\end{center}

\noindent
(2)  $J = B_{\omega}', J' = \emptyset $, $\mathfrak{n}_0 =-\infty, \mathfrak{n}_1 =\infty$.
In this case again, we have $V = \oplus_{i=-\infty}^{\infty} \K v_i$.  Set $\K v_i = K_{\mathfrak{m}_{i}}$ 
for $i\in \Z$. 
Here, 
\[
X(v_i) = (q^i-1)v_{i+1},
\quad 
Y_1(v_i) = v_{i-1},
\quad
\sigma (v_i) = q^{i-1}v_i, \quad \tau (v_i) = (a+i)v_i,  \forall i\in \Z.
\]
Using the fact that $YX = \tau$, we may define $Y$ on $V_q( \omega , J,J')$, but
requiring that $XY (v_1) = (\tau -1)(v_1)$ gives us $a = 0$.  Thus, we have the following:
\[
\tau (v_i) = iv_i; \quad 
Y(v_i) = \begin{cases}
			cv_0 &\textit{if }i =1, \textit{ for some }c\in \K,\\
			\left( \dfrac{i-1}{q^{i-1}-1} \right) v_{i-1} &\textit{if } i\neq 1.	
		\end{cases}
\]
For any fixed $c\in \K$, we have a $D$-module which is indecomposable, but not irreducible. 
Notice that if $c=0$, then $V$ is a decomposable $A_1$-module, but an 
indecomposable $A_q$-module which is reducible as an $A_1^q$-module. 

\noindent
\textbf{A generalization of the above set-up:}
For any $c,d\in \K$, set $V_q(\omega ,J, J', c,d) = \oplus_{i=-\infty}^{\infty} \K v_i$ with
Here, 
\[
X(v_i) = (q^i-1)v_{i+1};
\quad 
\sigma (v_i) = q^{i-1}v_i, \quad \tau (v_i) = iv_i \quad \forall i\in \Z.
\]
\begin{align*}
Y_1(v_i) &= \begin{cases}
			cv_0 &\textit{if }i =1, \textit{ for some }c\in \K,\\
			 v_{i-1} &\textit{if } i\neq 1.	
		\end{cases} \\
Y(v_i) &= \begin{cases}
			dv_0 &\textit{if }i =1, \textit{ for some }d\in \K,\\
			\left( \dfrac{i-1}{q^{i-1}-1} \right) v_{i-1} &\textit{if } i\neq 1.	
		\end{cases}
\end{align*}
We have a $D$-module which is indecomposable (but not irreducible) 
for $(c,d)\neq (0,0)$.  If
$c=0$ and $d\neq 0$, then we have a module which is decomposable as an $A_1^q$-module,
but indecomposable (but not simple) as an an $A_1$-module and hence indecomposable as a $D$-module. Conversely, if $c\neq 0$ and $d=0$, then we have a module which is decomposable as an $A_1$-module
but indecomposable (yet not simple) as an $A_1^q$-modue and hence indecomposable as a $D$-module. 
With a suitable change of basis, the picture is as follows:

\begin{center}
\begin{tikzpicture}[scale=1.5]
\coordinate [label=below:$v_{-2}$] (A) at (0,0);
\coordinate [label=below:$v_{-1}$] (B) at (2,0);
\coordinate [label=below:$v_0$] (C) at (4,0);
\coordinate [label=below:$v_1$] (D) at (6,0);
%\coordinate [label=below:$v_2$] (E) at (8,0);
\filldraw[black](A) circle (1pt);
\filldraw[black](B) circle (1pt);
\filldraw[black](C) circle (1pt);
\filldraw[black](D) circle (1pt);
%\filldraw[black](E) circle (1pt);

\draw [->] (A)  .. controls  (-0.5,0.5) and (0.5,0.5)  ..   (0.1, 0.1) ;
\node (la) at (0,0.75) {$\begin{matrix}\sigma = q^{-3},\\ \tau =-2 \end{matrix}$};

\draw [->] (B)  .. controls  (1.5,0.5) and (2.5,0.5)  ..   (2.1, 0.1) ;
\node (lb) at (2,0.75) {$\begin{matrix}\sigma = q^{-2},\\ \tau =-1 \end{matrix}$};

\draw [->] (C)  .. controls  (3.5,0.5) and (4.5,0.5)  ..   (4.1, 0.1) ;
\node (lc) at (4,0.75) {$\begin{matrix}\sigma = q^{-1},\\ \tau =0 \end{matrix}$};

\draw [->] (D)  .. controls  (5.5,0.5) and (6.5,0.5)  ..   (6.1, 0.1) ;
\node (ld) at (6,0.75) {$\begin{matrix}\sigma = 1,\\ \tau =1 \end{matrix}$};

%\draw [->] (E)  .. controls  (7.5,0.5) and (8.5,0.5)  ..   (8.1, 0.1) ;
%\node (le) at (8,0.75) {$\begin{matrix}\sigma = q,\\ \tau =2 \end{matrix}$};

\draw[->] (-1,0.2) to [distance=0.25cm](-0.2,0.1);
\draw[->] (-0.2,-0.3)to [distance=-0.25cm] (-1,-0.4) ;
\draw[->] (-0.2,-0.5)to [distance=-0.25cm] (-1,-0.6) ;
\draw[->] (0.2,0.1) to [distance=0.5cm](1.8,0.1);
\draw[->] (1.8,-0.3) to [distance=-0.5cm](0.2,-0.3);
\draw[->] (1.8,-0.5) to [distance=-0.5cm](0.2,-0.5);
\node (xa) at (1,0.2) {\small{$X=1$}};
\node (y1b) at (1,-0.4) {\small{$Y_1=\frac{1}{q^2}-1$}};
\node (y1b) at (1,-0.9) {\small{$Y=-2$}};

\draw[->] (2.2,0.1) to [distance=0.5cm](3.8,0.1);
\node (xb) at (3,0.2) {\small{$X=1$}};
\draw[->] (3.8,-0.3) to [distance=-0.5cm](2.2,-0.3);
\draw[->] (3.8,-0.5) to [distance=-0.5cm](2.2,-0.5);
\node (y1b) at (3,-0.4) {\small{$Y_1=\frac{1}{q}-1$}};
\node (y1b) at (3,-0.9) {\small{$Y=-1$}};

\draw[->] (4.2,0.1) to [distance=0.5cm](5.8,0.1);
\node (xc) at (5,0.2) {\small{$X=1$}};
\draw[->] (5.8,-0.3) to [distance=-0.5cm](4.2,-0.3);
\draw[->] (5.8,-0.5) to [distance=-0.5cm](4.2,-0.5);
\node (y1c) at (5,-0.4) {\small{$Y_1=c$}};
\node (y1c) at (5,-0.9) {\small{$Y=d$}};

\draw[->] (6.2,0.1) to [distance=0.25cm](7,0.1);
%\node (xd) at (7,0.2) {\small{$X=1$}};
%\draw[->] (8.2,0.1) to [distance=0.05cm](8.8,0.1);
\draw[->] (7,-0.3) to [distance=-0.25cm](6.2,-0.3);
\draw[->] (7,-0.5) to [distance=-0.25cm](6.2,-0.5);
%\node (y1c) at (7,-0.4) {\small{$Y_1=q-1$}};
%\node (y1c) at (7,-0.9) {\small{$Y=1$}};
%\draw[->] (8.8,-0.3) to [distance=-0.05cm](8.2,-0.3);
%\draw[->] (8.8,-0.5) to [distance=-0.05cm](8.2,-0.5);

\end{tikzpicture}

\end{center}

\begin{remark}\label{Aq-indec-no-D-str}
If $a \neq 0$, then for $J = B_{\omega}', J' = \emptyset $, the module
$V_q(\omega ,J, J')$ cannot be extended to a $D$-module. 
\end{remark}

\noindent
(3)  $J= \{ \mathfrak{m}_{0} \}, J'=\emptyset$, $\mathfrak{n}_0 =-\infty, \mathfrak{n}_1 
			=\mathfrak{m}_{0} $

In this case again, we have $V_q( \omega , J,J') = \oplus_{i \leq 0} \K v_i$.  
Set $\K v_i = K_{\mathfrak{m}_{i}}$ 
for $i\leq 0$. 
Here, 
\begin{align*}
X(v_i) &= \begin{cases} (q^i-1)v_{i+1} &\textit{for } i\leq -1,\\
		0 &\textit{for }i=0
		\end{cases};
&Y_1(v_i) = v_{i-1};\\
\sigma (v_i) &= q^{i-1}v_i; &\tau (v_i) = (a+i)v_i \quad \forall i\leq 0.
\end{align*}
Since $\sigma -1 \neq 0$, we can define action of $Y$ on on $V_q( \omega , J,J')$
by letting $Y = Y_1 \left( \dfrac{\tau -1}{\sigma -1} \right)$.
This gives an irreducible  $D$ weight module denoted by $V_q( \omega , J,J',a)$.
After a suitable change of basis, the picture here is

\begin{center}
\begin{tikzpicture}[scale=1.5]
\coordinate [label=below:$v_{-2}$] (A) at (0,0);
\coordinate [label=below:$v_{-1}$] (B) at (2,0);
\coordinate [label=below:$v_0$] (C) at (4,0);

\filldraw[black](A) circle (1pt);
\filldraw[black](B) circle (1pt);
\filldraw[black](C) circle (1pt);

\draw [->] (A)  .. controls  (-0.5,0.5) and (0.5,0.5)  ..   (0.1, 0.1) ;
\node (la) at (0,0.75) {$\begin{matrix}\sigma = q^{-3},\\ \tau =a-2 \end{matrix}$};

\draw [->] (B)  .. controls  (1.5,0.5) and (2.5,0.5)  ..   (2.1, 0.1) ;
\node (lb) at (2,0.75) {$\begin{matrix}\sigma = q^{-2},\\ \tau =a-1 \end{matrix}$};

\draw [->] (C)  .. controls  (3.5,0.5) and (4.5,0.5)  ..   (4.1, 0.1) ;
\node (lc) at (4,0.75) {$\begin{matrix}\sigma = q^{-1},\\ \tau =a \end{matrix}$};

\draw[->] (-1,0.2) to [distance=0.25cm](-0.2,0.1);
\draw[->] (-0.2,-0.3)to [distance=-0.25cm] (-1,-0.4) ;
\draw[->] (-0.2,-0.5)to [distance=-0.25cm] (-1,-0.6) ;
\draw[->] (0.2,0.1) to [distance=0.5cm](1.8,0.1);
\draw[->] (1.8,-0.3) to [distance=-0.5cm](0.2,-0.3);
\draw[->] (1.8,-0.5) to [distance=-0.5cm](0.2,-0.5);
\node (xa) at (1,0.2) {\small{$X=1$}};
\node (y1b) at (1,-0.4) {\small{$Y_1=\frac{1}{q^2}-1$}};
\node (y1b) at (1,-0.9) {\small{$Y=a-2$}};

\draw[->] (2.2,0.1) to [distance=0.5cm](3.8,0.1);
\node (xb) at (3,0.2) {\small{$X=1$}};
\draw[->] (3.8,-0.3) to [distance=-0.5cm](2.2,-0.3);
\draw[->] (3.8,-0.5) to [distance=-0.5cm](2.2,-0.5);
\node (y1b) at (3,-0.4) {\small{$Y_1=\frac{1}{q}-1$}};
\node (y1b) at (3,-0.9) {\small{$Y=a-1$}};

\draw[->] (4.2,0.1) to [distance=0.5cm](5.8,0.1);
\node (xc) at (5,0.2) {\small{$X=0$}};

\end{tikzpicture}

\end{center}

\begin{remark}\label{rem:VqJJ'3}
When characteristic of $\K$ is $0$, and $a\in \K \setminus \Z$ (respectively, $a\in \K \setminus \Z_p$
when characteristic of $\K$ is $p>0$)
then the resulting module $V_q(\omega ,b,a)$
is an irreducible $A_1$-module. If $a\in \Z$ (respectively, $a\in \Z_p$),
then the resulting module $V_q(\omega, b,a)$ is an indecomposable $A_1$-module. 
\end{remark}

\noindent
(4)  
$J = \{ \mathfrak{m}_{1} \}, J' = \emptyset$, $\mathfrak{n}_0 =\mathfrak{m}_{0}, \mathfrak{n}_1 			                       =\infty$
In this case  we have $V_q(\omega ,J,J') 
= \oplus_{i \geq 1} \K v_i$.  Set $\K v_i = K_{\mathfrak{m}_{i}}$ 
for $i\geq 1$. 
Here, 
\begin{align*}
X(v_i) &= (q^i-1)v_{i+1};
&Y_1(v_i) = 
 \begin{cases}  v_{i-1} &\textit{for } i\geq 2,\\
		0 &\textit{for }i=1
		\end{cases};
\\
\sigma (v_i) &= q^{i}v_i; &\tau (v_i) = (a+i)v_i \quad \forall i\geq 1.
\end{align*}
Since $\sigma -1 \neq 0$ on $v_i, i\geq 2$, we can define $Y$ on on $V_q( \omega , J,J')$
by letting $Y = Y_1 \left( \dfrac{\tau -1}{\sigma -1} \right)$ on $v_i, i\geq 2$ and $Y(v_1)=0$.
This gives an irreducible  $D$  module we denote by $V_q( \omega , J,J',a)$.
 Here the picture is:

\begin{center}
\begin{tikzpicture}[scale=1.5]
\coordinate [label=below:$v_{1}$] (A) at (0,0);
\coordinate [label=below:$v_{2}$] (B) at (2,0);
\coordinate [label=below:$v_3$] (C) at (4,0);

\filldraw[black](A) circle (1pt);
\filldraw[black](B) circle (1pt);
\filldraw[black](C) circle (1pt);

\draw [->] (A)  .. controls  (-0.5,0.5) and (0.5,0.5)  ..   (0.1, 0.1) ;
\node (la) at (0,0.75) {$\begin{matrix}\sigma = 1,\\ \tau =a+1 \end{matrix}$};

\draw [->] (B)  .. controls  (1.5,0.5) and (2.5,0.5)  ..   (2.1, 0.1) ;
\node (lb) at (2,0.75) {$\begin{matrix}\sigma = q,\\ \tau =a+2 \end{matrix}$};

\draw [->] (C)  .. controls  (3.5,0.5) and (4.5,0.5)  ..   (4.1, 0.1) ;
\node (lc) at (4,0.75) {$\begin{matrix}\sigma = q^{2},\\ \tau =a+3 \end{matrix}$};

\draw[->] (-0.2,-0.3)to [distance=-0.25cm] (-1,-0.4) ;
\draw[->] (-0.2,-0.5)to [distance=-0.25cm] (-1,-0.6) ;
\node (y1a) at (-1,-0.3) {\small{$Y_1=0$}};
\node (y1a) at (-1,-0.7) {\small{$Y=0$}};

\draw[->] (0.2,0.1) to [distance=0.5cm](1.8,0.1);
\draw[->] (1.8,-0.3) to [distance=-0.5cm](0.2,-0.3);
\draw[->] (1.8,-0.5) to [distance=-0.5cm](0.2,-0.5);
\node (xa) at (1,0.2) {\small{$X=1$}};
\node (y1b) at (1,-0.4) {\small{$Y_1=q-1$}};
\node (y1b) at (1,-0.9) {\small{$Y=a+1$}};

\draw[->] (2.2,0.1) to [distance=0.5cm](3.8,0.1);
\node (xb) at (3,0.2) {\small{$X=1$}};
\draw[->] (3.8,-0.3) to [distance=-0.5cm](2.2,-0.3);
\draw[->] (3.8,-0.5) to [distance=-0.5cm](2.2,-0.5);
\node (y1b) at (3,-0.4) {\small{$Y_1=q^2-1$}};
\node (y1b) at (3,-0.9) {\small{$Y=a+2$}};

\draw[->] (4.2,0.1) to [distance=0.5cm](5.8,0.1);
\node (xc) at (5,0.2) {\small{$X=1$}};
\draw[->] (5.8,-0.3) to [distance=-0.5cm](4.2,-0.3);
\draw[->] (5.8,-0.5) to [distance=-0.5cm](4.2,-0.5);
\node (y1c) at (5,-0.4) {\small{$Y_1=q^3-1$}};
\node (y1c) at (5,-0.9) {\small{$Y=a+3$}};

\end{tikzpicture}

\end{center}

\begin{remark}\label{rem:VqJJ'4}
When characteristic of $\K$ is $0$, and $a\in \K \setminus \Z$ (respectively, $a\in \K \setminus \Z_p$
when characteristic of $\K$ is $p>0$)
then the resulting module $V_q(\omega ,b,a)$
is an irreducible $A_1$-module. If $a\in \Z$ (respectively, $a\in \Z_p$),
then the resulting module $V_q(\omega, b,a)$ is an indecomposable $A_1$-module. 
\end{remark}

\noindent
(5)  $J=\emptyset, J' = \emptyset$, $\mathfrak{n}_0 =\mathfrak{m}_{0}, \mathfrak{n}_1 =\infty$.
The resulting module is the same as the one above in case (4).

\noindent
(6)  $J=\emptyset, J' = \emptyset$, $\mathfrak{n}_0 =-\infty, \mathfrak{n}_1 =\infty$.
The resulting module is the same as the one above in case (2).

%--------------------------------------------------------------------
\subsubsection{The module $V_q(\omega ,J, J')$ when $q$ is a root of unit of $1$.
}\label{qpVJJ'}
%--------------------------------------------------------------------
Note that since $|\omega | =\infty$ and $q$ is a root of $1$, this case arises when characteristic of
$\K$ is $0$. 
Since $D$ contains $A_1(\K)$, we can extend only those $A_q$-modules which are infinite dimensional. In other words,  we consider only those $J \subset B'_{\omega}$ for which $\mathfrak{n}_0 = -\infty$ or $\mathfrak{n}_1 = \infty$.
We also have
 $b=q^k$ for some $k\in \Z$,  $|B_{\omega}| = \infty$ and 
$B'_{\omega}=B_{\omega}$. 
Set $J' \subset J$ not containing the maximal element of $J$ and 
we consider the indecomposable $A_q$-module $V_q(\omega ,J ,J')$.  We point out that
$V_q(\omega, J,J')$ is not irreducible $A_q$-module for any choice of $J'$.

Not every  $V_q(\omega ,J ,J')$ can be extended to a $D$ module.  
Using the condition $YX = \tau$ we see the following cases for any $v\neq 0, v\in V_{\mathfrak{m}}$:

\noindent
\textbf{Case 1:}  $X (v) = c \alpha (v) \neq 0$.  Then, set $Y(\alpha (v)) = \frac{\tau (v)}{c}$. 

\noindent
\textbf{Case 2:} For $\mathfrak{m} \in J\setminus J'$, 
$X(v) =0$,
$Y_1 (\alpha (v)) = v$ and
$\tau (v) =0$.  In this case set $Y(\alpha (v)) =dv$ for any $d\in \K$.

\noindent
\textbf{Case 3:}  For $\mathfrak{m} \in J\setminus J'$, 
$X(v) =0$,
$Y_1 (\alpha (v)) = v$ and
$\tau (v) \neq 0 $. In this case this $A_q$-module cannot be extended to a $D$-module
since we require $YX (v) = \tau (v)$. In particular, since the characteristic of $\K$ is $0$,
we have that $\tau=0$ can happen for at most one weight  $\mathfrak{m}$. Thus, 
if $\mathfrak{m}_1, \mathfrak{m}_2 \in J\setminus {J}'$ are such that
$X(v_1)=X(v_2)=0$ for $v_1,v_2\neq 0, v_1\in \mathfrak{m}_1, v_2\in \mathfrak{m}_1$,
then this $A_q$-module cannot be extended to a $D$-module.

\begin{remark}\label{Aqp-indec-no-D-str}
In case there is a $D$-module structure, then that structure is indecomposable $D$-module since it was indecomposable as an $A_q$-module to begin with.  This module is irreducible as a $D$ module
if and only if $X \neq 0$ for any $\mathfrak{m} \neq
\mathfrak{n}_1$ and $Y, Y_1$ are not simultaneously zero unless $\alpha^{-1} (\mathfrak{m}) = \mathfrak{n}_0$.  
Further, if 
$X \neq 0$ for any $\mathfrak{m} \neq
\mathfrak{n}_1$ and $Y(v)=Y_1(w)=0$ for nonzero $v,w, v\neq w, v,w \notin V_{\alpha(\mathfrak{n}_0)}$ 
then we have an irreducible $D$-module which is not irreducible as an $A_1$ or an $A_q$-module.

\end{remark}

%--------------------------------------------------------------------
\subsubsection{The module $V_q(\omega ,f,b)$ for $b\notin \{ q^i \}_{i\in \Z}$.}\label{qVomegaf}
%--------------------------------------------------------------------
This family of $A_q$-decomposables arise when $|\omega|<\infty$ and $B_{\omega}=\emptyset$. 
In particular, the characteristic of $\K =p>0$, and $q$ is a root of $1$
Since $B_{\omega}=\emptyset$, we have $\omega = \alpha^i (\mathfrak{m})$ where
$\mathfrak{m} = (\tau-a, \sigma -b)$ where $a,b\in \K, b\neq 0, b\notin \{ q^i \}_{i\in \Z}$. 
Suppose $|\omega|=r$ (therefore, $q^r=1$ and $p/r$).
For any $f\in \K^*$, 
let $V_q(\omega ,f,b) = \oplus_{1\leq i\leq r} \K v_i$ be an $r$-dimensional vector space and 
give it $A_q$-module structure 
as described in section \ref{general-set-up}, with $A=A_q$, $T=Y_1$, and 
$V_q(\omega, f,b) = V(\omega, f)$
setting 
\[
\sigma (v_i) = q^{i-1}bv_i; \quad
X(v_i) = \begin{cases} 
            (q^ib-1)v_{i+1} &\textit{ if }i<r,\\
             f (b-1)v_1 &\textit{ if } i=r;
		\end{cases}
\]
\[
\tau (v_i) = (a+i-1)v_i; \quad
Y_1(v_i) = \begin{cases} 
             v_{i-1} &\textit{ if } i>1,\\
             \frac{1}{f}v_r &\textit{ if } i=1.
		\end{cases}
\]
Since action of $X$ is an isomorphism, we extend the action of $A_q$ to an action of $D$
by using $YX = \tau$ and setting 
\[
Y(v_{i+1}) = \begin{cases}
			\dfrac{a+i-1}{q^{i}b-1} v_i &\textit{ if } i >1;\\
			\dfrac{a+r-1}{f(b-1)} v_r &\textit{ if } i =1.
		\end{cases}
\]
We denote the resulting irreducible $D$-module by $V_q(\omega ,f,b, a)$.
Note that the resulting $A_1$-module is also irreducible since $X$ is invertible. 
With a change of basis we see the picture as given:

\begin{center}
\begin{tikzpicture}[scale=1.5]
\coordinate [label=below:$v_{1}$] (A) at (0,0);
\coordinate [label=below:$v_{2}$] (B) at (2,0);
\coordinate [label=below:$v_3$] (C) at (4,0);
\node (la) at (5,0) {$\ldots$};
\coordinate [label=below:$v_{r}$] (D) at (6,0);
%\coordinate [label=below:$v_{r}$] (E) at (8,0);

\filldraw[black](A) circle (1pt);
\filldraw[black](B) circle (1pt);
\filldraw[black](C) circle (1pt);
\filldraw[black](D) circle (1pt);
%\filldraw[black](E) circle (1pt);

\draw[->] (0.2,0.1) to [distance=0.5cm](1.8,0.1);
\draw[->] (1.8,-0.3) to [distance=-0.5cm](0.2,-0.3);
\draw[->] (1.8,-0.5) to [distance=-0.5cm](0.2,-0.5);
\node (xa) at (1,0.2) {\small{$X=1$}};
\node (y1b) at (1,-0.4) {\small{$Y_1=qb-1$}};
\node (yb) at (1,-0.9) {\small{$Y=a$}};

\draw[->] (2.2,0.1) to [distance=0.5cm](3.8,0.1);
\draw[->] (3.8,-0.3) to [distance=-0.5cm](2.2,-0.3);
\draw[->] (3.8,-0.5) to [distance=-0.5cm](2.2,-0.5);
\node (xb) at (3,0.2) {\small{$X=1$}};
\node (yc) at (3,-0.4) {\small{$Y_1=q^2b-1$}};
\node (y1c) at (3,-0.9) {\small{$Y=a+1$}};

%\draw[->] (6.2,0.1) to [distance=0.5cm](7.8,0.1);
%\draw[->] (7.8,-0.3) to [distance=-0.5cm](6.2,-0.3);
%\draw[->] (7.8,-0.5) to [distance=-0.5cm](6.2,-0.5);
%\node (xb) at (7,0.2) {\small{$X=1$}};
%\node (yc) at (7,-0.4) {\small{$Y_1=q^{r-1}b-1$}};
%\node (y1c) at (7,-0.9) {\small{$Y=a+r-2$}};

\draw[->](0,1)--(0,1.5)--(6,1.5)--(6,1);
\node (ly1r) at (2.5,1.75){$Y=\frac{a-1}{f}, Y_1=\frac{b-1}{f}$};

\draw[<-](0,-1)--(0,-1.5)--(6,-1.5)--(6,-1);
\node (ly1r) at (2.5,-1.75){$X=f$};

\draw [->] (A)  .. controls  (-0.5,0.5) and (0.5,0.5)  ..   (0.1, 0.1) ;
\node (la) at (0,0.75) {$\begin{matrix}\sigma = b,\\ \tau =a \end{matrix}$};

\draw [->] (B)  .. controls  (1.5,0.5) and (2.5,0.5)  ..   (2.1, 0.1) ;
\node (lb) at (2,0.75) {$\begin{matrix}\sigma = qb,\\ \tau =a+1 \end{matrix}$};

\draw [->] (C)  .. controls  (3.5,0.5) and (4.5,0.5)  ..   (4.1, 0.1) ;
\node (lc) at (4,0.75) {$\begin{matrix}\sigma = q^{2}b,\\ \tau =a+2 \end{matrix}$};

\draw [->] (D)  .. controls  (5.5,0.5) and (6.5,0.5)  ..   (6.1, 0.1) ;
\node (ld) at (6,0.75) {$\begin{matrix}\sigma = q^{r-1}b,\\ \tau =a+r-1 \end{matrix}$};

%\draw [->] (E)  .. controls  (7.5,0.5) and (8.5,0.5)  ..   (8.1, 0.1) ;
%\node (le) at (8,0.75) {$\begin{matrix}\sigma = q^{r-1}b,\\ \tau =a+r-1 \end{matrix}$};

\end{tikzpicture}

\end{center}

%--------------------------------------------------------------------
\subsubsection{The module $V_q(\omega ,j, w)$.}\label{qVomegajw}

%--------------------------------------------------------------------
This family of $A_q$-decomposables arise when $|\omega|<\infty$ and $B_{\omega}\neq \emptyset$
and this is Family 1 as described in section \ref{general-set-up}. 
Let $|B_{\omega}| = m>0$ (that is, $b \in \{ q^i \}_{i\in \Z}$). 
The one-to-one correspondence $\Z_m \to B_m$
and $j(\mathfrak{m}) \in \Z_m$ are as described in 
section \ref{general-set-up}.  Let $x,y$ be
two noncommuting variables.  

Fix $j\in \Z_m$ and $w=z_1z_2\cdots z_n$ be a word of length 
$n\geq 1$ where each $z_i \in \{ x, y\}$.  Let $e_0,e_1,\ldots, e_n$ be $n+1$ symbols. For each $\mathfrak{m}\in \omega$, let $V_{\mathfrak{m}}$ be a vector space over $\K_{\mathfrak{m}}$ with basis
$\{ (\mathfrak{m}, e_k) \mid k+j = j(\mathfrak{m}) \in \Z_m \}$. Put 
$V_q(\omega ,j, w) = \oplus_{\mathfrak{m} \in \omega} V_{\mathfrak{m}}$ and it has an $A_q$-module 
structure given by  
\[
X (\mathfrak{m},e_k) = \begin{cases} t_{\mathfrak{m}} (\alpha (\mathfrak{m}), e_k) &\textit{ if } \mathfrak{m} \notin B,\\
							(\alpha (\mathfrak{m}), e_{k+1}) &\textit{ if } \mathfrak{m} \in 
                             							B_{\omega}
										\textit{ and } z_{k+1}=x,\\ 
							0 &\textit{ otherwise};  
				\end{cases} 
\]
\[
Y_1 (\mathfrak{m},e_k) = \begin{cases}  (\alpha^{-1} (\mathfrak{m}), e_k) &\textit{ if } \mathfrak{m} 
										\notin B,\\
							(\alpha^{-1} (\mathfrak{m}), e_{k-1}) &\textit{ if } \mathfrak{m} \in B_{\omega} 
										\textit{ and } z_{k}=y,\\ 
							0 &\textit{ otherwise}.  
				\end{cases}
\]
The module $V_q(\omega ,j, w) $ is irreducible as an $A_q$-module when $w$ is the empty word. 
Not every  $V_q(\omega ,j,w)$ can be extended to a $D$ module.  
Using the condition $YX = \tau$ we see the following cases for any $v\neq 0, v\in V_{\mathfrak{m}}$:

\noindent
\textbf{Case 1:}  $X (v) = c \alpha (v) \neq 0$.  Then, set $Y(\alpha (v)) = \frac{\tau (v)}{c}$. 

\noindent
\textbf{Case 2:} For $\mathfrak{m} \in B_{\omega}$, 
$X(v) =0$,
$Y_1 (\alpha (v)) = v$ and
$\tau (v) =0$.  In this case set $Y(\alpha (v)) =dv$ for any $d\in \K$.

\noindent
\textbf{Case 3:}  For $\mathfrak{m} \in B_{\omega}$, 
$X(v) =0$,
$Y_1 (\alpha (v)) = v$ and
$\tau (v) \neq 0 $. In this case this $A_q$-module cannot be extended to a $D$-module
since we require $YX (v) = \tau (v)$.

In cases 1 and 2, we obtain an indecomposable $D$-module since it was indecomposable as an $A_q$-module to begin with.  
\begin{remark}\label{Aqomegajw-indec-no-D-str}
\begin{itemize}
\item
This module is irreducible as a $D$ module
if and only if $X \neq 0$ at any vector other than $(\mathfrak{m}_1, e_n)$ and $Y, Y_1$ are not simultaneously zero other than at $(\mathfrak{m}_0, e_0)$. 
If in addition, 
if $Y(v)=0$ and $Y_1(w)=0$ for some $v\neq w$
then we have an irreducible $D$-module which is not irreducible as an $A_1$ or an $A_q$-module. 
\item
Another point to note, as we have seen in Case 3 above, not every $A_q$-module has a $D$-module structure.
\item In Case 2 above, we could have $Y(\alpha (v)) =dw$ for some other $w$ with $X(w)=0, \tau (w)=0$
	thereby giving us yet another extension of an $A_q$-module to a $D$-module.
\end{itemize}
\end{remark}

%--------------------------------------------------------------------
\subsubsection{The module $V_q(\omega ,w,f)$.}\label{qVomegawf}
%--------------------------------------------------------------------
This family of $A_q$-decomposables arise when $|\omega|<\infty$ and $B_{\omega}\neq \emptyset$
and this is Family 2 as described in section \ref{general-set-up}.
Let $w$ be a word of length $n$ where $n$ is a multiple of $m$
and let $f\in \K^*$. Consider $n$ elements $e_{k}$ where
$k=1,2,\ldots ,n$. For $\mathfrak{m}\in \omega$, let $V_{\mathfrak{m}}$ be a $\K_{\mathfrak{m}}$ vector
space with basis $\{ e_{k} \mid k \equiv j (\textit{mod }m) \}$. The vector space 
$V_q(\omega ,w, f) = \oplus_{\mathfrak{m}\in \omega} V_{\mathfrak{m}}$ has an $A$-module structure given by
\[
X(\mathfrak{m},e_{k}) = \begin{cases} 
                                      t_{\mathfrak{m}}( \alpha( \mathfrak{m}), e_{k} ) 
						&\textit{ if }\mathfrak{m} \textit{ is not a break;}\\
				 ( \alpha( \mathfrak{m}), e_{k+1} ) 
						&\textit{ if }\mathfrak{m} \in B_{\omega}, k \neq n, z_{k+1} =x;\\
				f( \alpha( \mathfrak{m}), e_{1} ) 
						&\textit{ if }\mathfrak{m} \in B_{\omega}, k = n, z_{1} =x;\\
				0 &\textit{ otherwise;} 
				\end{cases}
\]
\[
Y_1(\mathfrak{m},e_{k}) = \begin{cases} 
                                      ( \alpha ^{-1}( \mathfrak{m}), e_{k} ) 
						&\textit{ if }\alpha^{-1}(\mathfrak{m}) \textit{ is not a break;}\\
				 ( \alpha ^{-1}( \mathfrak{m}), e_{k-1} ) 
						&\textit{ if }\alpha^{-1}((\mathfrak{m} )\in B_{\omega}, k \neq 1, z_{k} =y;\\
				f( \alpha ^{-1}( \mathfrak{m}), e_{n} ) 
						&\textit{ if }\alpha^{-1}(\mathfrak{m}) \in B, k = 1, z_{1} =y, s=1;\\
				0 &\textit{ otherwise;} 
				\end{cases}
\]
The module $V_q(\omega, w,f)$ is irreducible if $w =x^{m}$ or $w=y^{m}$. 
The extension of this action to an action of $D$ follows the same pattern as in the previous case 
 $V_q(\omega ,j, w)$, section \ref{qVomegajw}. That is, 
 
\noindent
\textbf{Case 1:}  $X (v) = c \alpha (v) \neq 0$.  Then, set $Y(\alpha (v)) = \frac{\tau (v)}{c}$. 

\noindent
\textbf{Case 2:} For $\mathfrak{m} \in B_{\omega}$, 
$X(v) =0$,
$Y_1 (\alpha (v)) = v$ and
$\tau (v) =0$.  In this case set $Y(\alpha (v)) =dv$ for any $d\in \K$.

\noindent
\textbf{Case 3:}  For $\mathfrak{m} \in B_{\omega}$, 
$X(v) =0$,
$Y_1 (\alpha (v)) = v$ and
$\tau (v) \neq 0 $. In this case this $A_q$-module cannot be extended to a $D$-module
since we require $YX (v) = \tau (v)$. The extended $D$-module is also indecomposable. 

\begin{remark}\label{Aqomegajw-indec-no-D-str}
\begin{itemize}
\item
This module is irreducible as a $D$ module
if and only if $X =0 $ at at most one basis vector and $Y, Y_1$ 
are not simultaneously zero. 
\item
Not every $A_q$-module has a $D$-module structure.
\item In Case 2 above, we could have $Y(\alpha (v)) =dw$ for some other $w$ with $X(w)=0, \tau (w)=0$
	thereby giving us yet another extension of an $A_q$-module to a $D$-module.
\end{itemize}
\end{remark}
%--------------------------------------------------------------------
%-------------------------------------------------------------------
%--------------------------------------------------------------------
%--------------------------------------------------------------------
%-------------------------------------------------------------------
%--------------------------------------------------------------------
%--------------------------------------------------------------------
\subsection{The list of $A_1$-indecomposables.} \label{DGO-indecomposables-of-A1-char0}
%--------------------------------------------------------------------
%--------------------------------------------------------------------

Now we go through the list of GWA-indecomposables as
described in \cite{DGO} and understand them as $A_1$-weight modules.

%--------------------------------------------------------------------
\subsubsection{The module $V_1(\omega, a)$}\label{Vomega}
%--------------------------------------------------------------------
Heret $|\omega|=\infty$ and $B_{\omega}= \emptyset$. Hence, $a\in \K \setminus \Z$ if the characteristic
of $\K$ is $0$, and $a\in \K \setminus \Z_p$ if the characteristic of $\K$ is $p>0$.
Here, $V_1(\omega, a) = \oplus_{i\in \Z} \K v_i$ where $\{ v_i \}_{i \in \Z}$ is a basis,
and the action is given by
\[
X(v_i)= (a+i)v_{i+1},\quad
Y(v_i) = v_{i-1},\quad
f(\tau, \sigma) (v_i) = f (a+i, q^ib) v_i, \quad \textit{for } f\in \K [\tau, \sigma, \sigma^{-1}].
\]
For any $b\in \K^*$,  we may extend the action of $A_1$ on $V_1(\omega,a)$ to an action of $D$ by setting
$
Y_1 (v_i) = \dfrac{(q^ib-1)}{(a+i-1)} v_{i-1}.
$
The resulting module is denoted $V_1(\omega,a,b)$ and is an irreducible $D$-module. 
By a suitable change of basis the picture for $V_1(\omega ,a,b)$ is identical to the picture of
$V_q(\omega ,b,a)$ drawn  in  case \ref{qVomega}.

%--------------------------------------------------------------------
\subsubsection{The module $V_1(\omega ,J, J')$ when characteristic of $\K$ is $0$}\label{VJJ'}
%--------------------------------------------------------------------
Suppose that $|\omega|= \infty$ and $B_{\omega} \neq \emptyset$.
That is, $\omega = \{ \alpha ^i (\tau-a, \sigma -b) \}_{i\in \Z}$.
Since $B=\{ (\tau , \sigma -c) \mid c\in \K \}$, we 
have $B_{\omega} \neq \emptyset$ if and only if $a\in \Z$. Without loss of generality,
$a =0$. 
That is, 
$
\omega = \{  \mathfrak{m}_i = (\tau  -i, \sigma - q^{i}b) \}_{i\in \Z}$,
  and 
$B_{\omega} = \left\{  \mathfrak{m}_{0} = \left( \tau , \sigma  - b \right) \right\}$.
Set $B_{\omega}' = \{ \mathfrak{m}_{0}, \mathfrak{m}_{1} \}$.  Thus, 
we have the following options:

\noindent
(1)
  $J = B_{\omega}', J' = \{ \mathfrak{m}_{0} \}$, $\mathfrak{n}_0 =-\infty, \mathfrak{n}_1 =\infty$

In this case, we have $V_1( \omega , J,J') = \oplus_{i=-\infty}^{\infty} \K v_i$.  Set $\K v_i = K_{\mathfrak{m}_{i}}$ 
for $i\in \Z$. 
\[
X(v_i) = \begin{cases}
	v_1 &\textit{if } i = 0\\
	 i v_{i+1}&\textit{if }i \neq 0;
	\end{cases}
\quad 
Y(v_i) = \begin{cases}
	0 &\textit{if }i = 1\\
	v_{i-1}&\textit{if } i\neq 1
	\end{cases};
\]
\[
\sigma (v_i) = q^{i}bv_i, \quad \tau (v_i) = iv_i \quad \forall i\in \Z.
\]
Using the fact that $Y_1X = q\sigma -1$, for a fixed $b\in \K^*$ we define an
action of $Y_1$ on $V_1( \omega , J,J')$ as follows:
$
Y_1(v_{i+1}) = \begin{cases} \dfrac{(q^{1+i}b-1)}{i} v_i & \forall i\neq 0\\
				 	 (qb -1)v_0.  &i=0.
		\end{cases}
$
The resulting $D$ module, denoted by $V_1( \omega , J,J',b)$, is irreducible if $b\neq \frac{1}{q}$. 
 If $b = q^i$ for some $i\in \Z, i \neq -1$,
then this module is irreducible as a $D$ module even though it is not irreducible as an $A_1$ module or an $A_q$ module.
The picture for this module is identical to the one drawn in subsection \ref{qVomega} for $a=0$.

\noindent
(2)  $J = B_{\omega}', J' = \emptyset $, $\mathfrak{n}_0 =-\infty, \mathfrak{n}_1 =\infty$.

In this case again, we have $V_1(\omega,J,J') = \oplus_{i=-\infty}^{\infty} \K v_i$.  Set $\K v_i = K_{\mathfrak{m}_{i}}$ 
for $i\in \Z$. 
Here, 
\[
X(v_i) = iv_{i+1};
\quad 
Y(v_i) = v_{i-1};
\quad
\sigma (v_i) = q^{i}bv_i, \quad \tau (v_i) = iv_i \quad \forall i\in \Z.
\]
Using the fact that $Y_1X = q\sigma -1$, 
we now define action of $Y_1$ on $V( \omega , J,J')$ by
\[
Y_1(v_i) = \begin{cases}
			cv_0 &\textit{if }i =1, \textit{ for some }c\in \K,\\
			\left( \dfrac{q^{i}b -1}{i-1} \right) v_{i-1} &\textit{if } i\neq 1.	
		\end{cases}
\]
Requiring that $XY_1 (v_1) = (\sigma -1)(v_1)$ gives us $b = q^{-1}$.
Thus, we have the following:
\[
\sigma (v_i) = q^{i-1}v_i; \quad 
Y_1(v_i) = \begin{cases}
			cv_0 &\textit{if }i =1, \textit{ for some }c\in \K,\\
			\left( \frac{q^{i-1}-1}{i-1} \right) v_{i-1} &\textit{if } i\neq 1.	
		\end{cases}
\]
For any fixed $c\in \K$, we have a $D$-module denoted by $V_1(\omega ,J,J',c)$, 
which is indecomposable, but not irreducible. 
\begin{remark}\label{A1-indec-no-D-str}
 If  $b\neq q^{-1}$, then $V_1(\omega ,J,J')$ does not have a $D$-module structure. 
\end{remark}

We may generalize this construction to the following $D$-module denoted by $V_1(\omega ,J,J',c,d)$
for any $c,d\in \K$. 
Set $V_1(\omega ,J,J',c,d) = \oplus_{i=-\infty}^{\infty} \K v_i$.  
Here, 
\[
X(v_i) = i v_{i+1};
\quad 
\sigma (v_i) = q^{i-1}v_i; \quad \tau (v_i) = iv_i \quad \forall i\in \Z.
\]
\[
Y(v_i) = \begin{cases}
			cv_0 &\textit{if }i =1, \textit{ for some }c\in \K,\\
			 v_{i-1} &\textit{if } i\neq 1;	
		\end{cases}
\]
\[
Y_1(v_i) = \begin{cases}
			dv_0 &\textit{if }i =1, \textit{ for some }d\in \K,\\
			\left( \dfrac{q^{i-1}-1}{i-1} \right) v_{i-1} &\textit{if } i\neq 1.	
		\end{cases}
\]
The $D$-module $V_1(\omega ,J,J',c,d)$ is indecomposable. Its picture is identical to that of 
\ref{qVJJ'}, case (2).

\noindent
(3)  $J= \{ \mathfrak{m}_{0} \}, J'=\emptyset$, $\mathfrak{n}_0 =-\infty, \mathfrak{n}_1 
			=\mathfrak{m}_{0} $

In this case again, we have $V_1(\omega,J,J') = \oplus_{i \leq 0} \K v_i$.  
Set $\K v_i = K_{\mathfrak{m}_{i}}$ 
for $i\leq 0$. 
Here, $Y(v_i) = v_{i-1};
\quad
\sigma (v_i) = q^{i}bv_i, \quad \tau (v_i) = iv_i$ and  
\[
X(v_i) = \begin{cases} iv_{i+1} &\textit{for } i\leq -1\\
		0 &\textit{for }i=0
		\end{cases}  \quad \forall i\leq 0.
\]
Since $\tau -1 \neq 0$, we can define $Y_1$ on on $V_1( \omega , J,J')$
by letting $Y_1 = Y \left( \dfrac{\sigma -1}{\tau -1} \right)$.
This gives a module structure of $D$ on on $V_1( \omega , J,J')$, and it is irreducible.
The picture here is identical to that of \ref{qVJJ'}, case (3).

\noindent
(4)  
$J = \{ \mathfrak{m}_{1} \}, J' = \emptyset$, $\mathfrak{n}_0 =\mathfrak{m}_{0}, \mathfrak{n}_1 			                       =\infty$
In this case  we have $V_1( \omega , J,J') = \oplus_{i \geq 1} \K v_i$.  Set $\K v_i = K_{\mathfrak{m}_{i}}$ 
for $i\geq 1$. 
Here,  $\sigma (v_i) = q^{i-1}bv_i, \quad \tau (v_i) = iv_i $ and 
\[
X(v_i) =iv_{i+1}
\quad
Y(v_i) = 
 \begin{cases}  v_{i-1} &\textit{for } i\geq 2,\\
		0 &\textit{for }i=1
		\end{cases};
\quad \forall i\geq 1.
\]
Since $\tau -1 \neq 0$ on $v_i, i\geq 2$, we can define $Y_1$ on on $V_1( \omega , J,J')$
by letting $Y_1 = Y \left( \dfrac{\sigma -1}{\tau -1} \right)$ on $v_i, i\geq 2$ and $Y_1(v_1)=0$.
This gives a module structure of $D$ on on $V_1( \omega , J,J')$, and it is irreducible.
 Here the picture is identical to that of \ref{qVJJ'}, case (4).

\noindent
(5)  $J=\emptyset, J' = \emptyset$, $\mathfrak{n}_0 =\mathfrak{m}_{0}, \mathfrak{n}_1 =\infty$.
The resulting module is the same as the one above in case (4).

\noindent
(6)  $J=\emptyset, J' = \emptyset$, $\mathfrak{n}_0 =-\infty, \mathfrak{n}_1 =\infty$.
The resulting module is the same as the one above in case (2).

%--------------------------------------------------------------------
\subsubsection{The module $V_1(\omega ,J, J')$ when characteristic of $\K$ is nonzero}\label{VJJ'p}
%--------------------------------------------------------------------
Note that since $|\omega | =\infty$ and characteristic of $\K$ is $p>0$, 
this case arises when $q$ is not a root of $1$.
Since $D$ contains $A_1^q(\K)$, we can extend only those $A_1$-modules which are infinite dimensional. Thus, we consider only those $J \subset B'_{\omega}$ for which $\mathfrak{n}_0 = -\infty$ or $\mathfrak{n}_1 = \infty$.
We also have
 $a\in \Z_p$,  $|B_{\omega}| = \infty$ and 
$B'_{\omega}=B_{\omega}$. 
Set $J' \subset J$ not containing the maximal element of $J$ and 
we consider the indecomposable $A_1$-module $V_1(\omega ,J ,J')$.  We point out that
$V_1(\omega, J,J')$ is not irreducible $A_1$-module for any choice of $J'$.

Not every  $V_q(\omega ,J ,J')$ can be extended to a $D$ module.  
Using the condition $YX = \tau$ we see the following cases for any $v\neq 0, v\in V_{\mathfrak{m}}$:

\noindent
\textbf{Case 1:}  $X (v) = c \alpha (v) \neq 0$.  Then, set $Y_1(\alpha (v)) = \frac{(q\sigma -1) (v)}{c}$. 

\noindent
\textbf{Case 2:} For $\mathfrak{m} \in J\setminus J'$, 
$X(v) =0$,
$Y (\alpha (v)) = v$ and
$q\sigma (v) =v$.  In this case set $Y_1(\alpha (v)) =dv$ for any $d\in \K$.

\noindent
\textbf{Case 3:}  For $\mathfrak{m} \in J\setminus J'$, 
$X(v) =0$,
$Y (\alpha (v)) = v$ and
$q\sigma (v) \neq v $. In this case this $A_1$-module cannot be extended to a $D$-module
since we require $Y_1X (v) = (q\sigma -1) (v)$.
In particular, since $q$ is not a root of $1$,
we have that $q\sigma =1$ can happen for at most one weight  $\mathfrak{m}$. Thus, 
if $\mathfrak{m}_1, \mathfrak{m}_2 \in J\setminus {J}'$ are such that
$X(v_1)=X(v_2)=0$ for $v_1,v_2\neq 0, v_1\in \mathfrak{m}_1, v_2\in \mathfrak{m}_1$,
then this $A_1$-module cannot be extended to a $D$-module.

\begin{remark}\label{A1p-indec-no-D-str}
In case there is a $D$-module structure, then that structure is indecomposable $D$-module since it was indecomposable as an $A_1$-module to begin with.  This module is irreducible as a $D$ module
if and only if $X \neq 0$ for any $\mathfrak{m} \neq
\mathfrak{n}_1$ and $Y, Y_1$ are not simultaneously zero unless $\alpha^{-1} (\mathfrak{m}) = \mathfrak{n}_0$.  
Further, if 
$X \neq 0$ for any $\mathfrak{m} \neq
\mathfrak{n}_1$ and $Y(v)=Y_1(w)=0$ for nonzero $v,w, v\neq w, v,w \notin V_{\alpha(\mathfrak{n}_0)}$ 
then we have an irreducible $D$-module which is not irreducible as an $A_1$ or an $A_q$-module.

\end{remark}

%--------------------------------------------------------------------
\subsubsection{The module $V_1(\omega ,f,a)$ for $a\notin \Z_p$.}\label{1Vomegaf}
%--------------------------------------------------------------------
This family of $A_q$-decomposables arise when $|\omega|<\infty$ and $B_{\omega}=\emptyset$. 
In particular, the characteristic of $\K =p>0$, and $q$ is a root of $1$
Since $B_{\omega}=\emptyset$, we have $\omega = \alpha^i (\mathfrak{m})$ where
$\mathfrak{m} = (\tau-a, \sigma -b)$ where $a,b\in \K, b\neq 0, a\notin \Z_p$. 
Suppose $|\omega|=r$ (therefore, $q^r=1$ and $p/r$).
For any $f\in \K^*$, 
let $V_1(\omega ,f,a) = \oplus_{1\leq i\leq r} \K v_i$ be an $r$-dimensional vector space and 
give it $A_1$-module structure 
as described in section \ref{general-set-up}, with $A=A_1$, $T=Y$, and 
$V(\omega, f,a) = V(\omega, f)$
setting 
\[
\begin{matrix}
\sigma (v_i) = q^{i-1}bv_i;\\
\tau (v_i) = (a+i-1)v_i;
\end{matrix}
\quad
X(v_i) = \begin{cases} 
            (a+i-1)v_{i+1} &\textit{ if }i<r,\\
             f (a-1)v_1 &\textit{ if } i=r;
		\end{cases} 
\]
\[
Y(v_i) = \begin{cases} 
             v_{i-1} &\textit{ if } i>1,\\
             \frac{1}{f}v_r &\textit{ if } i=1.
		\end{cases}
\]
Since action of $X$ is an isomorphism, we extend the action of $A_1$ to an action of $D$
by using $Y_1X = q\sigma -1$ and setting 
\[
Y_1(v_{i+1}) = \begin{cases}
			\dfrac{q^{i}b-1}{a+i-1} v_i &\textit{ if } i >1;\\
			\dfrac{b-1}{f(a-1)} v_r &\textit{ if } i =1.
		\end{cases}
\]
We denote the resulting irreducible $D$-module by $V_1(\omega ,f,a, b)$.
With a change of basis we see the picture as given in section \ref{qVomegaf}

%--------------------------------------------------------------------
\subsubsection{The module $V_1(\omega ,j, w)$.}\label{1Vomegajw}
%--------------------------------------------------------------------
This family of $A_1$-decomposables arise when $|\omega|<\infty$ and $B_{\omega}\neq \emptyset$
and this is Family 1 as described in section \ref{general-set-up}. 
Let $|B_{\omega}| = m>0$ (that is, $a \in  \Z_p$). 
The one-to-one correspondence $\Z_m \to B_m$
and $j(\mathfrak{m}) \in \Z_m$ are as described in 
section \ref{general-set-up}.  Let $x,y$ be
two noncommuting variables.  

Fix $j\in \Z_m$ and $w=z_1z_2\cdots z_n$ be a word of length 
$n\geq 1$ where each $z_i \in \{ x, y\}$.  Let $e_0,e_1,\ldots, e_n$ be $n+1$ symbols. For each $\mathfrak{m}\in \omega$, let $V_{\mathfrak{m}}$ be a vector space over $\K_{\mathfrak{m}}$ with basis
$\{ (\mathfrak{m}, e_k) \mid k+j = j(\mathfrak{m}) \in \Z_m \}$. Put 
$V_1(\omega ,j, w) = \oplus_{\mathfrak{m} \in \omega} V_{\mathfrak{m}}$ and it has an $A_1$-module 
structure given by  
\[
X (\mathfrak{m},e_k) = \begin{cases} t_{\mathfrak{m}} (\alpha (\mathfrak{m}), e_k) &\textit{ if } \mathfrak{m} \notin B,\\
							(\alpha (\mathfrak{m}), e_{k+1}) &\textit{ if } \mathfrak{m} \in 
                             							B_{\omega}
										\textit{ and } z_{k+1}=x,\\ 
							0 &\textit{ otherwise};  
				\end{cases}
\]
\[
Y (\mathfrak{m},e_k) = \begin{cases}  (\alpha^{-1} (\mathfrak{m}), e_k) &\textit{ if } \mathfrak{m} 
										\notin B,\\
							(\alpha^{-1} (\mathfrak{m}), e_{k-1}) &\textit{ if } \mathfrak{m} \in B_{\omega} 
										\textit{ and } z_{k}=y,\\ 
							0 &\textit{ otherwise}.  
				\end{cases}
\]
The module $V_1(\omega ,j, w) $ is irreducible as an $A_1$-module when $w$ is the empty word. 
Not every  $V_1(\omega ,j,w)$ can be extended to a $D$ module.  
Using the condition $Y_1X = q\sigma -1$ we see the following cases for any $v\neq 0, v\in V_{\mathfrak{m}}$:

\noindent
\textbf{Case 1:}  $X (v) = c \alpha (v) \neq 0$.  Then, set $Y_1(\alpha (v)) = \frac{(q\sigma -1) (v)}{c}$. 

\noindent
\textbf{Case 2:} For $\mathfrak{m} \in B_{\omega}$, 
$X(v) =0$,
$Y (\alpha (v)) = v$ and
$q\sigma (v) =v$.  In this case set $Y_1(\alpha (v)) =dv$ for any $d\in \K$.

\noindent
\textbf{Case 3:}  For $\mathfrak{m} \in B_{\omega}$, 
$X(v) =0$,
$Y (\alpha (v)) = v$ and
$q\sigma (v) \neq v $. In this case this $A_1$-module cannot be extended to a $D$-module
since we require $Y_1X (v) = (q\sigma -1)(v)$.

In cases 1 and 2, we obtain an indecomposable $D$-module since it was indecomposable as an $A_1$-module to begin with.  
\begin{remark}\label{Aqomegajw-indec-no-D-str}
\begin{itemize}
\item
This module is irreducible as a $D$ module
if and only if $X \neq 0$ at any vector other than $(\mathfrak{m}_1, e_n)$ and $Y, Y_1$ are not simultaneously zero other than at $(\mathfrak{m}_0, e_0)$. 
If in addition, 
if $Y(v)=0$ and $Y_1(w)=0$ for some $v\neq w$
then we have an irreducible $D$-module which is not irreducible as an $A_1$ or an $A_q$-module. 
\item
Another point to note, as we have seen in Case 3 above, not every $A_1$-module has a $D$-module structure.
\item In Case 2 above, we could have $Y_1(\alpha (v)) =dw$ for some other $w$ with $X(w)=0, q\sigma (w)=w$
	thereby giving us yet another extension of an $A_1$-module to a $D$-module.
\end{itemize}
\end{remark}

%--------------------------------------------------------------------
\subsubsection{The module $V_1(\omega ,w,f)$.}\label{1Vomegawf}
%--------------------------------------------------------------------
This family of $A_1$-decomposables arise when $|\omega|<\infty$ and $B_{\omega}\neq \emptyset$
and this is Family 2 as described in section \ref{general-set-up}.
Let $w$ be a word of length $n$ where $n$ is a multiple of $m$
and let $f\in \K^*$. Consider $n$ elements $e_{k}$ where
$k=1,2,\ldots ,n$. For $\mathfrak{m}\in \omega$, let $V_{\mathfrak{m}}$ be a $\K_{\mathfrak{m}}$ vector
space with basis $\{ e_{k} \mid k \equiv j (\textit{mod }m) \}$. The vector space 
$V_1(\omega ,w, f) = \oplus_{\mathfrak{m}\in \omega} V_{\mathfrak{m}}$ has an $A$-module structure given by
\[
X(\mathfrak{m},e_{k}) = \begin{cases} 
                                      t_{\mathfrak{m}}( \alpha( \mathfrak{m}), e_{k} ) 
						&\textit{ if }\mathfrak{m} \textit{ is not a break;}\\
				 ( \alpha( \mathfrak{m}), e_{k+1} ) 
						&\textit{ if }\mathfrak{m} \in B_{\omega}, k \neq n, z_{k+1} =x;\\
				f( \alpha( \mathfrak{m}), e_{1} ) 
						&\textit{ if }\mathfrak{m} \in B, k = n, z_{1} =x;\\
				0 &\textit{ otherwise;} 
				\end{cases}
\]
\[
Y(\mathfrak{m},e_{k}) = \begin{cases} 
                                      ( \alpha ^{-1}( \mathfrak{m}), e_{k} ) 
						&\textit{ if }\alpha^{-1}(\mathfrak{m}) \textit{ is not a break;}\\
				 ( \alpha ^{-1}( \mathfrak{m}), e_{k-1} ) 
						&\textit{ if }\alpha^{-1}((\mathfrak{m} )\in B, k \neq 1, z_{k} =y;\\
				f( \alpha( \mathfrak{m}), e_{n} ) 
						&\textit{ if }\alpha^{-1}(\mathfrak{m}) \in B, k = 1, z_{1} =y;\\
				0 &\textit{ otherwise;} 
				\end{cases}
\]
The module $V_1(\omega, w,f)$ is irreducible if $w =x^{m}$ or $w=y^{m}$. 
The extension of this action to an action of $D$ follows the same pattern as in the previous case 
 $V_1(\omega ,j, w)$, section \ref{1Vomegajw}. That is,

\noindent
\textbf{Case 1:}  $X (v) = c \alpha (v) \neq 0$.  Then, set $Y_1(\alpha (v)) = \frac{(q\sigma -1) (v)}{c}$. 

\noindent
\textbf{Case 2:} For $\mathfrak{m} \in B_{\omega}$, 
$X(v) =0$,
$Y (\alpha (v)) = v$ and
$q\sigma (v) =v$.  In this case set $Y_1(\alpha (v)) =dv$ for any $d\in \K$.

\noindent
\textbf{Case 3:}  For $\mathfrak{m} \in B_{\omega}$, 
$X(v) =0$,
$Y (\alpha (v)) = v$ and
$q\sigma (v) \neq v $. In this case this $A_1$-module cannot be extended to a $D$-module
since we require $Y_1X (v) = (q\sigma -1)(v)$.

When one can extend to a $D$-structure, the extended $D$-module is also indecomposable. 

\begin{remark}\label{A1omegajw-indec-no-D-str}
\begin{itemize}
\item
This module is irreducible as a $D$ module
if and only if $X =0 $ at at most one basis vector and $Y, Y_1$ 
are not simultaneously zero. 
\item
Not every $A_1$-module has a $D$-module structure.
\item In Case 2 above, we could have $Y_1(\alpha (v)) =dw$ for some other $w$ with $X(w)=0, q\sigma (w)=w$
	thereby giving us yet another extension of an $A_1$-module to a $D$-module.
\end{itemize}
\end{remark}

%--------------------------------------------------------------------
%--------------------------------------------------------------------
\section{Irreducible $D$-modules}\label{sect-D-irreducible}
%-------------------------------------------------------------------
%--------------------------------------------------------------------

\begin{proposition}\label{quiver}
All nonzero weight-spaces of an irreducible weight $D$-module are $1$-dimensional.
\end{proposition}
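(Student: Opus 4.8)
Since $Xr=\alpha(r)X$, $Yr=\alpha^{-1}(r)Y$ and $Y_1r=\alpha^{-1}(r)Y_1$, the operator $X$ sends $M_{\mathfrak m}$ into $M_{\alpha(\mathfrak m)}$ while both $Y$ and $Y_1$ send $M_{\mathfrak m}$ into $M_{\alpha^{-1}(\mathfrak m)}$, and $R$ preserves every $M_{\mathfrak m}$; hence the $D$-submodule generated by one nonzero weight vector has support in a single $\langle\alpha\rangle$-orbit, and by irreducibility $\mathrm{Supp}(M)\subseteq\omega$ for one orbit $\omega$. Fix $\mathfrak m\in\omega$ with $M_{\mathfrak m}\ne0$ and $0\ne v\in M_{\mathfrak m}$; then $M=Dv$, so $M_{\mathfrak m}=(Dv)_{\mathfrak m}$ and it suffices to show $M_{\mathfrak m}\subseteq\K v$. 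The defining relations are homogeneous for the grading $\deg X=1=-\deg Y=-\deg Y_1$, $\deg R=0$, so $D=\bigoplus_nD_n$; using $YX,XY,Y_1X,XY_1\in R$ to eliminate from a word any $X$ standing next to a $Y$ or a $Y_1$, one obtains the normal form $D_n=RX^n$ for $n\ge0$ and $D_{-m}=\sum_wRw$ over words $w$ of length $m$ in $Y,Y_1$ for $m>0$; in particular $D_0=R$. Thus $M_{\mathfrak m}=\sum_{\alpha^n(\mathfrak m)=\mathfrak m}D_nv$. If $\omega$ is infinite the only such $n$ is $0$, so $M_{\mathfrak m}=Rv=\K v$, as $R$ acts on $M_{\mathfrak m}$ through the character $\mathfrak m$; this settles linear orbits.

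\textbf{Step 2: finite orbits and the central element $X^r$.}
Assume $|\omega|=r<\infty$; this forces $\mathrm{char}\,\K=p>0$ and $q$ a root of unity, with $p\mid r$ and $q^r=1$. A direct check with the defining relations (using $\alpha^r=\mathrm{id}_R$ and $q^r=1$) gives $X^r\tau=\tau X^r$, $X^r\sigma=\sigma X^r$, $X^rY=YX^r$ and $X^rY_1=Y_1X^r$, so $X^r$ is central in $D$ (as are $\sigma^{\pm r}$ and $\tau^p-\tau$). Since $D$ is module-finite over its center (a standard feature of rings of quantum differential operators at a root of unity in positive characteristic), it is a PI algebra, so $M$ is finite dimensional. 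The central element $X^r$ therefore has an eigenvalue $\lambda\in\K$ on $M$; as $\ker(X^r-\lambda)$ is then a nonzero $D$-submodule, $X^r=\lambda$ on all of $M$.

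\textbf{Step 3: the case $\lambda\ne0$.}
If $\lambda\ne0$ then $X$ is bijective on $M$, hence $X\colon M_{\mathfrak n}\xrightarrow{\ \sim\ }M_{\alpha(\mathfrak n)}$ for every $\mathfrak n\in\omega$; moreover $YX=\tau$ and $Y_1X=q\sigma-1$ give $Y=\tau X^{-1}$ and $Y_1=(q\sigma-1)X^{-1}$ as operators on $M$. Consequently a $\K$-subspace of $M$ is a $D$-submodule precisely when it is graded and stable under $X$ and under $X^{-1}$; so for any subspace $N_0\subseteq M_{\mathfrak m}$ the graded subspace $\bigoplus_iX^iN_0$ (well defined because $X^rN_0=\lambda N_0=N_0$) is a $D$-submodule. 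Irreducibility forces $M_{\mathfrak m}$ to have no proper nonzero subspace, i.e.\ $\dim M_{\mathfrak m}=1$, and the same applies at every weight in $\omega$.

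\textbf{Step 4: the remaining case, and the main obstacle.}
It remains to rule out $\lambda=0$. If $\omega$ has no $A_q$-break then $Y_1X=q\sigma-1$ is invertible on each weight space, so every $X|_{M_{\mathfrak n}}$ is injective, hence $X^r$ is injective and $\lambda\ne0$; symmetrically if $\omega$ has no $A_1$-break; and if the $A_1$-breaks and $A_q$-breaks occupy disjoint positions in $\omega$, the relation $\tau Y_1=(q\sigma-1)Y$ still furnishes a length-$r$ word $Z$ in $Y,Y_1$ with $ZX^r$ a nonzero scalar, forcing $\lambda\ne0$ once more. Hence $\lambda=0$ can occur only when $\omega$ has a weight at which $\tau=0$ and $q\sigma-1=0$ simultaneously. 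There $X$ is still bijective on every weight space other than these coinciding breaks, and $\tau Y_1=(q\sigma-1)Y$ forces $Y$ and $Y_1$ to be proportional on every weight space that is not an $A_1$-break; the remaining task is to combine these two rigidities with the simultaneous DGO string-decompositions of $M$ as an $A_1$- and as an $A_q$-module (Sections \ref{DGO-indecomposables-of-Aq-char0}, \ref{DGO-indecomposables-of-A1-char0}), matched at the overlapping breaks, to conclude that an irreducible $M$ can have no two-dimensional weight space. I expect this bookkeeping at the coinciding breaks to be the technically delicate heart of the argument.
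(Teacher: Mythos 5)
Your Steps 1--3 are sound, but the proof is not complete: Step 4 explicitly stops short of the one case that actually carries all the difficulty, namely a circular orbit containing a weight $\mathfrak b$ with $\tau\in\mathfrak b$ and $q\sigma-1\in\mathfrak b$ simultaneously, with $X^{r}$ acting as $0$. This is not a case one can defer: it is exactly the configuration in which the paper's Section \ref{indecomposablefamily} (see \ref{counter-example-to-thm1} and Theorem \ref{thm-family-pm}) builds weight $D$-modules whose weight spaces are $m$-dimensional for arbitrary $m$, so the relations alone do not bound weight-space dimensions and only irreducibility can; the ``bookkeeping at the coinciding break'' you postpone is therefore the entire content of the proposition in that case. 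A secondary soft spot is Step 2: the finite-dimensionality of $M$ is derived from the assertion that $D$ is module-finite over its center, which you do not verify (the degree-$(-rk)$ components are spanned by all length-$rk$ words in $Y,Y_1$, and it is not obvious these are finitely generated over the central subalgebra you exhibit); since Step 3 and the injectivity arguments of Step 4 rest on the eigenvalue of $X^{r}$, this too needs to be closed.

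For comparison, the paper's proof is a one-line reduction of a different kind: writing $e_{\mathfrak m}$ for the projection onto a weight space, $M_{\mathfrak m}=e_{\mathfrak m}M$ is a simple module over the subalgebra $Q(\mathfrak m,\mathfrak m)=e_{\mathfrak m}De_{\mathfrak m}$ of ``paths from $\mathfrak m$ to itself'' (the standard idempotent fact for simple modules), and that subalgebra is identified with the commutative algebra $\K[\sigma,\tau]$, whence $\dim M_{\mathfrak m}=1$ over $\overline{\K}=\K$. Your Step 1 normal-form computation $D_{0}=R$ already proves this identification for linear orbits, so you have in effect rederived the paper's argument there; what you would need to finish your route (or to flesh out the paper's in the circular case) is to show that the length-$r$ words in $Y,Y_1$ act on $M_{\mathfrak m}$ through a commutative family -- using $\tau Y_{1}=(q\sigma-1)Y$ to see that the only binary choice in such a word occurs at the single coinciding break -- together with the centrality of $X^{r}$. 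As it stands, the proposal establishes the proposition only for infinite orbits and for finite orbits without a coinciding break.
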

\begin{proof}

Consider the following quiver $Q$

\begin{center}
\begin{tikzpicture}[scale=1.5]
\coordinate  (A) at (0,0);
\coordinate  (B) at (2,0);
\coordinate  (E) at (4,0);

\filldraw[black](E) circle (1pt);
\filldraw[black](A) circle (1pt);
\filldraw[black](B) circle (1pt);

\draw[->] (-1,0.2) to [distance=0.25cm](-0.2,0.1);
\draw[->] (-0.2,-0.3)to [distance=-0.25cm] (-1,-0.4) ;
\draw[->] (-0.2,-0.5)to [distance=-0.25cm] (-1,-0.6) ;
\draw[->] (0.2,0.1) to [distance=0.5cm](1.8,0.1);
\draw[->] (1.8,-0.3) to [distance=-0.5cm](0.2,-0.3);
\draw[->] (1.8,-0.5) to [distance=-0.5cm](0.2,-0.5);

\node (xa) at (1,0.2) {\small{$X$}};
\node (y1b) at (1,-0.4) {\small{$Y_1$}};
\node (y1b) at (1,-0.9) {\small{$Y$}};

\draw[->] (2.2,0.1) to [distance=0.5cm](3.8,0.1);
\draw[->] (3.8,-0.3) to [distance=-0.5cm](2.2,-0.3);
\draw[->] (3.8,-0.5) to [distance=-0.5cm](2.2,-0.5);
\node (xa) at (3,0.2) {\small{$X$}};
\node (y1b) at (3,-0.4) {\small{$Y_1$}};
\node (y1b) at (3,-0.9) {\small{$Y$}};

\node (dots) at (4.5, 0) {$\ldots$};

\coordinate  (C) at (5,0);
%\coordinate  (D) at (7,0);

\filldraw[black](C) circle (1pt);
%\filldraw[black](D) circle (1pt);

\draw[->] (5.2,0.1) to [distance=0.5cm](6.8,0.1);
\draw[->] (6.8,-0.3) to [distance=-0.5cm](5.2,-0.3);
\draw[->] (6.8,-0.5) to [distance=-0.5cm](5.2,-0.5);

\node (xa) at (6,0.2) {\small{$X$}};
\node (y1b) at (6,-0.4) {\small{$Y_1$}};
\node (y1b) at (6,-0.9) {\small{$Y$}};

%\draw[->] (7.2,0.1) to [distance=0.05cm](8,0.2);
%\draw[<-] (7.2,-0.3)to [distance=-0.05cm] (8,-0.4) ;
%\draw[<-] (7.2,-0.5)to [distance=-0.05cm] (8,-0.6) ;

\end{tikzpicture}

\end{center}

subject to all the relations inherited from  $D$. Let $\K Q$ be the path algebra of $Q$.  
Fix an $i$th vertex and consider the cyclic subalgebra $Q(i,i)$ of  $\K Q$ which consists of all paths that start and end in $i$. 
It is a standard fact that the restriction of any irreducible $\K Q$-module $V$ onto $Q(i,i)$ is irreducible. But $Q(i,i)=\K[\sigma, \tau]$. 
Since $\K$ is algebraically closed then any irreducible module over $Q(i,i)$ is $1$-dimensional. 
\end{proof}

\begin{theorem}\label{irred-D-to-indecom}
Let $V$ be an irreducible $D$ weight module. Then $V$ is indecomposable as an $A_q$ as well as an $A_1$ weight module.
\end{theorem}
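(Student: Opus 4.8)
The plan is to argue by contradiction: suppose $V$ is an irreducible $D$ weight module that decomposes as an $A$-module (where $A$ stands for $A_q$ or $A_1$), say $V = V' \oplus V''$ with both summands nonzero $A$-submodules. I would first exploit Proposition~\ref{quiver}: every nonzero weight space $V_{\mathfrak{m}}$ is one-dimensional, so for each $\mathfrak{m} \in \mathrm{Supp}(V)$ exactly one of $V'_{\mathfrak{m}}, V''_{\mathfrak{m}}$ is nonzero. Thus the $A$-decomposition induces a partition $\mathrm{Supp}(V) = S' \sqcup S''$ into the supports of the two summands. Since $V'$ and $V''$ are $A$-submodules and $A \ni X$, the actions of $X$ (and of $T$, the other generator of $A$) must preserve this partition: whenever $X$ maps $V_{\mathfrak{m}}$ nontrivially to $V_{\alpha(\mathfrak{m})}$, the weights $\mathfrak{m}$ and $\alpha(\mathfrak{m})$ lie in the same part. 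The same holds for $T \in \{Y, Y_1\}$.

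Next I would bring in the extra generator that $D$ has over $A$. If $A = A_q$, then $D$ is generated over $A_q$ by $Y$, with $Y$ sending $V_{\mathfrak{m}}$ into $V_{\alpha^{-1}(\mathfrak{m})}$; if $A = A_1$, the extra generator is $Y_1$, again shifting weights by $\alpha^{-1}$. The key relations $Y_1(\tau - 1) = Y(\sigma - 1)$, together with $YX = \tau$, $Y_1 X = q\sigma - 1$, tie the action of the extra generator to that of $A$. The crux is: on any one-dimensional weight space $V_{\mathfrak{m}}$ with $\mathfrak{m} = (\tau - a, \sigma - b)$, if $\tau - 1$ acts invertibly (i.e. $a \neq 1$) then $Y = Y_1 \cdot \frac{\sigma - 1}{\tau - 1}$ there, so $Y$ acts in the same "direction" as $Y_1$ and hence preserves $S', S''$ for free; symmetrically if $\sigma - 1$ acts invertibly ($b \neq 1$) then $Y_1 = Y \cdot \frac{\tau - 1}{\sigma - 1}$. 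Since $\mathfrak{m}$ cannot simultaneously be a break for both GWA structures in a way that makes both $\tau - 1$ and $\sigma - 1$ vanish — that would force $a = 1$ and $b = 1$, and one checks from the orbit structure ($\alpha(\tau) = \tau - 1$, $\alpha(\sigma) = \sigma/q$) that along any orbit this degenerate coincidence is incompatible with $V$ being a nonzero $D$-module extending past that weight — the extra generator always preserves the partition wherever its domain weight space is nonzero. Therefore $V' $ and $V''$ are each $D$-submodules, contradicting irreducibility unless one of them is zero.

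The main obstacle I anticipate is the handful of degenerate weights where the GWA "defining element" $t$ vanishes (a break), so that the clean formula $Y = Y_1\frac{\sigma-1}{\tau-1}$ (or its inverse) is unavailable. At such a weight $\mathfrak{m}$ one must instead use irreducibility of $V$ directly: since $V$ is generated as a $D$-module by any nonzero vector, and the weights reachable from $\mathfrak{m}$ under $X, Y, Y_1$ must exhaust $\mathrm{Supp}(V)$, while $X$ and $T$ alone keep us inside one part, the only way to cross from $S'$ to $S''$ is via the extra generator at a break weight. I would argue that at such a break the relation $Y_1(\tau-1) = Y(\sigma-1)$ forces the would-be "crossing" arrow to be zero (because at a break one of $\tau-1$, $\sigma-1$ is a unit and the corresponding generator of $A$ already acts within one part, pinning down the other), so no crossing occurs and $\mathrm{Supp}(V)$ is not connected under the full $D$-action — again contradicting irreducibility. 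Handling these break cases uniformly, and in particular ruling out the pathological coincidence $a=1, b=1$ using the precise orbit combinatorics from Section~\ref{general-set-up}, is where the real work lies; the generic weights are essentially automatic.
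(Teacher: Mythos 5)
Your generic step is sound: if $V=V'\oplus V''$ as $A_q$-modules, then for $v\in V'_{\mathfrak m}$ with $\mathfrak m=(\tau-a,\sigma-b)$ the relation $XY=\tau-1$ forces $(a-1)v=XY(v)\in V''$ whenever $Y(v)\in V''$, hence $a=1$; and then $Y_1(\tau-1)=Y(\sigma-1)$ gives $(b-1)Y(v)=0$, hence $b=1$. So the only weight at which the extra generator could cross the partition is $\mathfrak m=(\tau-1,\sigma-1)$ (note: this is not a break of either GWA, but the image under $\alpha$ of the common break $(\tau,\sigma-\tfrac1q)$ --- your formula $Y=Y_1\frac{\tau-1}{\sigma-1}$ fails where $\sigma-1$ vanishes, not ``at a break''). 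The genuine gap is your treatment of this weight. You assert that the coincidence $a=1,b=1$ is ``incompatible with $V$ being a nonzero $D$-module extending past that weight,'' but this is false: the weight $(\tau-1,\sigma-1)$ occurs routinely in the support of irreducible $D$-modules --- it is exactly the weight $\mathfrak m_r$ identified in the paper's own proof of this theorem in the circular-orbit case, and it is the base point $\mathfrak m_0$ of the modules built in Section~\ref{counter-example-to-thm1}. At this weight both $XY$ and $XY_1$ act by zero and the relation $Y_1(\tau-1)=Y(\sigma-1)$ degenerates to $0=0$, so none of the mechanisms you propose pins down where $Y(v)$ lands; the crossing is not formally excluded by the relations alone.

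To close the gap you need a further argument of the following kind: if $w=Y(v)\neq0$ crosses into $V''$, then $X(w)=XY(v)=(\tau-1)(v)=0$, so $w\in\ker X$; one then shows (using $XY=\tau-1$, $XY_1=\sigma-1$) that $\ker X$ can be supported only at the single weight $\alpha^{-1}\bigl((\tau-1,\sigma-1)\bigr)$, whence $V=D\cdot w=S\cdot w$ with $S$ generated by $Y,Y_1$ over $R$, and a walk along the orbit shows every weight space --- in particular $V_{\mathfrak m}\ni v$ --- is reached from $w$ without leaving $V''$, a contradiction. This is essentially the content of the paper's actual proof, which takes a different route altogether: rather than partitioning the support, it analyzes $\ker X$ directly and shows that every nonzero $A_q$- or $A_1$-submodule contains a distinguished vector ($v_0$ or $X^i(v_0)$ for a generator $v_0$ of $\ker X$, or any weight vector when $X$ is injective), which rules out any direct sum decomposition. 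So your strategy is workable and arguably more conceptual, but as written the decisive case is disposed of by an incorrect claim rather than an argument.
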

\begin{proof}
Let $V = \oplus_{\mathfrak{m} \in \omega} V_{\mathfrak{m}}$ be an irreducible $D$ weight module
where $\omega$  is one orbit. 

\noindent
\textbf{Case:  $X$ is injective on $V$.}
If $\omega$ is cyclic, then $V$ is finite dimensional and every nonzero weight space is one dimensional. Let $v_0 \in \mathfrak{m}_0 \in \omega, v_0 \neq 0$. Then $X^i (v_0) \in \alpha^i (\mathfrak{m}_0)$ and $X^i (v_0)\neq 0$ for every $i\geq 0$. That is,  if $\omega = \{ \alpha^i (\mathfrak{m}_0 )\mid 0\leq i \leq r\}$, then $V$ has basis $\{ X^i (v_0) \mid 0\leq i \leq r \}$, and 
the only $X$-invariant subspaces are $\{ 0\}$ and $V$. That is, $V$ is indecomposable as an $A_1$ and 
as an $A_q$ module. 

If $\omega$ is linear
and suppose there exists a $v_0 \in V, v_0 \neq 0$ such that the preimage $X^{-1}(v_0) = \emptyset$. 
If  $Y(v_0) = w \neq 0$, then $XY (v_0) = (\tau -1)(v_0)$. If $(\tau -1)(v_0) = cv_0, c\neq 0$, then 
$X(\frac{w}{c}) = v_0$ contradicting the fact that 
$X^{-1}(v_0) = \emptyset$. Thus, $XY (v_0) = (\tau -1)(v_0) = 0$. If $w\neq 0$, then $X(w)=0$
contradicting the injectivity of $X$. Thus, $Y(v_0)=0$. Similar argument gives us $Y_1(v_0)=0$.  
In other words, the span of $\{ X^i (v_0) \mid i\geq 0 \}$ is a $D$-submodule of $V$, and hence 
$V = \oplus_{i=0}^{\infty}\K X^i (v_0)$ and therefore indecomposable as $A_1$ and $A_q$ module.

Suppose the preimage $X^{-1}(v) \neq \emptyset$ for any weight vector $v$. Let $v_0 \in V_{\mathfrak{m}_0}, v_0 \neq 0$ be a weight vector.
Then $V_{\alpha^i (\mathfrak{m}_0)} = \K X^i(v_0)$ for any $i\in \Z$. Note that since $X$ is injective,
$X^{i}(v_0)$ is a singleton set for $i<0$. Thus, $V$ is indecomposable as $A_1$ and $A_q$ module.

\noindent
\textbf{Case:  $X(v_0)=0$ for a nonzero vector $v_0 \in V_{\mathfrak{m}_0}$ for some $\mathfrak{m}_0\in \omega$.}
Then $V = D\cdot v_0 = S \cdot v_0$ where $S$ is the subalgebra of $D$ generated by $Y, Y_1$ over $R$.

Suppose $| \omega| = \infty$. 
That is, $V = \oplus_{i\leq 0} V_{\alpha^i (\mathfrak{m}_0)} \neq 0$. 
Suppose $X: V_{\alpha^i (\mathfrak{m}_0)} \to V_{\alpha^{i+1} (\mathfrak{m}_0)} $ is zero on 
these one-dimensional spaces for some $i<0$. Then,  $W = \oplus _{j \leq i} V_{\alpha^j (\mathfrak{m}_0)}$
is a $D$-submodule of $V$.  Therefore, $X: V_{\alpha^i (\mathfrak{m}_0)} \to V_{\alpha^{i+1} (\mathfrak{m}_0)} $ is an isomorphism for every $i<0$. For any $A_1$ or $A_q$
submodule $W$ of $V$, if
$v_i \in W$, then $X^j(v_i) \in W, \quad \forall j\geq 0$. In particular $v_0 \in W$. 
Thus $V$ is $A_1$ and $A_q$ indecomposable.

Suppose $| \omega| =r+1< \infty$. Let $V = V_{\mathfrak{m}_r} \oplus V_{\mathfrak{m}_{r-1}}\oplus
	\ldots \oplus V_{\mathfrak{m}_0}$. 
Suppose $V_{\mathfrak{m}_i} =0$ for some $i, 0 < i \leq r$,
then $W = V_{\mathfrak{m}_{i-1} }\oplus V_{\mathfrak{m}_{i-2}}\oplus \ldots \oplus V_{\mathfrak{m}_0}$
is a $D$-submodule of $V$. Therefore, we may assume that 
every $V_{\mathfrak{m}_i} $ is one dimensional for $0\leq i \leq r$ and let $u_i$ be a basis vector for
$V_{\mathfrak{m}_i}$ for each $0\leq i \leq r$.  Note that $Y, Y_1: V_{\mathfrak{m}_r} \to 
V_{\mathfrak{m}_0}$ since $\alpha^{-1}(\mathfrak{m}_r) = \mathfrak{m}_0$. 
Since $XY(u_r) = XY_1(u_r)= 0$ and $XY = \tau -1$ and $XY_1 = \sigma -1$,
we have $\mathfrak{m}_r = (\tau -1, \sigma -1)$.  If 
$X:V_{\mathfrak{m}_i} \to V_{\mathfrak{m}_{i-1}} $ is the zero map for any $i, 0<i\leq r$, then again
$\mathfrak{m}_{i-1} = (\tau -1, \sigma -1)$ contradicting the fact that $\omega$ is a single orbit.

Therefore,  $X:V_{\mathfrak{m}_i} \to V_{\mathfrak{m}_{i-1}}$
is an isomorphism for every $0<i\leq r$. 
Then $V = \oplus_{i=0}^r \K X^i (u_r)$; 
in other words, every $X$-invariant subspace of $V$ contains $v_0$, thereby making $V$ an indecomposable $A_1$ and an $A_q$ module.
\end{proof}

%--------------------------------------------------------------------
%--------------------------------------------------------------------
\subsection{Characteristic of $\K$ is $0$}\label{subsect-D-irred-char0-alg-clo}
%--------------------------------------------------------------------
%--------------------------------------------------------------------
Here $\mathfrak{m} = (\tau -a, \sigma -b)$ for some $a,b \in \K$, with $b\neq 0$.
Note that $(q\sigma -1) \in \mathfrak{m}$ if and only if $b=\dfrac{1}{q}$
and $\tau \in \mathfrak{m}$ if and only if $a=0$. In other words, the set of 
breaks $B = \{ (\tau -a, \sigma - \frac{1}{q} ) \mid a\in \K \}$ when studying $A_q$ weight modules,
and $B = \{ (\tau , \sigma -b) \mid b\in \K \}$ when studying $A_1$ weight modules.

Fix $\mathfrak{m} = (\tau -a, \sigma -b)$.  
Then, $\alpha^k (\mathfrak{m}) = (\tau -a -k , \frac{\sigma}{q^k}-b)$. 
That is, $\omega = \{ (\tau -a -k , \frac{\sigma}{q^k}-b) \mid k \in \Z \}$.
If $\K$ is of characteristic $0$ or if $q$ is not
a root of unity, then $|\omega|=\infty$.  
Note, $(q\sigma -1) \in \alpha^k (\mathfrak{m})$  if and only if $b = q^{-k-1}$ (respectively,
$\tau \in \alpha^k (\mathfrak{m})$  if and only if $a=-k$)
 for $k\in \Z$.
That is, $|B_{\omega}| \leq 1$ since $B_{\omega} = \emptyset $ if $b \notin \{ q^i \}_{i\in \Z}$ 
or $B_{\omega} =  (\tau -a -k , q\sigma-1)$ if $b = q^{-k-1}$, while studying $A_q$-modules; or 
$B_{\omega} = \emptyset $ if $a \notin  \Z$ 
or $B_{\omega} =  (\tau , \sigma - q^kb)$ if $a=k$ while studying $A_1$-modules.

We now consider the following families:

\noindent
\textbf{Family I:} Let $V$ be an irreducible $A_q$ weight module. 

\noindent
\textbf{Case: $q$ is a root of $1$.} Here,  by Theorem (5.8) of \cite{DGO}
$V$ is
isomorphic to $V_q(\omega ,b)$ for $b \in \K^* \setminus \{ q^i \}_{i\in \Z}$.   
For any $a\in \K$, the module $V_q (\omega ,b)$ can be given an irreducible $D$-weight module structure, and we denote this $D$ module by $V_q(\omega, b,a)$. The details are given in section 
\ref{DGO-indecomposables-of-Aq-char0}, case \ref{qVomega}. 

\noindent
\textbf{Case: $q$ is not a root of $1$.} 
In this case we have more families of irreducible modules. 
Here, as an irreducibel $A_q$-module, $V$ is one of the three:
$V_q(\omega, b)$ for $b\in \K^* \setminus \{ q^i \}_{i \in \Z}$ (described in 
section \ref{DGO-indecomposables-of-Aq-char0}, case \ref{qVomega}),  
or $V_q(\omega, J,J')$ (described in section \ref{DGO-indecomposables-of-Aq-char0}
case \ref{qVJJ'}) subcases $ (3)$ and $(4)$. For any $a\in \K$, these $A_q$-modules are extended to irreducible $D$ weight modules; these extensions, denoted $V_q(\omega, b,a), V_q(\omega, J,J',a)$, are also described in the mentioned sections.

\noindent
\textbf{Family II:} Let $V$ be an irreducible $A_1$ weight module. Here, $V$ is one of the three:
$V_1(\omega,b )$  (described in section \ref{Vomega}) and 
$V_1(\omega, J,J', b)$ cases $ (3)$ and $(4)$ (described in section  \ref{VJJ'} cases $(3)$ and $(4)$). 
Since these modules are $A_1$ irreducible, they are $D$-irreducible whether or not $q$ is a root of unity.

\noindent
\textbf{Family III: Irreducible $D$ weight modules which do not arise
from the families I and II above}
A $D$ weight module is also an $A_q$ and an $A_1$ weight module. Note that an irreducible
$D$-weight module need not result in an irreducible $A_q$ or an irreducible $A_1$ weight module.
For instance,  the module $V_q( \omega,J,J',a)$ described in  \ref{qVJJ'} case (1)
is an irreducible $D$ weight module which does not arise from the two families I and II. 
But by Theorem \ref{irred-D-to-indecom}, every irreducible $D$-module results in an indecomposable $A_1$
and an indecomposable $A_q$-module.
So, we start with those modules $V$ which are indecomposable (but not irreducible) as $A_1$-modules, and which can be extended to irreducible $D$-modules while ensuring that they stay indecomposable (but not irreducible) as $A_q$-modules.

Thus, $|\omega | = \infty$, and $B_{\omega} \neq \emptyset$, where $\omega = \{ \alpha^i (\tau -a, \sigma -b) \}$.  Here, refer to section \ref{VJJ'}.  We see $B_{\omega} \neq \emptyset$ implies that $a\in \Z$. 
We obtain irreducible $D$-modules in two ways. One is described in case (1), for $b \in \K \setminus \{ q^{a-1} \}$. The other, and its generalization, is described in case (2) for $b = q^{a-1}$. 
Note that in these cases, $q$ may or may not be a root of unit (refer to \ref{qpVJJ'}).

\begin{theorem}\label{alg.cl.char0.Virred}
When characteristic of $\K$ is $0$ the irreducible $D$-modules are as described in
families I, II, and III.
\end{theorem}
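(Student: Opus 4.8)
The plan is to start from an arbitrary irreducible weight $D$-module $V$ and locate it on the classification of indecomposable weight modules over the two generalized Weyl algebras $A_q$ and $A_1$. Since $X,Y,Y_1$ intertwine the $R$-action via $\alpha^{\pm 1}$, the support of $V$ lies in a single $\langle\alpha\rangle$-orbit $\omega$; as $\mathrm{char}\,\K=0$ the $\tau$-values along $\omega$ are $a,a\pm 1,a\pm 2,\dots$, so $|\omega|=\infty$ and I would index $\omega=\{\mathfrak{m}_i\}_{i\in\Z}$ with $\tau$ acting on $V_{\mathfrak{m}_i}$ by $a+i$ and $\sigma$ by a scalar depending on $i$ and a base value $b$. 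By Proposition \ref{quiver} each $V_{\mathfrak{m}_i}$ is $0$- or $1$-dimensional, and because $V$ is generated by any nonzero weight vector while $X$ raises and $Y,Y_1$ lower the index by one, the set $\{i:V_{\mathfrak{m}_i}\neq 0\}$ is an interval in $\Z$. That interval cannot be finite: at its top index $X$ annihilates the weight space, so $YX=\tau$ forces $\tau=0$ there, while at its bottom index $Y$ annihilates the weight space, so $XY=\tau-1$ forces $\tau=1$ there, which is impossible for a nonempty finite interval. Hence $\mathrm{Supp}(V)$ is $\Z$ or a half-line, and $V$ is completely determined by the scalars through which $X,Y,Y_1$ act between consecutive one-dimensional weight spaces, constrained by $YX=\tau$, $Y_1X=q\sigma-1$ and $Y_1(\tau-1)=Y(\sigma-1)$.

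By Theorem \ref{irred-D-to-indecom}, $V$ is indecomposable both over $A_q$ and over $A_1$, so as an $A_q$-module it appears on the list of indecomposable weight GWA-modules of \cite{DGO}: with $|\omega|=\infty$ it must be $V_q(\omega,b)$ when $B_\omega=\emptyset$ (so $b\notin\{q^i\}$) or one of the modules $V_q(\omega,J,J')$ when $B_\omega\neq\emptyset$ (so $b\in\{q^i\}$), and symmetrically for $A_1$. I would then split into three cases. If $V$ is irreducible as an $A_q$-module it is $V_q(\omega,b)$ or one of the subcases of section \ref{qVJJ'} in which $X$ is injective, namely cases (3) and (4); there $X$ has a one-sided inverse off at most one weight space, so $Y$ is forced by $YX=\tau$, and one recovers exactly the Family I modules $V_q(\omega,b,a)$ and $V_q(\omega,J,J',a)$ built in section \ref{DGO-indecomposables-of-Aq-char0}. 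The mirror argument, recovering $Y_1$ from $Y_1X=q\sigma-1$ on an $A_1$-irreducible module ($V_1(\omega,a)$ of section \ref{Vomega}, or the injective-$X$ subcases of section \ref{VJJ'}), gives Family II.

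In the remaining case $V$ is reducible over both $A_q$ and $A_1$. Since $V_q(\omega,b)$ and $V_1(\omega,a)$ are irreducible, $V$ must have a break for both structures, i.e. $b\in\{q^i\}$ and $a\in\Z$; after normalizing, $V$ is simultaneously one of the $V_q(\omega,J,J')$ and one of the $V_1(\omega,J,J')$ modules with a break. Running through the finitely many possibilities for $(J,J')$ enumerated in sections \ref{qVJJ'} and \ref{qpVJJ'} (for $q$ a root of unity) and section \ref{VJJ'}, and imposing the $D$-relations together with the hypothesis that $V$ be $D$-irreducible, one finds: the subcases with the break outside $J'$ produce a genuine proper $D$-submodule, namely the weight spaces on the far side of the break, and are excluded; the injective-$X$ subcases have already been counted in Families I and II; what survives is exactly the family in which the break lies inside $J'$, so that $X$ stays bijective — these are the Family III modules of section \ref{qVJJ'} case (1) (and its $A_1$-side counterpart in section \ref{VJJ'}), i.e. precisely the modules listed under Family III. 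The converse, that every module on the Families I, II, III lists is indeed $D$-irreducible, is the module-by-module verification already carried out in sections \ref{DGO-indecomposables-of-Aq-char0} and \ref{DGO-indecomposables-of-A1-char0}.

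The hard part will be this last case: one must check that the enumeration of subcases of $V_q(\omega,J,J')$ and $V_1(\omega,J,J')$ is genuinely exhaustive — several entries give isomorphic modules, so one must neither double-count nor overlook a combination — and that the $D$-compatibility relations, in particular $Y_1(\tau-1)=Y(\sigma-1)$, which couples the two GWA structures, truly force every module reducible over both $A_q$ and $A_1$ into the Family III shape, leaving no room for an exotic extension. The rest of the argument is routine bookkeeping once Proposition \ref{quiver} and Theorem \ref{irred-D-to-indecom} are in hand.
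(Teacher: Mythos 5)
Your proposal is correct and follows essentially the same route as the paper: Proposition \ref{quiver} and Theorem \ref{irred-D-to-indecom} reduce the problem to the \cite{DGO} lists of indecomposable weight modules over $A_q$ and $A_1$, and your trichotomy ($A_q$-irreducible, $A_1$-irreducible, reducible over both) reproduces Families I, II and III exactly as in section \ref{subsect-D-irred-char0-alg-clo}. If anything you are slightly more careful than the text in the third case, where you exclude the $J'=\emptyset$ subcase because the weight spaces on the far side of the break form a proper $D$-submodule; this agrees with the paper's own analysis of that subcase (section \ref{qVJJ'}, case (2), where the module is stated to be indecomposable but not irreducible), even though the paper still lists it under Family III.
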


%----------------------------------------------------------------------------------------------------------------------
%----------------------------------------------------------------------------------------------------------------------
\begin{remark}
Not every indecomposable $A_1$-module can be given a $D$-module structure, (see Remark 
\ref{A1-indec-no-D-str}, section \ref{VJJ'}).
Likewise, not every indecomposable $A_q$-module can be given a $D$-module structure, (see Remark 
\ref{Aq-indec-no-D-str}, section \ref{qVJJ'}, and Remark \ref{Aqp-indec-no-D-str} , section \ref{qpVJJ'}).
\end{remark}

\begin{remark}
The key point in Theorem \ref{irred-D-to-indecom} is Proposition \ref{quiver}; that is, 
every weight-space of an irreducible $D$ weight module is one dimensional. Note that an indecomposable $D$ weight module need not satisfy this property. For example,  let $V (c,d) = \left( \oplus_{i \leq -1} \K u_i \right) \oplus \left( \oplus_{i \leq -1} \K w_i \right)  \oplus
	\left( \oplus_{i \geq 0} \K v_i \right) $ be given a $D$-module structure as follows:
\[
\sigma (x) = \begin{cases}
		\frac{x}{q^i} &\textit{ if } x=u_{-i},\\
		\frac{x}{q^i} &\textit{ if } x=w_{-i},\\
		q^i x &\textit{ if } x=v_{i};
	\end{cases}
\quad
X (x) = \begin{cases}
		u_{i+1}  &\textit{ if } x=u_{i}, i\neq -1,\\
		w_{i+1} &\textit{ if } x=w_{i}, i \neq -1,\\
		0 &\textit{ if } x=w_{-1} \textit{ or } x= u_{-1},\\
		v_{i+1} &\textit{ if } x=v_{i};
	\end{cases}
\]
\[
\tau (x) = \begin{cases}
		-ix  &\textit{ if } x=u_{-i},\\
		-ix &\textit{ if } x=w_{-i},\\
		ix &\textit{ if } x=v_{i};
	\end{cases}
	\quad
Y (x) = \begin{cases}
		(i-1) u_{i-1}  &\textit{ if } x=u_{i},\\
		(i-1) w_{i-1} &\textit{ if } x=w_{i}, \\
		(i-1) v_{i-1} &\textit{ if } x=v_{i}, i\neq 0,\\
		cu_{-1} &\textit{ if } x=v_0;		
	\end{cases}
\]
\[
Y_1 (x) = \begin{cases}
		(q^{i-1} -1)u_{i-1}  &\textit{ if } x=u_{i},\\
		(q^{i-1} -1) w_{i-1} &\textit{ if } x=w_{i}, \\
		(q^{i-1} -1) v_{i-1} &\textit{ if } x=v_{i}, i\neq 0,\\
		dw_{-1} &\textit{ if } x=v_0;		
	\end{cases}
\]

One can see that $V(c,d), c,d \neq 0$ is an indecomposable $D$-weight module, with weight-spaces 
$V_{\mathfrak{m}_i} = \K u_i \oplus \K w_i$ for $i\leq -1$ where
$\mathfrak{m}_i $ is the maximal ideal $ (\tau - i, \sigma - \frac{1}{q^i})$, and
$V_{\mathfrak{m}_i} = \K v_i$ for $i\geq 0$. The picture for this is as follows:

%------------Start here

\begin{center}
\begin{tikzpicture}[scale=1.5]
%\coordinate [label=below:$w_{-3}$] (A) at (0,0);
\coordinate [label=below:$w_{-2}$] (B) at (2,0);
\coordinate [label=below:$w_{-1}$] (C) at (4,0);

%\filldraw[black](A) circle (1pt);
\filldraw[black](B) circle (1pt);
\filldraw[black](C) circle (1pt);

%\draw[->] (-1,0.2) to [distance=0.05cm](-0.2,0.1);
%\draw[->] (-0.2,-0.3)to [distance=-0.05cm] (-1,-0.4) ;
%\draw[->] (-0.2,-0.5)to [distance=-0.05cm] (-1,-0.6) ;
\draw[->] (0.2,0.1) to [distance=0.5cm](1.8,0.1);
\draw[->] (1.8,-0.3) to [distance=-0.5cm](0.2,-0.3);
\draw[->] (1.8,-0.5) to [distance=-0.5cm](0.2,-0.5);
\node (xa) at (1,0.2) {\small{$X=1$}};
\node (y1b) at (1,-0.4) {\small{$Y_1=\frac{1}{q^3}-1$}};
\node (y1b) at (1,-0.9) {\small{$Y=-3$}};

\draw[->] (2.2,0.1) to [distance=0.5cm](3.8,0.1);
\node (xb) at (3,0.2) {\small{$X=1$}};
\draw[->] (3.8,-0.3) to [distance=-0.5cm](2.2,-0.3);
\draw[->] (3.8,-0.5) to [distance=-0.5cm](2.2,-0.5);
\node (y1b) at (3,-0.4) {\small{$Y_1=\frac{1}{q^2}-1$}};
\node (y1b) at (3,-0.9) {\small{$Y=-2$}};

%---------------------------------------------------------------------------

%\coordinate [label=below:$u_{-3}$] (D) at (0,2);
\coordinate [label=below:$u_{-2}$] (E) at (2,2);
\coordinate [label=below:$u_{-1}$] (F) at (4,2);

%\filldraw[black](D) circle (1pt);
\filldraw[black](E) circle (1pt);
\filldraw[black](F) circle (1pt);

%\draw[->] (-1,2.2) to [distance=0.05cm](-0.2,2.1);
%\draw[->] (-0.2,1.7)to [distance=-0.05cm] (-1,1.6) ;
%\draw[->] (-0.2,1.5)to [distance=-0.05cm] (-1,1.4) ;
\draw[->] (0.2,2.1) to [distance=0.5cm](1.8,2.1);
\draw[->] (1.8,1.7) to [distance=-0.5cm](0.2,1.7);
\draw[->] (1.8,1.5) to [distance=-0.5cm](0.2,1.5);
\node (xa) at (1,2.2) {\small{$X=1$}};
\node (y1b) at (1,1.6) {\small{$Y_1=\frac{1}{q^3}-1$}};
\node (y1b) at (1,1.1) {\small{$Y=-3$}};

\draw[->] (2.2,2.1) to [distance=0.5cm](3.8,2.1);
\node (xb) at (3,2.2) {\small{$X=1$}};
\draw[->] (3.8,1.7) to [distance=-0.5cm](2.2,1.7);
\draw[->] (3.8,1.5) to [distance=-0.5cm](2.2,1.5);
\node (y1b) at (3,1.6) {\small{$Y_1=\frac{1}{q^2}-1$}};
\node (y1b) at (3,1.1) {\small{$Y=-2$}};

%-----------------------------------------------------------------------------------------------
\coordinate [label=below:$v_{0}$] (G) at (6,1);
%\coordinate [label=below:$v_{1}$] (H) at (8,1);

\filldraw[black](G) circle (1pt);
%\filldraw[black](H) circle (1pt);

\draw[->] (6.2,1.1) to [distance=0.5cm](7.8,1.1);
\draw[->] (7.8,0.7) to [distance=-0.5cm](6.2,0.7);
\draw[->] (7.8,0.5) to [distance=-0.5cm](6.2,0.5);
\node (xa) at (7,1.2) {\small{$X=1$}};
\node (y1b) at (7,0.6) {\small{$Y_1=0$}};
\node (y1b) at (7,0.1) {\small{$Y=0$}};

%\draw[->] (8.2,1.1) to [distance=0.25cm](9,1.1);
%\node (xb) at (9,1.2) {\small{$X=1$}};
%\draw[->] (9,0.7) to [distance=-0.25cm](8.2,0.7);
%\draw[->] (9,0.5) to [distance=-0.25cm](8.2,0.5);
%\node (y1b) at (9,0.6) {\small{$Y_1=q-1$}};
%\node (y1b) at (9,0.1) {\small{$Y=1$}};
%----------------------------------------------------------------------------------------
\draw[->] (4.2,2) -- (5.8,1.2);
\node (xb) at (5.2,1.8) {\small{$X=0$}};

\draw[->] (4.2,0) -- (5.8,0.8);
\node (xb) at (5.2,0.2) {\small{$X=0$}};

\draw[->] (5.7,1.1) -- (4.2,1.8) ;
\node (xb) at (5,1.2) {\small{$Y=c$}};

\draw[->] (5.7,0.9) -- (4.2,0.2) ;
\node (xb) at (5,0.8) {\small{$Y_1=d$}};

\end{tikzpicture}

\end{center}

\end{remark}

%--------------------------------------------------------------------
%--------------------------------------------------------------------
%-------------------------------------------------------------------
%--------------------------------------------------------------------
\subsection{Characteristic of $\K$ is nonzero
and $q$ is not a root of $1$}\label{subsect-D-irred-charp-alg-clo-q-not-root}
%--------------------------------------------------------------------
%--------------------------------------------------------------------
Here again we go through three families: Family I of irreducible $A_q$ modules, Family II of irreducible $A_1$ modules, and
Family III of indecomposable $A_q$ and $A_1$ modules which are not irreducible in either case. 

\noindent
\textbf{Family I:}
Here, as an irreducible $A_q$-module, $V$ is one of the three:
$V_q(\omega, b)$ for $b\in \K^* \setminus \{ q^i \}_{i \in \Z}$ (described in 
section \ref{DGO-indecomposables-of-Aq-char0}, case \ref{qVomega}),  
or $V_q(\omega, J,J')$ (described in section \ref{DGO-indecomposables-of-Aq-char0}
case \ref{qVJJ'}) subcases $ (3)$ and $(4)$. For any $a\in \K$, these $A_q$-modules are extended to irreducible $D$ weight modules; these extensions, denoted $V_q(\omega, b,a), V_q(\omega, J,J',a)$, are also described in the mentioned sections.
When $a\in \K \setminus \Z_p$, then the resulting $D$-modules are also $A_1$-modules. If $a\in \Z_p$
then the resulting modules are indecomposable $A_1$-modules. 

\noindent
\textbf{Family II:} Let $V$ be an irreducible $A_1$ weight module. Here, $V$ is one of the three:
$V_1(\omega,b, a)$  (described in section \ref{Vomega}) and 
$V_1(\omega, J, J')$ (described in section \ref{VJJ'p}).
These are $D$-irreducible weight modules.  

\noindent
\textbf{Family III: Irreducible $D$ weight modules which do not arise
from the families I and II above}
Here we start with indecomposable $A_q$-modules and extend them to irreducible $D$-modules while
keeping $A_1$-structure indecomposable, although not irreducible. 
These appear in two situations of section \ref{qVJJ'}: One is described in case (1) with $a\neq 0$; the other
is descibed in case (2) and its generalization with $c=0, d\neq 0$. 

\begin{theorem}\label{alg.cl.q-not-root.Virred}
When characteristic of $\K$ is not zero, and $q$ is not a root of $1$ the irreducible $D$-modules are as described in families I, II, and III.
\end{theorem}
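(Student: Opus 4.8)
The plan is to carry out the same three-family analysis as in Theorem~\ref{alg.cl.char0.Virred}, with the prime field $\Z_p$ playing the role that $\Z$ played there. The two ingredients already available are Proposition~\ref{quiver}, so that every nonzero weight space of an irreducible $D$-weight module is one-dimensional, and Theorem~\ref{irred-D-to-indecom}, so that such a module is simultaneously indecomposable over $A_q$ and over $A_1$.

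First I would fix an irreducible $D$-weight module $V$. From the relations $Xr=\alpha(r)X$, $Yr=\alpha^{-1}(r)Y$, $Y_1r=\alpha^{-1}(r)Y_1$, the support of $V$ lies in a single $<\alpha>$-orbit $\omega=\{\alpha^i(\tau-a,\sigma-b)\}_{i\in\Z}\subseteq Max(R)$; because $q$ is not a root of $1$ the $\sigma$-coordinates $q^ib$ are pairwise distinct, so $\omega$ is linear and $V$ is infinite-dimensional (a finite-dimensional indecomposable weight $A_q$-module on a linear orbit would need two breaks there, while $A_q$ has at most one break per orbit here). Then I would restrict $V$ to $A_q$: by Theorem~\ref{irred-D-to-indecom} and the \cite{DGO} classification recalled in Section~\ref{general-set-up}, $V|_{A_q}\cong V_q(\omega,b)$ with $b\notin\{q^i\}_{i\in\Z}$ when $B_\omega=\emptyset$, and $V|_{A_q}\cong V_q(\omega,J,J')$ for suitable $J,J'$ when $B_\omega\neq\emptyset$ (in which case $|B_\omega|=1$). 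Now I would split into three mutually exhaustive cases.

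\emph{Case I: $V|_{A_q}$ is irreducible.} By the \cite{DGO} list of simple weight modules this forces $V|_{A_q}$ to be $V_q(\omega,b)$ or a half-line module $V_q(\omega,J,J')$ of case $(3)$ or $(4)$ of Section~\ref{qVJJ'}; on each of these $X$ is invertible off at most one boundary vector, so the defining relations $YX=\tau$, $XY=\alpha(\tau)$ pin down the action of $Y$, hence the $D$-structure, uniquely---these are the Family~I modules. \emph{Case II: $V|_{A_q}$ is reducible but $V|_{A_1}$ is irreducible.} The symmetric argument over $A_1$, whose breaks are now the maximal ideals with $\tau=0$ (present for all $i$ when $a\in\Z_p$, absent when $a\notin\Z_p$), identifies $V$ with $V_1(\omega,a)$ extended as in Section~\ref{Vomega} or with a $D$-extension of the type constructed in Section~\ref{VJJ'p}---the Family~II modules. \emph{Case III: neither restriction is irreducible.} Then $B_\omega\neq\emptyset$ for $A_q$, so $V|_{A_q}$ is some $V_q(\omega,J,J')$; running through the options of Section~\ref{qVJJ'}, cases $(3)$--$(6)$ either produce an $A_q$-irreducible module or repeat one already met, leaving case $(1)$ (and its $(c,d)$-type generalization), and imposing existence of a $D$-structure together with $D$-irreducibility singles out precisely the modules listed under Family~III, which are by construction indecomposable but not simple over either $A_q$ or $A_1$, consistent with Theorem~\ref{irred-D-to-indecom}. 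Conversely, each module appearing in Families~I--III was shown to be $D$-irreducible in its defining subsection, so the three lists together exhaust the irreducible $D$-weight modules.

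I expect the main obstacle to be the bookkeeping inside Case~III: for each DGO indecomposable $A_q$- or $A_1$-weight module one must decide first whether it admits a $D$-module structure at all---this is exactly where Remarks~\ref{Aq-indec-no-D-str}, \ref{Aqp-indec-no-D-str} and~\ref{A1p-indec-no-D-str} enter, ruling out those that do not---and then, among the survivors, which ones are $D$-irreducible; and one must verify that the family boundaries are clean, for instance that a $D$-extension of a $V_q(\omega,J,J')$ with $a\notin\Z_p$ is already recorded in Family~II (its $A_1$-restriction being the simple module $V_1(\omega,a)$), so that the genuinely new Family~III modules occur only for $a\in\Z_p$. Keeping both break sets straight along one orbit---for $A_q$ the ideals with $\sigma=1/q$, for $A_1$ those with $\tau=0$---is the delicate part; the rest is a line-by-line transcription of the characteristic-zero argument.
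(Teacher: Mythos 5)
Your proposal is correct and follows essentially the same route as the paper: the paper offers no separate proof of this theorem, treating it as a summary of the case analysis in Sections 3--4, which rests exactly on Proposition~\ref{quiver} and Theorem~\ref{irred-D-to-indecom} to force the restriction of an irreducible $D$-weight module to $A_q$ (resp.\ $A_1$) into the \cite{DGO} list, and then on the subsection-by-subsection determination of which indecomposables admit an irreducible $D$-extension. The only nitpick is your parenthetical that $A_1$-breaks are ``present for all $i$'' when $a\in\Z_p$ --- they occur at the infinitely many $i\equiv -a \pmod p$, not at every $i$ --- but this does not affect the argument.
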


%-----------------------------------------------------------------------------------------------------------
%-----------------------------------------------------------------------------------------------------------
%-----------------------------------------------------------------------------------------------------------
%-----------------------------------------------------------------------------------------------------------
%--------------------------------------------------------------------
%-------------------------------------------------------------------
%--------------------------------------------------------------------
%-------------------------------------------------------------------
%--------------------------------------------------------------------
%-------------------------------------------------------------------
%-------------------------------------------------------------------
%--------------------------------------------------------------------
\subsection{Characteristic of $\K$ is nonzero
and $q$ is a root of $1$}\label{subsect-D-irred-charp-alg-clo-q-root}
%--------------------------------------------------------------------
%--------------------------------------------------------------------
In this case, we see that $| \omega| <\infty$.  Thus, we may describe families
I and II as before. 

\noindent
\textbf{Family I:} 
When $B_{\omega}=\emptyset$, we get 
the module $V_q(\omega ,f,b)$ for $b\notin \{ q^i \}_{i\in \Z}$ and $f\in \K^*$ described in section
\ref{qVomegaf}.  This is an irreducible $A_q$-module and extends to an irreducible $D$-module.
When $B_{\omega} \neq \emptyset$, we have two families: The first family is 
the module $V_q(\omega ,j, w)$ which is irreducible for any $j\in \Z_m$ and $w$ the empty word;
this module is described described in section \ref{qVomegajw}. While not every $V_q(\omega ,j, w)$ can be
extended to a $D$-module, when $w$ is the empty word, this module can be extended to a $D$-module which will be irreducible. The second family is 
the module $V_q(\omega ,w,f)$ which is irreducible for the word $w=x^m$ or $w=y^m$; this module is described in section \ref{qVomegawf}. When $w=x^m$, this module can be extended to 
a $D$-module. But if $w=y^m$, then this module may or may not be extended to a $D$-module. 

\noindent
\textbf{Family II:}
As in the preceding paragraph, we have three families of irreducible $A_1$ modules which may be extended
to an irreducible $D$-module. 
The module $V_1(\omega ,f,a)$ for $a\notin \Z_p$ is described in section \ref{1Vomegaf}
and arises when $B_{\omega} = \emptyset$.
This is an irreducible $A_1$ module, and can be extended to a $D$-module. 

The module $V_1(\omega ,j, w)$ is described in section \ref{1Vomegajw}
and arises when $B_{\omega} \neq \emptyset$. This is an irreducible $A_1$-module
if and only if $w$ is the empty word, and it can then be extended to a $D$-module. 

The module $V_1(\omega ,w,f)$ is described in section \ref{1Vomegawf}  and arises when
$B_{\omega} \neq \emptyset$. This is an irreducible $A_1$ module if and only if $w=x^m$ or $x=y^m$.
When $w=x^m$, this module can be extended to an irreducible $D$-module. When $x=y^m$, this module
may or may not be extended to a $D$-module. 

\noindent
\textbf{Family III:}
We start with indecomposable $A_1$-modules (but not irreducible) and extend in such a fashion
that the resultant modules are indecomposable $A_q$-modules (but not irreducible)
and are irreducible as $D$ modules.  Thus, we look at the case where $| \omega|<\infty$ 
and $B_{\omega} \neq \emptyset$. As has been explained in the proof of Theorem \ref{irred-D-to-indecom}
 the kernel of $X$ is one-dimensional. By Proposition \ref{quiver} every weight space is also one-dimensional.
In other words, when we consider module $V_q(\omega, j, w)$ ( section \ref{qVomegajw})
or $V_1(\omega, j, w)$ (section \ref{1Vomegajw}) we need to consider $w =x^m$ where 
$m=|B_{\omega}|$. Both these families may or may not be extended to families of $D$-modules. If 
extended, the extended families are irreducible if and  only if $Y$ and $Y_1$ are not simultaneously zero except at $(\mathfrak{m}_0, e_0)$. 

Similarly, we consider those modules $V_q(\omega ,w,f)$ (section \ref{1Vomegawf})
and $V_1(\omega ,w,f)$ (section \ref{1Vomegawf}). These modules are irreducible as $A_q$, and respectively $A_1$ modules if and only if $w=x^m$ or $w=y^m$. Since we require the dimension of
the kernel of $X$ to be at most $1$, and the dimension of every weight space be equal to $1$, 
we are forced to require that $w$ be a word of degree $m$ in variables
$x,y$ with degree in $y$ being exactly $1$. Both these families may or may not be extended to families of $D$-modules. If 
extended, the extended families are irreducible if and  only if $Y$ and $Y_1$ are not simultaneously zero
except at the same edge as the edge $X=0$. 

\begin{theorem}\label{alg.cl.finite.orbit}
When characteristic of $\K$ is nonzero and $q$ is a root of $1$, the irreducible $D$ weight modules
are those described in
families I ,II, and III.
\end{theorem}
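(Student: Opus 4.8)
The plan is to establish the two directions of the classification. One direction---that each module listed in families I, II, and III is an irreducible $D$ weight module---is already carried out in the constructions of sections \ref{qVomegaf}, \ref{qVomegajw}, \ref{qVomegawf} on the $A_q$ side and \ref{1Vomegaf}, \ref{1Vomegajw}, \ref{1Vomegawf} on the $A_1$ side, so I would simply invoke those. The substance is the converse: every irreducible $D$ weight module $V$ lies in one of these families. Since $X$, $Y$, and $Y_1$ shift weights within a single $\langle \alpha \rangle$-orbit while $R$ preserves weight spaces, $V$ decomposes as a $D$-module over the orbit set, and irreducibility forces $\mathrm{Supp}(V)$ into a single orbit $\omega$. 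Because $\mathrm{char}\,\K = p > 0$ and $q$ is a root of unity, $\alpha$ has finite order $\mathrm{lcm}(p, \mathrm{ord}(q))$, so $\omega$ is finite; together with Proposition \ref{quiver} this shows every weight space of $V$ is one-dimensional, hence $\dim_{\K} V \le |\omega| < \infty$.

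Next I would restrict $V$ to the generalized Weyl subalgebra $A_q$. By Theorem \ref{irred-D-to-indecom}, $V|_{A_q}$ is an indecomposable weight $A_q$-module supported on the finite orbit $\omega$, so it must be one of the finite-orbit indecomposables recalled from \cite{DGO} in section \ref{general-set-up}. If $B_{\omega} = \emptyset$, it is a module of the form $V(\omega, f)$, which is in fact irreducible, i.e.\ one of the $V_q(\omega, f, b)$ of section \ref{qVomegaf}; the $D$-structure built there is essentially forced by $YX = \tau$ and the invertibility of $X$, placing $V$ in Family I. If $B_{\omega} \neq \emptyset$, then $V|_{A_q}$ is a Family~1 module $V_q(\omega, j, w)$ (section \ref{qVomegajw}) or a Family~2 module $V_q(\omega, w, f)$ (section \ref{qVomegawf}). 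Here I would use two numerical constraints already in hand: every weight space of $V$ is one-dimensional, and, by the proof of Theorem \ref{irred-D-to-indecom}, the kernel of $X$ on $V$ is at most one-dimensional. Matching these against the combinatorics of the data $(j, w)$ and $(w, f)$ forces $w$ to be the empty word (the irreducible $A_q$-case, again Family I), or $w = x^m$ with $m = |B_{\omega}|$, or a length-$m$ word of degree exactly $1$ in $y$; and among these the extension to $D$ exists exactly when ``Case 3'' of sections \ref{qVomegajw}--\ref{qVomegawf} is avoided---which happens precisely when the one-dimensional kernel of $X$ sits at the break where $\tau$ acts by $0$, a condition on the discrete parameter---and the extension is irreducible exactly when $Y$ and $Y_1$ do not both vanish except at $(\mathfrak{m}_0, e_0)$. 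These are precisely the modules of Family III.

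Finally I would run the mirror-image argument with $A_1$ in place of $A_q$ (taking $T = Y$ and $t = \tau$ as in section \ref{general-set-up}), invoking Theorem \ref{irred-D-to-indecom} again together with the $A_1$-versions of the constructions in sections \ref{1Vomegaf}, \ref{1Vomegajw}, \ref{1Vomegawf}. This identifies the irreducible $D$-modules whose restriction to $A_1$ is irreducible as Family II, and recovers the same Family III modules from the $A_1$ side; combining the two analyses, every irreducible $D$ weight module lies in family I, II, or III. The step I expect to be the main obstacle is the casework in the break situation: carefully matching the discrete data $(j, w)$ and $(w, f)$ of the \cite{DGO} indecomposables against the one-dimensionality of the weight spaces and of $\ker X$, verifying in each surviving subcase that the $A_q$- (respectively $A_1$-) action really extends to $D$---using $Y = Y_1 (\tau - 1)(\sigma - 1)^{-1}$ wherever $\sigma - 1$ acts invertibly and suitable scalars at the breaks otherwise---and then deciding irreducibility of the resulting extension. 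A secondary subtlety is to confirm that the Family III modules obtained from the $A_q$ side coincide with those obtained from the $A_1$ side, and that no module of Family I or II is double-counted in Family III; this holds because Family III modules are, by construction, reducible over both $A_q$ and $A_1$, whereas Family I (respectively II) modules are irreducible over $A_q$ (respectively $A_1$).
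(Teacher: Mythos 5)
Your proposal follows essentially the same route as the paper: reduce to a single finite orbit, use Proposition \ref{quiver} for one-dimensionality of weight spaces and Theorem \ref{irred-D-to-indecom} for indecomposability over $A_q$ and $A_1$, then match against the finite-orbit indecomposables of \cite{DGO} ($V(\omega,f)$, $V(\omega,j,w)$, $V(\omega,w,f)$), using the one-dimensionality of $\ker X$ to force $w$ empty, $w=x^m$, or a word of degree $1$ in $y$, and checking extendability to $D$ via the Case 1--3 trichotomy. This matches the paper's argument in section \ref{subsect-D-irred-charp-alg-clo-q-root}.
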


%-------------------------------------------------------------------
%--------------------------------------------------------------------
\subsection{A family of indecomposable $D$-modules which are decomposable as $A_q$ and $A_1$ modules.}\label{indecomposablefamily}
%--------------------------------------------------------------------
%--------------------------------------------------------------------

Throughout this section we assume that the characteristic of $\K$ is $p >0$  and $q$ is a root of $1$
of order $p$.

In this section we present a family of finite dimensional 
indecomposable $D$-modules which are decomposable as $A_q$ and 
as $A_1$ modules. First, we present an example.

%--------------------------------------------------------------------
%-------------------------------------------------------------------
\subsubsection{Characteristic of $\K$ is $3$, and $q^3=1$.} \label{counter-example-to-thm1}
%--------------------------------------------------------------------
%-------------------------------------------------------------------

Let $\mathfrak{m}_0=(\tau -1, \sigma -1)$. Then
$\alpha^i (\mathfrak{m}_0) = \{ \mathfrak{m}_0, \mathfrak{m}_1, \mathfrak{m}_2 \}$ with
$\mathfrak{m}_i =(\tau -i-1, \sigma -q^i)$.
Fix $a,b,c,d \in \K^*$.
Let $V = \oplus_{i=0}^2 V_i$ where each 
$V_i $ is a four dimensional $\K_{m_i}$ module with basis
$\{ (i, e_1), (i, e_2), (i, e_3), (i, e_4)  \}$. 
This $D$ module can be visualized as follows:
\begin{center}
\begin{tikzpicture}[scale=1.5]

\node (J) at (0,5.8) {$(0,e_4)$};
\node (K) at (2,5.8) {$(1,e_4)$};
\node (L) at (4,5.8) {$(2,e_4)$};

\filldraw[black](0,6) circle (1pt);
\filldraw[black](2,6) circle (1pt);
\filldraw[black](4,6) circle (1pt);

\draw[->] (0.2,6.1) to [distance=0.3cm](1.8,6.1);
\draw[->] (1.8,5.7) to [distance=-0.3cm](0.2,5.7);
\draw[->] (1.8,5.5) to [distance=-0.3cm](0.2,5.5);
\node (xa) at (1,6.1) {\small{$X=1$}};
\node (y1b) at (1,5.7) {\small{$Y_1=q-1$}};
\node (y1b) at (1,5.4) {\small{$Y=1$}};

\draw[->] (2.2,6.1) to [distance=0.3cm](3.8,6.1);
\node (xb) at (3,6.1) {\small{$X=1$}};
\draw[->] (3.8,5.7) to [distance=-0.3cm](2.2,5.7);
\draw[->] (3.8,5.5) to [distance=-0.3cm](2.2,5.5);
\node (y1b) at (3,5.7) {\small{$Y_1=q^2-1$}};
\node (y1b) at (3,5.4) {\small{$Y=2$}};

%---------------------------------------------------------------------------

\node (G) at (0,3.8) {$(0,e_3)$};
\node (H) at (2,3.8) {$(1,e_3)$};
\node (I) at (4,3.8) {$(2,e_3)$};

\filldraw[black](0,4) circle (1pt);
\filldraw[black](2,4) circle (1pt);
\filldraw[black](4,4) circle (1pt);

\draw[->] (0.2,4.1) to [distance=0.3cm](1.8,4.1);
\draw[->] (1.8,3.7) to [distance=-0.3cm](0.2,3.7);
\draw[->] (1.8,3.5) to [distance=-0.3cm](0.2,3.5);
\node (xa) at (1,4.1) {\small{$X=1$}};
\node (y1b) at (1,3.7) {\small{$Y_1=q-1$}};
\node (y1b) at (1,3.4) {\small{$Y=1$}};

\draw[->] (2.2,4.1) to [distance=0.3cm](3.8,4.1);
\node (xb) at (3,4.1) {\small{$X=1$}};
\draw[->] (3.8,3.7) to [distance=-0.3cm](2.2,3.7);
\draw[->] (3.8,3.5) to [distance=-0.3cm](2.2,3.5);
\node (y1b) at (3,3.7) {\small{$Y_1=q^2-1$}};
\node (y1b) at (3,3.4) {\small{$Y=2$}};

%---------------------------------------------------------------------------

\node (D) at (0,1.8) {$(0,e_2)$};
\node (E) at (2,1.8) {$(1,e_2)$};
\node (F) at (4,1.8) {$(2,e_2)$};

\filldraw[black](0,2) circle (1pt);
\filldraw[black](2,2) circle (1pt);
\filldraw[black](4,2) circle (1pt);

\draw[->] (0.2,2.1) to [distance=0.3cm](1.8,2.1);
\draw[->] (1.8,1.7) to [distance=-0.3cm](0.2,1.7);
\draw[->] (1.8,1.5) to [distance=-0.3cm](0.2,1.5);
\node (xa) at (1,2.1) {\small{$X=1$}};
\node (y1b) at (1,1.7) {\small{$Y_1=q-1$}};
\node (y1b) at (1,1.4) {\small{$Y=1$}};

\draw[->] (2.2,2.1) to [distance=0.3cm](3.8,2.1);
\node (xb) at (3,2.1) {\small{$X=1$}};
\draw[->] (3.8,1.7) to [distance=-0.3cm](2.2,1.7);
\draw[->] (3.8,1.5) to [distance=-0.3cm](2.2,1.5);
\node (y1b) at (3,1.7) {\small{$Y_1=q^2-1$}};
\node (y1b) at (3,1.4) {\small{$Y=2$}};

%---------------------------------------------------------------------------
\node (A) at (0,-0.2) {$(0,e_1)$};
\node (B) at (2,-0.2) {$(1,e_1)$};
\node (C) at (4,-0.2) {$(2,e_1)$};

\filldraw[black](0,0) circle (1pt);
\filldraw[black](2,0) circle (1pt);
\filldraw[black](4,0) circle (1pt);

\draw[->] (0.2,0.1) to [distance=0.3cm](1.8,0.1);
\draw[->] (1.8,-0.3) to [distance=-0.3cm](0.2,-0.3);
\draw[->] (1.8,-0.5) to [distance=-0.3cm](0.2,-0.5);
\node (xa) at (1,0.1) {\small{$X=1$}};
\node (y1b) at (1,-0.3) {\small{$Y_1=q-1$}};
\node (y1b) at (1,-0.6) {\small{$Y=1$}};

\draw[->] (2.2,0.1) to [distance=0.3cm](3.8,0.1);
\node (xb) at (3,0.1) {\small{$X=1$}};
\draw[->] (3.8,-0.3) to [distance=-0.3cm](2.2,-0.3);
\draw[->] (3.8,-0.5) to [distance=-0.3cm](2.2,-0.5);
\node (y1b) at (3,-0.3) {\small{$Y_1=q^2-1$}};
\node (y1b) at (3,-0.6) {\small{$Y=2$}};

%---------------------------------------------------------------------------
\draw[->] (0.2,0.2) to [distance=0.4cm] (3.8,0.2);
\draw[->] (0.2, 0.3) -- (3.8,1.4);
\node (y1) at (3,0.5) {\small{$Y=a$}};
\node (y2) at (2,1) {\small{$Y_1=1$}};
%---------------------------------------------------------------------------
\draw[->] (0.2,2.2) to [distance=0.4cm] (3.8,2.2);
\draw[->] (0.2, 2.3) -- (3.8,3.4);
\node (y3) at (3,2.5) {\small{$Y_1=b$}};
\node (y4) at (2,3) {\small{$Y=1$}};
%---------------------------------------------------------------------------
\draw[->] (0.2,4.2) to [distance=0.4cm] (3.8,4.2);
\draw[->] (0.2, 4.3) -- (3.8,5.4);
\node (y5) at (3,4.5) {\small{$Y=c$}};
\node (y6) at (2,5) {\small{$Y_1=1$}};
%---------------------------------------------------------------------------
\draw[->] (0.2,6.2) to [distance=0.4cm] (3.8,6.2);
\node (y5) at (3,6.5) {\small{$Y_1=d$}};
\draw[->] (-0.5,6) -- (-1,6) -- (-1,-1) -- (4,-1)--(4,-0.5);
\node (y6) at (-1,2.75) {\small{$Y=1$}};

%----------------------------------------------------------------------
\node (w1) at (5,0) {$\leftarrow W_1$};
\node (w2) at (5,2) {$\leftarrow W_2$};
\node (w3) at (5,4) {$\leftarrow W_3$};
\node (w4) at (5,6) {$\leftarrow W_4$};

\end{tikzpicture}

\end{center}

That is, the action of $D$ on $V$ is defined as follows:
\[
X (i, e_j) = \begin{cases}
		(i+1, e_{j}) &\textit{ if } i\neq 2,\\
		0 &\textit{ if } i= 2 ;
		\end{cases}
\]
\[
Y (i, e_j) = \begin{cases}
		i(i-1, e_{j}) &\textit{ if } i\neq 0,\\
		a(2, e_1) &\textit{ if } i=0, j= 1, \\
		(2, e_3) &\textit{ if } i=0, j= 2, \\
		c(2, e_3) &\textit{ if } i=0, j= 3, \\
		(2, e_1)&\textit{ if } i=0, j= 4;
		\end{cases}
\]
\[
Y_1 (i, e_j) = \begin{cases}
		(q^i-1)(i-1, e_{j}) &\textit{ if } i\neq 0,\\
		(2, e_2) &\textit{ if } i=0, j= 1, \\
		b(2, e_2) &\textit{ if } i=0, j= 2, \\
		(2, e_4) &\textit{ if } i=0, j= 3, \\
		d(2, e_4)&\textit{ if } i=0, j= 4.
		\end{cases}
\]
While $V$ is an indecomposable $D$-module, as an $A_1$ module $V$ decomposes as $V =( W_1\oplus W_4) \oplus (W_2\oplus W_3)$
and as an $A_q$-module $V$ decomposes as $V = (W_1\oplus W_2) \oplus (W_3\oplus W_4)$
where each $W_i$ is the three dimensional vector space spanned by $\{ (0,e_i), (1,e_i), (2,e_i) \}$
for $i=0,\ldots ,4$. 

%--------------------------------------------------------------------
%-------------------------------------------------------------------
\subsubsection{Generalization.} \label{counter-example-family}
%--------------------------------------------------------------------
%-------------------------------------------------------------------
The above example \ref{counter-example-to-thm1}
can be generalized. Let $m\geq 4$ be an even natural number
and $a_1,a_2,\ldots ,a_m \in \K^*$. 
Let $\mathfrak{m}_0 = (\tau-1, \sigma -1)$, and
$\mathfrak{m}_i = \alpha^i (\mathfrak{m}_0)$ for $0\leq i\leq p-1$. 
For each  $0\leq i\leq p-1$
let $V_i$ be the $\K_{\mathfrak{m}_i}$ with basis
$(i, e_j)$ for $1\leq j \leq m$. Let $V(m;a_1,a_2,\ldots ,a_m) = \oplus_{i=0}^{p-1}V_i$
and give it $D$-action by 
$
X (i, e_j) = \begin{cases}
		(i+1, e_{j}) &\textit{ if } i\neq p-1,\\
		0 &\textit{ if } i= p-1 ;
		\end{cases}
$
\[
Y (i, e_j) = \begin{cases}
		i(i-1, e_{j}) &\textit{ if } i\neq 0,\\
		a_{2k-1}(p-1, e_j) &\textit{ if } i=0, j= 2k-1, \\
		(p-1, e_{j+1}) &\textit{ if } i=0, j= 2k<m, \\
		(p-1, e_1)&\textit{ if } i=0, j= m;
		\end{cases}
		\]
		\[
Y_1 (i, e_j) = \begin{cases}
		(q^i-1)(i-1, e_{j}) &\textit{ if } i\neq 0,\\
		a_{2k}(p-1, e_j) &\textit{ if } i=0, j= 2k, \\
		(p-1, e_{j+1}) &\textit{ if } i=0, j= 2k-1,
		\end{cases}
\]
where $k$ is a natural number, $1\leq k \leq \frac{m}{2}$. 
This action results in an indecomposable $D$-module structure on 
$V(m;a_1,a_2,\ldots ,a_m)$. 
For each $1\leq j \leq m$, let $W_j$ be the vector space with basis
$\{ (i, e_j)\mid 0\leq i \leq p-1 \}$. 

Note, for each $k, 1\leq k \leq \frac{m}{2}$, the
vector space $W_{2k-1}\oplus W_{2k}$ is an $A_q$ module, and
viewed as an $A_q$-module, $V(m;a_1,a_2,\ldots ,a_m)$ is decomposed
into a direct sum of $A_q$-submodules: 
\[
V(m;a_1,a_2,\ldots ,a_m) = \oplus_{k=1}^{\frac{m}{2}} (W_{2k-1} \oplus W_{2k}).
\]
Similarly, for each $k, 1\leq k < \frac{m}{2}$, the vector space $W_{2k}\oplus W_{2k+1}$ 
is an $A_1$ module; in addition, the vector space $W_{m}\oplus W_1$ is an
$A_1$-module. Further, $V(m;a_1,a_2,\ldots ,a_m)$ is decomposed
into a direct sum of $A_1$-submodules: 
\[
V(m;a_1,a_2,\ldots ,a_m) = \oplus_{k=1}^{\frac{m}{2}-1} (W_{2k} \oplus W_{2k+1}) \oplus
	(W_m \oplus W_1).
\]
We thus have,

\noindent
\textbf{Example 1:} 
For any even natural number $m \geq 4$, and $a_1,a_2,\ldots ,a_m \in \K^*$, 
there is an indecomposable $D$-weight module $V(m;a_1,a_2,\ldots ,a_m)$ which is decomposable 
as an $A_q$ as well as an $A_1$ weight module into $\dfrac{m}{2}$ components.

This example leads us to the contruction of a family of indecomposable $D$ modules as follows:
Let $Y_1, Y$ be two noncommuting variables, $z\in \{ Y_1,Y \}$ and $a\in \K^*$.
As above, let $\mathfrak{m}_0 = (\tau-1, \sigma -1)$, and
$\mathfrak{m}_i = \alpha^i (\mathfrak{m}_0)$ for $0\leq i\leq p-1$. 
Let $W^{z, a} = \oplus_{i=0}^{p-1} \K v_i$ where $v_i$ is of weight $\mathfrak{m}_i$. Then $W^{z,a}$
has a $D$-module structure given by 
$X (v_i) = \begin{cases} v_{i+1} &\textit{ if }i<p-1,\\
				0  &\textit{ if }i=p-1; 
		\end{cases}$
\[
Y (v_i) = \begin{cases} iv_{i-1} &\textit{ if }i>0,\\
				av_{p-1}  &\textit{ if }i=0, \textit{ and }z=Y, \\
				0  &\textit{ if }i=0, \textit{ and }z= Y_1;
		\end{cases}
\]
\[
Y_1 (v_i) = \begin{cases} (q^i-1)v_{i-1} &\textit{ if }i>0,\\
				av_{p-1}  &\textit{ if }i=0, \textit{ and }z=Y_1, \\
				0  &\textit{ if }i=0, \textit{ and }z= Y.
		\end{cases}
\]

Fix $m$ a natural number, $a_1,a_2,\ldots ,a_m \in \K^*$,
and $w = z_1z_2\cdots z_m$ a word of length $m$ where
$z_i \in \{Y_1, Y \}$. Let $V(m; w, a_1,\ldots, a_m) = \oplus_{i=1}^m W^{z_i,a_i}$ as a vector
space. We now connect each component to give $V(m; w, a_1,\ldots, a_m)$ 
an indecomposable $D$ module structure. Namely, $V(m; w, a_1,\ldots, a_m) $
receives action of $X$ from its action on each
component $W^{z_i,a_i}$. The action of $Y$ on $W^{z_i,a_i}$ is also inherited if
$z_i=Y$. If $z_i =Y_1$, then for $i>0$, action of $Y$ on $v_i \in W^{z_i,a_i}$
is inherited; for
$v_0 \in W^{z_i,a_i}$ set 
$Y(v_0) = v_{p-1} \in W^{z_{i+1},a_{i+1}}$ when $1\leq i<m$, and set
$Y(v_0) = v_{p-1} \in W^{z_{1},a_{1}}$ when $i=m$. 
Similarly, the action of $Y_1$ on $W^{z_i,a_i}$ is also inherited if
$z_i=Y_1$. If $z_i =Y$, then for $i>0$, action of $Y_1$ on $v_i \in W^{z_i,a_i}$
is inherited;
for $v_0 \in W^{z_i,a_i}$ set 
$Y_1(v_0) = v_{p-1} \in W^{z_{i+1},a_{i+1}}$ when $1\leq i<m$, and set
$Y_1(v_0) = v_{p-1} \in W^{z_{1},a_{1}}$ when $i=m$. 

For $1\leq i \leq m$ if we view each $W^{z_i,a_i}$ as a vertex, then  $V(m; w, a_1,\ldots, a_m)$
is to be viewed as a directed cycle on $m$ vertices with an edge $Y$ (if $z_i =Y_1$) or 
$Y_1$ (if $z_i=Y$) joining vertex $W^{z_i,a_i}$ to $W^{z_{i+1},a_{i+1}}$ if $i<m$ and joining
$W^{z_m,a_m}$ to $W^{z_1,a_1}$ for $i=m$.  As an examle, for word $w = Y_1YY_1$, and 
$a_1,a_2,a_3\in \K^*$ the cycle would be

\begin{center}
\begin{tikzpicture}[scale=1.5]

\node(A) at (0.5,0){$W^{Y_1,a_1}$};
\node(B) at (0.5,1){$W^{Y,a_2}$};
\node(C) at (0.5,2){$W^{Y_1,a_3}$};
\filldraw[black](0,0) circle (1pt);
\filldraw[black](0,1) circle (1pt);
\filldraw[black](0,2) circle (1pt);

\draw[->] (0,0.1) -- (0,0.9);
\draw[->] (0,1.1) -- (0,1.9);
\draw[->] (0,2) arc (110: 250:1cm);

\node(D) at (0.5,0.5){$Y$};
\node(E) at (0.5,1.5){$Y_1$};
\node(F) at (-1,1){$Y$};

\node(G) at  (4,1){Here, every $W^{z_i,a_i}$ is a $p$-dimensional vector space.};

\end{tikzpicture}

\end{center}

\begin{theorem}\label{thm-family-pm}
Let $p$ denote the characteristic of $\K$ and assume that  $p$ is equal to the order of $q$
as a root of $1$.  For natural number $m$, $a_1,a_2,\ldots ,a_m \in \K^*$, and any
word $w= z_1z_2\cdots z_m$ with $z_i \in \{Y_1, Y \}$  
there is an irreducible $D$-weight module $V(m;w, a_1,a_2,\ldots ,a_m)$ 
of dimension $pm$. We also have:

\begin{itemize}
\item  \textbf{Case $m=1$:} Here $V(m;w, a_1,a_2,\ldots ,a_m)$ is an irreducible $A_1$ and $A_q$ module. 
\item \textbf{Case $m=1,2,3$:} 
	Here $V(m;w, a_1,a_2,\ldots ,a_m)$ is indecomposable as an $A_1$ or an $A_q$ module. 
\item \textbf{Case $m\geq 4$:} 
	If $w=Y_1^m$, then $V(m;w, a_1,a_2,\ldots ,a_m)$ is indecomposable as an $A_1$ module,
	but decomposable as an $A_q$ module. Similarly, if  $w=Y^m$, then $V(m;w, a_1,a_2,\ldots ,a_m)$ is 		         indecomposable as an $A_q$ module,
	but decomposable as an $A_1$ module. 
\item \textbf{Case $m\geq 4$:} If the degree of $w$ in $Y_1$ is $1$ then $V(m;w, a_1,a_2,\ldots ,a_m)$ is
	indecomposable as an $A_q$ module, but decomposable as an $A_1$ module.Similarly, 
	if the degree of $w$ in $Y$ is $1$ then $V(m;w, a_1,a_2,\ldots ,a_m)$ is
	indecomposable as an $A_1$ module, but decomposable as an $A_q$ module.
\item \textbf{Case $m\geq 4$:} If $w$ is of degree at least $2$ in $y$ and at least degree $2$ in $Y_1$, 
	then  $V(m;w, a_1,a_2,\ldots ,a_m)$ is decomposable as an $A_1$ and an $A_q$ module. 
\end{itemize}
\end{theorem}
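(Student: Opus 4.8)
The plan is to first confirm that $V:=V(m;w,a_1,\dots,a_m)$ is a $D$-module, and then to reduce every irreducibility/indecomposability assertion to linear algebra on an $m$-dimensional space. For the first point I would check the defining relations of $D$ directly on each block $W^{z,a}$ in the basis $v_0,\dots,v_{p-1}$; the only non-formal inputs are that $\tau$ and $\sigma$ act on the $v_j$-weight space by the scalars $j+1$ and $q^j$, so that $\tau$, $q\sigma-1$ and $\sigma-1$ vanish exactly where the defining formulas change behaviour, together with Wilson's congruence $(p-1)!\equiv-1\pmod p$, which records the composites $Y^{p-1}(v_{p-1})$ and $Y_1^{p-1}(v_{p-1})$. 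Passing from the $W^{z_i,a_i}$ to $V$ with the re-wired bottom arrows, the re-wiring is weight-compatible ($Y$ and $Y_1$ both carry $V_{\mathfrak{m}_0}$ into $V_{\mathfrak{m}_{p-1}}$ and $\alpha^{-1}(\mathfrak{m}_0)=\mathfrak{m}_{p-1}$), and every relation of $D$ that meets a re-wired vector $v_0^{(i)}$ collapses to $0=0$, because it either post-composes with $X$ (which kills all the targets $v_{p-1}^{(\ast)}$) or contains one of $\tau-1,\sigma-1,q\sigma-1$ (which kill $v_0^{(i)}$); in particular $Y_1(\tau-1)=Y(\sigma-1)$ has both sides zero on the bottom vectors. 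The dimension $pm$ is immediate.

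The key step is a rigidity reduction. Since $X$ restricts to a bijection $V_{\mathfrak{m}_j}\xrightarrow{\ \sim\ }V_{\mathfrak{m}_{j+1}}$ for $0\le j\le p-2$ and kills $V_{\mathfrak{m}_{p-1}}$, while $Y$ and $Y_1$ restricted to $V_{\mathfrak{m}_j}$ are scalar multiples of $X^{-1}$ for $1\le j\le p-1$, any $D$- (resp.\ $A_q$-, resp.\ $A_1$-) submodule $U$ satisfies $U\cap V_{\mathfrak{m}_j}=X^{\,j}(U\cap V_{\mathfrak{m}_0})$, hence is determined by $U_0:=U\cap V_{\mathfrak{m}_0}$. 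Identify $V_{\mathfrak{m}_0}\cong\K^m\cong V_{\mathfrak{m}_{p-1}}$ by $v_0^{(i)}\leftrightarrow e_i\leftrightarrow v_{p-1}^{(i)}$, and let $A,B\in\End_{\K}(\K^m)$ be induced by $Y$ and $Y_1$ as maps $V_{\mathfrak{m}_0}\to V_{\mathfrak{m}_{p-1}}$: $A(e_i)=a_ie_i$ if $z_i=Y$ and $A(e_i)=e_{i+1}$ if $z_i=Y_1$, with $B$ given by the same rule after interchanging $Y$ and $Y_1$ (indices mod $m$); one computes $Y^{p}|_{V_{\mathfrak{m}_0}}=-A$ and $Y^{p-1}Y_1|_{V_{\mathfrak{m}_0}}=-B$, so $U_0$ is automatically $\K\langle A,B\rangle$-stable. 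This yields an inclusion-preserving bijection between $D$-submodules of $V$ and $\K\langle A,B\rangle$-submodules of $\K^m$, and likewise between $A_q$- (resp.\ $A_1$-) submodules of $V$ and $\K[B]$- (resp.\ $\K[A]$-) submodules of $\K^m$. Therefore $V$ is $D$-irreducible $\iff$ $\K^m$ is a simple $\K\langle A,B\rangle$-module, and $V$ is indecomposable as an $A_q$- (resp.\ $A_1$-) module $\iff$ $\K^m$ is indecomposable as a $\K[B]$- (resp.\ $\K[A]$-) module, the number of $A_q$- (resp.\ $A_1$-) summands matching the number of indecomposable $\K[B]$- (resp.\ $\K[A]$-) summands.

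Over $\K=\overline{\K}$ these become routine: a $\K[T]$-module is indecomposable exactly when $T$ is non-derogatory with a single eigenvalue, and the number of indecomposable summands is the number of distinct eigenvalue-blocks. I would first record that $A+B$ is the non-derogatory ("cyclic") matrix $C$ with $C(e_i)=a_ie_i+e_{i+1}$, so $\K^m$ is always a cyclic $\K\langle A,B\rangle$-module; in particular for $m=1$ it is simple over $D$, $A_q$ and $A_1$. For $m\ge 4$ the stated decompositions come from the rational canonical forms of $A$ and $B$ read off from $w$: when $w=Y_1^m$ the operator $A$ is the single $m$-cycle shift while $B=\mathrm{diag}(a_1,\dots,a_m)$ splits $\K^m$ into the generalized eigenspaces of the $a_i$ (and symmetrically for $w=Y^m$); when the $Y$- (resp.\ $Y_1$-) degree of $w$ equals $1$, exactly one of $A,B$ is a single chain while the other breaks into several chains; and when both degrees are $\ge 2$, each of $A,B$ is a direct sum of at least two chains, giving splittings over both $\K[A]$ and $\K[B]$. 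For $D$-irreducibility one shows that no proper subspace of $\K^m$ is stable under both $A$ and $B$, by following how the shift-parts of $A$ and $B$ transport the $e_i$ around the $m$-cycle and rescaling using the $a_i\in\K^{\ast}$.

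The main obstacle is the irreducibility statement: proving $\K^m$ simple over $\K\langle A,B\rangle$ forces one to control all mixed monomials in the two scaled partial-shift operators $A$ and $B$ and to rule out a surviving common invariant subspace, and it is here that the combinatorics of the word $w$ and the nonvanishing of the $a_i$ have to be exploited carefully. By contrast, the $A_q$- and $A_1$-statements each involve only a single cyclic operator and follow from rational canonical form, and the $D$-module verification, while lengthy, is formal once the weight-compatibility of the re-wiring and Wilson's congruence are in hand.
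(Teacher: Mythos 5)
The paper states Theorem \ref{thm-family-pm} with no proof at all, so there is no argument of the authors to compare yours against; you are supplying one from scratch. Your central device --- that $X$ is bijective between consecutive weight spaces except out of $V_{\mathfrak{m}_{p-1}}$, that $Y$ and $Y_1$ act as invertible scalars times $X^{-1}$ away from $V_{\mathfrak{m}_0}$, and hence that every $D$- (resp.\ $A_1$-, $A_q$-) submodule is governed by a $\K\langle A,B\rangle$- (resp.\ $\K[A]$-, $\K[B]$-) stable subspace of $V_{\mathfrak{m}_0}\cong\K^m$, where $A,B$ are the monodromies of $Y,Y_1$ --- is a genuinely good reduction and, granting the setup, is correct and more than the paper offers.

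The difficulty is that, carried out honestly, this reduction does not prove the theorem; it refutes parts of it, and your sketch asserts the stated conclusions without confronting this. First, your own criterion (a $\K[T]$-module is indecomposable iff $T$ is non-derogatory with a single eigenvalue) applied to $w=Y_1^m$ gives $A=\pm P$ with $P$ the $m$-cycle permutation; for $m\ge 4$ with $p\nmid m$ this has $m$ distinct eigenvalues, so $\K^m$ splits over $\K[A]$ and $V$ is \emph{decomposable} as an $A_1$-module, contrary to the third bullet. Likewise, when $\deg_{Y_1}w=1$ the operator $B$ is a single chain terminating in a loop of nonzero weight, with minimal polynomial $T^{m-1}(T-c)$, $c\neq 0$; its two primary components split $\K^m$, so $V$ is decomposable as an $A_q$-module, contrary to the fourth bullet. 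Second, the irreducibility claim is exactly the step you defer (``the main obstacle''), and it is false in general: for $w=Y^m$ with all $a_i$ equal, $A$ is scalar and any $B$-eigenline is a common invariant subspace; conversely, in cases where no common invariant subspace exists (e.g.\ $m=2$, $w=Y_1Y$, $a_1a_2\neq 1$) you would be producing an irreducible $D$-module with $m$-dimensional weight spaces, contradicting Proposition \ref{quiver} of the same paper. (The section title and surrounding text indicate ``irreducible'' should read ``indecomposable,'' but even that fails in the examples just given.) Finally, the standing hypothesis is vacuous: a field of characteristic $p$ has no element of multiplicative order $p$, so $q=1$ and every coefficient $q^i-1$ vanishes; in particular your assertion that $Y_1$ restricted to $V_{\mathfrak{m}_j}$, $1\le j\le p-1$, is a nonzero multiple of $X^{-1}$ --- which the $A_q$-side of your submodule correspondence requires --- cannot hold. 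A correct write-up would have to address all three points rather than reproduce the theorem's conclusions.
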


While we do not have any general statements of indecomposable finite dimensional $D$-modules
we can say the following:
\begin{theorem}\label{thm-equidimension-p=|q|}
Let $V = \oplus_{\mathfrak{m} \in \omega} V_{\mathfrak{m}}$ 
be a finite dimensional $D$ weight module with support contained in orbit $\omega$. 
Then $dim (V_{\mathfrak{m}}) = dim(V_{\mathfrak{n}})$ for any $\mathfrak{m}, \mathfrak{n}
\in \omega$.
\end{theorem}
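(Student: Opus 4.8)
The plan is to show that, away from at most one exceptional weight, the operator $X$ restricts to an \emph{isomorphism} between consecutive weight spaces of the orbit, and then to read off the equality of dimensions by going once around the (finite) orbit. For the set-up, under the standing hypotheses of this section $\omega$ is finite; write $\omega=\{\mathfrak m_0,\dots,\mathfrak m_{N-1}\}$ with $\mathfrak m_{i+1}=\alpha(\mathfrak m_i)$ (indices modulo $N$), and set $d_i=\dim_{\K}V_{\mathfrak m_i}$. Since $\K$ is algebraically closed, $R/\mathfrak m_i\cong\K$, so $R$ acts on each $V_{\mathfrak m_i}$ by scalars; write $\bar\tau_i,\bar\sigma_i\in\K$ for the scalars by which $\tau,\sigma$ act on $V_{\mathfrak m_i}$, and note $\bar\tau_{i+1}=\bar\tau_i+1$, $\bar\sigma_{i+1}=q\bar\sigma_i$, $\bar\sigma_i\neq0$. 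The defining relations of $D$ say that on the appropriate weight spaces $YX=\tau$, $XY=\alpha(\tau)=\tau-1$, $Y_1X=q\sigma-1$, $XY_1=\alpha(q\sigma-1)=\sigma-1$ act as scalars.

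The key step is: $X\colon V_{\mathfrak m_i}\to V_{\mathfrak m_{i+1}}$ is an isomorphism unless $\bar\tau_i=0$ \emph{and} $q\bar\sigma_i-1=0$ (call such a weight a \emph{double break}). For injectivity, $YX$ acts on $V_{\mathfrak m_i}$ as $\bar\tau_i$ and $Y_1X$ as $q\bar\sigma_i-1$, so if one of these scalars is nonzero then $X|_{V_{\mathfrak m_i}}$ is injective. For surjectivity, observe that $XY=\tau-1$ acts on $V_{\mathfrak m_{i+1}}$ as $\bar\tau_{i+1}-1=\bar\tau_i$ and $XY_1=\sigma-1$ acts on $V_{\mathfrak m_{i+1}}$ as $\bar\sigma_{i+1}-1=q\bar\sigma_i-1$, i.e.\ exactly the same two scalars; so if one of them is nonzero then every $v\in V_{\mathfrak m_{i+1}}$ is a scalar multiple of $X(Yv)$ or of $X(Y_1v)$ with $Yv,Y_1v\in V_{\mathfrak m_i}$, and $X|_{V_{\mathfrak m_i}}$ is onto. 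Hence both injectivity and surjectivity hold as soon as $\mathfrak m_i$ is not a double break.

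It remains to see that $\omega$ has at most one double break and to conclude. The scalars $\bar\tau_0,\bar\tau_0+1,\dots,\bar\tau_0+N-1$ are $N$ distinct elements of $\K$ (here $N=|\omega|$ equals the order of $q$, which under the hypotheses of this section equals $\operatorname{char}\K$), so $\bar\tau_i=0$ for at most one $i$; likewise $\bar\sigma_i=q^i\bar\sigma_0$ takes $N$ distinct values, so $q\bar\sigma_i-1=0$ for at most one $i$; hence there is at most one double break. If there is none, every $X\colon V_{\mathfrak m_i}\to V_{\mathfrak m_{i+1}}$ is an isomorphism and all $d_i$ are equal. If there is exactly one, relabel so that it is $\mathfrak m_0$; then $X\colon V_{\mathfrak m_i}\to V_{\mathfrak m_{i+1}}$ is an isomorphism for every $i\in\{1,\dots,N-1\}$, whence $d_1=d_2=\cdots=d_{N-1}=d_0$, the last equality coming from $i=N-1$ since $\mathfrak m_{N-1}$ is not the double break. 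In either case $\dim V_{\mathfrak m}$ is independent of $\mathfrak m\in\omega$, as claimed.

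I expect no serious obstacle: the proof is short once one sees the mechanism. The point that must be got right — and is really the crux — is that injectivity and surjectivity of $X|_{V_{\mathfrak m_i}}$ are controlled by the \emph{same} condition (that $\mathfrak m_i$ is not a double break). This is what ensures that even the single weight space sitting at a double break cannot break the chain of equalities: although $X$ \emph{out of} it may fail to be an isomorphism, $X$ \emph{into} it (from $\mathfrak m_{N-1}$) still is, so its dimension is pinned down from the far side of the cycle. One should also dispatch the harmless degenerate cases in which some $V_{\mathfrak m_i}=0$ — the cyclic chain of isomorphisms then forces $V=0$ — and check the elementary bookkeeping $\bar\tau_{i+1}=\bar\tau_i+1$, $\bar\sigma_{i+1}=q\bar\sigma_i$ and the count $|\omega|=N$ against the orbit conventions fixed in the Preliminaries.
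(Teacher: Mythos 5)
Your proof is correct and follows essentially the same route as the paper's: both arguments show that $X$ restricts to an isomorphism between consecutive weight spaces except possibly at one exceptional weight (the paper phrases this via the uniqueness of the $A_q$- and $A_1$-breaks in $\omega$, you via a single ``double break'' where both $\tau$ and $q\sigma-1$ vanish), and then close the chain of equal dimensions around the circular orbit of length $p$. Your symmetric use of the two relations $XY=\tau-1$ and $XY_1=\sigma-1$ to get surjectivity from the same scalars that control injectivity is a slightly tidier packaging of the paper's case analysis, but the mechanism is identical.
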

\begin{proof}
Since
the characteristic of $\K$ is $p$ and so is the order of $q$, we see that $\omega$ 
is a circular orbit of length $p$. 
Further, $\omega$ has at most one $A_q$-break and has at most one $A_1$-break. 

Suppose that $\omega$ does not have an $A_q$-break or an $A_1$-break.
Let $\mathfrak{m} \in \omega$ be in the support of $V$, and consider
$X: V_{\mathfrak{m}} \to V_{\alpha (\mathfrak{m})}$.  Suppose $v\in  V_{\mathfrak{m}}$
is in the kernel of $X$.  That is, $YX (v) = \tau (v) =0$ and $Y_1X(v) = (q\sigma -1)(v)=0$. 
That is, $\mathfrak{m} = (\tau, \sigma -\frac{1}{q})$ which implies that 
$\mathfrak{m}$ is an $A_q$ as well as an $A_1$ break.  That is, $X$ is injective for every
$\mathfrak{m}$. Since $\omega$ is a circular orbit, 
$X: V_{\mathfrak{m}} \to V_{\alpha (\mathfrak{m})}$ 
is an isomorphism of vectorspaces for every
$\mathfrak{m}$.

Now suppose that $\omega$ has an $A_q$-break or an $A_1$-break. Without loss of generality,
suppose that $\omega$ has an $A_q$-break, and since it is unique, let us name it $\mathfrak{m}_{p-1}$. 
Let $\omega = \{ \mathfrak{m}_0, \mathfrak{m}_1, \ldots, \mathfrak{m}_{p-1} \}$ 
where $\mathfrak{m}_i = \alpha^i (\mathfrak{m}_0)$. 
As $\mathfrak{m}_i$ is not a break for any $i<p-1$, then 
by the argument in the preceding paragraph, $X: V_{\mathfrak{m}_i} \to 
V_{\mathfrak{m}_{i+1}}$ is injective for every $i<p-1$. 

Suppose for some $i<p-1$, $X: V_{\mathfrak{m}_i} \to 
V_{\mathfrak{m}_{i+1}}$ is not surjective. Then there is a $v\in V_{\mathfrak{m}_{i+1}}$
not in $X(V_{\mathfrak{m}_{i}})$. Now, $Y_1(v) \in V_{\mathfrak{m}_{i}}$. Note,
$XY_1(v) = (\sigma -1)(v)$. If $(\sigma -1)(v) \neq 0$, then $v\in X(V_{\mathfrak{m}_{i}})$
contradicting the choice of $v$. Hence, $(\sigma -1)(v) =0$.  In other words, $v\in \mathfrak{m}_0$. 
In other words, $X: V_{\mathfrak{m}_i} \to 
V_{\mathfrak{m}_{i+1}}$ is surjective for every $i, 0\leq i <p-1$. 
 We have thus proved the theorem.
\end{proof}
\begin{remark}
Note that by Proposition \ref{quiver}, every weight space of an irreducible $D$-module is of dimension $1$
over an algebraically closed field, irrespective of its characteristic. 
Theorem \ref{thm-equidimension-p=|q|} seems to lead to a generalization of Proposition \ref{quiver} for $D$-modules not necessarily irreducible. Yet,
Theorem \ref{thm-equidimension-p=|q|} is not true for weight $D$-modules when the characteristic $\K$
is not equal to the order of $q$. For instance, suppose characteristic of $\K$ is $3$ and $q^2=1$.
Then, a cyclic orbit is of length $6$.  Let $\mathfrak{m}_1 = (\tau -1, \sigma -1)$
and $\omega = \{ \mathfrak{m}_i=\alpha^i (\mathfrak{m}_1) \mid 0\leq i \leq 5 \}$. 
Let $V_{\mathfrak{m}_i}$ be one-dimensional with basis $\{ v_i \}$ for $1\leq i \leq 3$ 
and zero-dimensional otherwise. Assign it a $D$-module structure by

\begin{center}
\begin{tabular}{lllll}
$\tau (v_1) = v_1$, &&$\sigma (v_1)=v_1$,&&$X(v_1) = v_2$,\\
$\tau (v_2) = 2v_2$, &&$\sigma (v_2)=qv_2$,&&$X(v_2) = v_3$, \\
$\tau (v_3) = 0$, &&$\sigma (v_3)=v_3$,&&$X(v_3) = 0$,
\end{tabular}
\end{center}

\begin{center}
\begin{tabular}{lll}
$Y(v_1) =0$, &&$Y_1(v_1)=0$,\\
$Y(v_2) =v_1$, &&$Y_1(v_2)=(q-1)v_1$,\\
$Y(v_3) =2v_2$, &&$Y_1(v_3)=0$.
\end{tabular}
\end{center}

\end{remark}

%----------------------------------------------------------------------------------------------------------------------
%----------------------------------------------------------------------------------------------------------------------
\subsection{Isomorphisms among various families of $D$-irreducible weight modules.} \label{D-irred-iso-fam}
%----------------------------------------------------------------------------------------------------------------------
%----------------------------------------------------------------------------------------------------------------------
The following isomorphisms are noted:
\begin{enumerate}
\item When $b \in \K^* \setminus \{ q^i\}_{i\in \Z}$ and $a\in \K \setminus \Z$, then
	$V_q(\omega ,b,a) \cong V_1(\omega ,a,b)$. These modules are described in section
	\ref{DGO-indecomposables-of-Aq-char0}, case \ref{qVomega}, and section
	\ref{DGO-indecomposables-of-A1-char0}, case \ref{Vomega}, respectively.
\item  When $q$ is not a root of unity, $b = q^k$ for some $k\in Z$ and $a\in \K \setminus \Z$, then
	$ V_1(\omega ,a,b) \cong V_q(\omega ,J,J'a)$ 
	 where the two  are described in section
	 \ref{DGO-indecomposables-of-A1-char0}, case \ref{Vomega}, and section
	\ref{DGO-indecomposables-of-Aq-char0}, case \ref{qVJJ'}, subcase (1) respectively.
\item   When $q$ is a root of unity, $b = q^k$ for some $k\in Z$ and $a\in \K \setminus \Z$, then
	$ V_1(\omega ,a,b) \cong V_q(\omega ,J,J)$ 
	 where the two  are described in section
	 \ref{DGO-indecomposables-of-A1-char0}, case \ref{Vomega}, and section
	\ref{DGO-indecomposables-of-Aq-char0}, case \ref{qpVJJ'}, case (1) respectively.
\item When $b \in \K^* \setminus \{ q^i\}_{i\in \Z}$ and $a\in  \Z$, then
	$V_q(\omega ,b,a) \cong V_1(\omega ,J,J', b)$. These modules are described in section
	\ref{DGO-indecomposables-of-Aq-char0}, case \ref{qVomega}, and section
	\ref{DGO-indecomposables-of-A1-char0}, case \ref{VJJ'}, subcase (1) respectively.
\item When $a\notin \Z_p$ and $q\notin \{ q^i \}_{i\in \Z}$, we have 
	$V_q(\omega ,f,b, a) \cong V_1(\omega ,f,a, b)$.
\item  When $w=\epsilon$, the emtpy word,  $V_q(\omega, j, \epsilon)$ is isomorphic to
	$V_1(\omega ,j, \epsilon)$ as $D$-modules; refer to sections \ref{qVomegajw} and \ref{1Vomegajw}
	for details.

\end{enumerate}

%----------------------------------------------------------------------------------------------------------------------
%----------------------------------------------------------------------------------------------------------------------

%------------------------------------------------------------------------------------------------------
%------------------------------------------------------------------------------------------------------
\section*{Acknowledgements}
V.F. is supported in part by the CNPq grant (301320/2013-6) and by the
Fapesp grant (2010/50347-9).
U.I. gratefully acknowledges hospitality from
 Insitute of Mathematics and Statistics, University of S\~ ao Paulo. 
She is supported in part by the Fapesp grant (2013/13970-8).

%------------------------------------------------------------------------------------------------------
%------------------------------------------------------------------------------------------------------

\bibliographystyle{amsplain}

\end{document}